\journal{Elsevier}
\newcommand{\domD}{\mathscr{D}}
\renewcommand{\pi}{\piup}
\renewcommand{\Re}{\operatorname{Re}}
\renewcommand{\Im}{\operatorname{Im}}
\DeclareMathOperator{\diag}{diag}
\DeclareMathOperator{\arsinh}{arsinh}
\DeclareMathOperator{\Si}{Si}
\DeclareMathOperator{\OO}{O}
\DeclareMathOperator{\E}{e}
\DeclareMathOperator{\I}{i}
\newcommand{\D}{\,\mathrm{d}}
\newcommand{\mathbd}{\boldsymbol}
\newdefinition{definition}{Definition}[section]
\newtheorem{theorem}{Theorem}[section]
\newtheorem{lemma}[theorem]{Lemma}
\newtheorem{proposition}[theorem]{Proposition}
\newtheorem{remark}{Remark}[section]
\newproof{proof}{Proof}
\numberwithin{equation}{section}
\begin{document}

\begin{frontmatter}

\title{Error analyses of Sinc-collocation methods for exponential decay initial
value problems\tnoteref{mytitlenote}}
\tnotetext[mytitlenote]{This work was partially supported by JSPS
Grant-in-Aid for Scientific Research (C) JP23K03218.}

\author{Tomoaki Okayama}
\address{Graduate School of Information Sciences, Hiroshima City University,
3-4-1, Ozuka-higashi, Asaminami-ku, Hiroshima 731-3194, Japan}
\ead{okayama@hiroshima-cu.ac.jp}

\author{Ryota Hara}
\address{PC Assist Co., Ltd.,
Gojinsha hiroshima kamiyamachi building 5F,
11-10, Motomachi, Naka-ku, Hiroshima 730-0011, Japan}

\author{Shun'ichi Goto}
\address{IZUMI Co., Ltd.
3-3-1 Futabanosato, Higashi-ku, Hiroshima 732-0057, Japan}




\begin{abstract}
Nurmuhammad et al.\ developed the
Sinc-Nystr\"{o}m methods for initial value problems
in which the solutions exhibit exponential decay end behavior.
In these methods, the Single-Exponential (SE) transformation
or the Double-Exponential (DE) transformation
is combined with the Sinc approximation.
Hara and Okayama improved on these transformations
to attain a better convergence rate,
which was later supported by theoretical error analyses.
However, these methods have a computational drawback
owing to the inclusion of a special function in the basis functions.
To address this issue, Okayama and Hara
proposed Sinc-collocation methods, which do not include any
special function in the basis functions.
This study conducts error analyses of these methods.
\end{abstract}

\begin{keyword}
Ordinary differential equations\sep Initial value problems\sep
Volterra integral equations\sep
Sinc numerical methods\sep SE transformation
\sep DE transformation
\MSC[2010] 65L04 \sep 65L05 \sep 65R20 \sep 65D30
\end{keyword}

\end{frontmatter}

\section{Introduction and summary}

This study focuses on numerical solutions
for systems of initial value problems of the following form:
\begin{equation}
\begin{cases}
 \mathbd{y}'(t) = K(t)\mathbd{y}(t) + \mathbd{g}(t),\quad t\geq 0,\\
 \mathbd{y}(0) = \mathbd{r},
\end{cases}
\label{eq:ODE}
\end{equation}
where $K(t)$ is an $m\times m$ matrix whose $(i, j)$ elements are
$k_{ij}(t)$, and $\mathbd{y}(t)$, $\mathbd{g}(t)$, and $\mathbd{r}$ are
$m$-dimensional vectors. In this study, the solution $\mathbd{y}(t)$ is
assumed to decay exponentially as $t\to\infty$.
For such a case,
Nurmuhammad et al.~\cite{Nurmuhammad}
proposed the Sinc-Nystr\"{o}m methods
by means of the Sinc indefinite integration
and two types of variable transformations:
Single-Exponential (SE) transformation and
the Double-Exponential (DE) transformation.
In their numerical experiments,
these methods exhibited exponential convergence with respect to the number
of sampling points $l$,
which is much faster than polynomial convergence,
such as $\OO(l^{-4})$ (e.g.,\ classic Runge--Kutta or Gauss--Legendre method).
It should be noted that
such a fast convergence was also observed for a stiff problem.

The convergence was further improved by
Hara and Okayama~\cite{HaraOkayamaP}.
They improved the SE and DE transformations in the Sinc-Nystr\"{o}m methods,
and showed their superiority by numerical experiments.
They~\cite{HaraOkayama} later theoretically showed that
the Sinc-Nystr\"{o}m method combined with the SE transformation
(called SE-Sinc-Nystr\"{o}m method) can attain $\OO(\exp(-c\sqrt{l}))$,
and the Sinc-Nystr\"{o}m method combined with the DE transformation
(called DE-Sinc-Nystr\"{o}m method) can attain $\OO(\exp(-cl/\log l))$.
These convergence rates were derived
with the assumptions that $\|A_{lm}^{-1}\|_{\infty}$ and $\|B_{lm}^{-1}\|_{\infty}$ do not
diverge exponentially with respect to $l$,
where $A_{lm}$ and $B_{lm}$
denote the coefficient matrices of the system of linear equations
for the SE- and DE-Sinc-Nystr\"{o}m methods, respectively.
These assumptions seem reasonable, given their numerical observations.

However, the Sinc-Nystr\"{o}m methods have a disadvantage
in terms of computational cost.
This is because the basis functions of these methods include the sine integral,
which is a special function
defined by
\begin{equation}
 \Si(x) = \int_0^x\frac{\sin t}{t}\D t.
\label{eq:Si}
\end{equation}
To eliminate this disadvantage,
Okayama and Hara~\cite{OkayamaHara}
proposed Sinc-collocation methods so that
basis functions do not include any special function.
Their methods were derived by means of
the Sinc approximation with a boundary treatment,
combined with the SE/DE transformations.
We note that such methods were already derived for
initial value problems over a finite interval~\cite{okayama18:_theor_sinc},
but not derived for the present case,
i.e., initial value problems with exponential decay end behavior
over the semi-infinite interval $(0,\infty)$.
Based on the results of numerical experiments,
they reported that the Sinc-collocation methods
achieved the same precision as the Sinc-Nystr\"{o}m methods,
but with significantly lower computational costs.

The objective of this study is to provide a theoretical explanation
for their report.
The key to this project is the error analysis
of the Sinc approximation with a boundary treatment.
In the case where the SE transformation is used with the Sinc approximation,
the error analysis was provided~\cite{OkayamaHamada},
whereas that was not provided
in the case where the DE transformation is used.
Therefore, we present the error analysis of the latter case.
Then, with the aid of the error analyses of the Sinc-Nystr\"{o}m methods,
we analyze the error of the Sinc-collocation methods.
As a result, it is shown that
the convergence rate of the Sinc-collocation methods
is slightly worse than that of the Sinc-Nystr\"{o}m methods.
This is not a negative but positive result for the Sinc-collocation methods;
we can conclude that
without using any special function,
the Sinc-collocation methods can achieve almost the
same convergence rate as the Sinc-Nystr\"{o}m methods.

The remainder of this paper is organized as follows.
As a preliminary,
we describe the Sinc approximation and the Sinc indefinite integration
and their convergence theorems in Sect.~\ref{sec:preliminary}.
The Sinc-Nystr\"{o}m methods and their error analyses are described
in Sect.~\ref{sec:SNM}.
The Sinc-collocation methods and their error analyses,
the main results of this study, are described
in Sect.~\ref{sec:SCM}.
The proofs of the new theorems in Sect.~\ref{sec:preliminary}
are provided in Sect.~\ref{sec:proofs-DE-Sinc}.
The proofs of the new theorems in Sect.~\ref{sec:SCM}
are provided in Sect.~\ref{sec:proofs}.

\section{Sinc approximation and Sinc indefinite integration}
\label{sec:preliminary}

\subsection{Sinc approximation}

The Sinc approximation is a function approximation formula
over the real axis $\mathbb{R}$, which is expressed as
\begin{equation}
 F(x)\approx \sum_{j=-M}^N F(jh)S(j,h)(x),\quad x\in\mathbb{R},
\label{eq:Sinc-approximation}
\end{equation}
where $S(k,h)(x)$ is the so-called ``Sinc function'' defined by
\[
 S(j,h)(x) = \frac{\sin[\pi(x - jh)/h]}{\pi(x - jh)/h},
\]
and $h$, $M$, and $N$ are suitably selected depending on
a given positive integer $n$.

\subsubsection{SE-Sinc approximation and its convergence theorem}

To apply the Sinc approximation~\eqref{eq:Sinc-approximation},
$F$ should be defined on the entire real axis $\mathbb{R}$.
If the function to be approximated, denoted by $f(t)$, is
defined for $t\geq 0$,
we should employ a variable transformation that maps $\mathbb{R}$ onto
$(0,\infty)$.
Especially in the case where $f(t)$ decays exponentially as $t\to\infty$,
such as $f(t)=\sqrt{t}\E^{-t}$,
the following variable transformation
\[
 t = \psi(x) = \log(1 + \E^x)
\]
was proposed~\cite{OkaShinKatsu}.
We refer to this transformation as the Single-Exponential (SE) transformation.
With the SE transformation, putting $F(x)=f(\psi(x))$,
we can apply~\eqref{eq:Sinc-approximation} as
\[
 f(\psi(x)) \approx\sum_{j=-M}^N f(\psi(jh))S(j,h)(x),\quad x\in\mathbb{R},
\]
which is equivalent to
\begin{equation}
 f(t)\approx\sum_{j=-M}^N f(\psi(jh))S(j,h)(\psi^{-1}(t)),\quad t\in (0,\infty).
\label{eq:SE-Sinc-approximation}
\end{equation}
We refer to this approximation as the SE-Sinc approximation.

For efficient approximation through~\eqref{eq:Sinc-approximation},
$F$ should be analytic on a strip domain
\[
 \domD_d = \{\zeta\in\mathbb{C}: |\Im\zeta|< d\}
\]
for a positive constant $d$. Therefore,
for efficient approximation through~\eqref{eq:SE-Sinc-approximation},
$f$ should be analytic on a translated domain
\[
 \psi(\domD_d) = \{z=\psi(\zeta):\zeta\in\domD_d\}.
\]
Actually, the convergence of the SE-Sinc approximation was
analyzed as follows.

%

\begin{theorem}[Okayama et al.~{\cite[Theorem~2.2]{OkaShinKatsu}}]
\label{thm:SE-Sinc}
Assume that $f$ is analytic in $\psi(\domD_d)$ with $0<d<\pi$.
Furthermore, assume that
there exist positive constants $C_{\ddagger}$, $\alpha$, and $\beta$
such that
\begin{equation}
|f(z)|\leq C_{\ddagger} \left|\frac{z}{1+z}\right|^{\alpha} |\E^{-z}|^{\beta}
\label{eq:f-bound}
\end{equation}
holds for all $z\in\psi(\domD_d)$.
Let $\mu=\min\{\alpha,\beta\}$, let $M$ and $N$ be defined as
\begin{equation}
\left\{
\begin{aligned}
M&=n,& N&=\left\lceil\frac{\alpha}{\beta} n\right\rceil
 &&(\text{if}\,\,\,\mu = \alpha),\\
N&=n,& M&=\left\lceil\frac{\beta}{\alpha} n\right\rceil
 &&(\text{if}\,\,\,\mu = \beta),
\end{aligned}
\label{eq:def-MN}
\right.
\end{equation}
and let $h$ be defined as
\begin{equation}
h = \sqrt{\frac{\pi d}{\mu n}}.
\label{eq:def-h}
\end{equation}
Then, there exists a positive constant $C$ independent of $n$ such that
\[
 \sup_{t\in(0,\infty)}\left|
  f(t) - \sum_{j=-M}^N f(\psi(jh))S(j,h)(\psi^{-1}(t))
 \right|
\leq C \sqrt{n} \E^{-\sqrt{\pi d \mu n}}.
\]
\end{theorem}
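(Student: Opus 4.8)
The plan is to transfer the estimate to the real axis through the substitution $F(x)=f(\psi(x))$, split the approximation error into a \emph{discretization} part and a \emph{truncation} part, bound each on its own, and finally balance the resulting exponential factors by inserting the prescribed $M$, $N$, and $h$. Concretely, $\psi$ maps the strip $\domD_d$ analytically onto $\psi(\domD_d)$, so that the analyticity of $f$ on $\psi(\domD_d)$ makes $F=f\circ\psi$ analytic on $\domD_d$; since $S(j,h)(\psi^{-1}(t))=S(j,h)(x)$ with $x=\psi^{-1}(t)$, the supremum over $t\in(0,\infty)$ equals the supremum over $x\in\mathbb{R}$ of $\bigl|F(x)-\sum_{j=-M}^{N}F(jh)S(j,h)(x)\bigr|$. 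I would then write this quantity as the sum of the discretization error $\bigl|F(x)-\sum_{j=-\infty}^{\infty}F(jh)S(j,h)(x)\bigr|$ and the truncation error $\bigl|\sum_{j<-M}F(jh)S(j,h)(x)+\sum_{j>N}F(jh)S(j,h)(x)\bigr|$, and estimate the two pieces separately.

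For the discretization error I would invoke the classical Sinc estimate (Stenger): if $F$ lies in the Hardy-type space $\mathbf{H}^1(\domD_d)$ of functions analytic on $\domD_d$ with finite boundary $L^1$-norm, then the full-series error is bounded by a constant multiple of $\E^{-\pi d/h}$, the constant being proportional to $\|F\|_{\mathbf{H}^1(\domD_d)}$. The work here is to verify this membership and, crucially, that $\|F\|_{\mathbf{H}^1(\domD_d)}$ is bounded \emph{independently of $n$}: parametrizing the boundary lines $\Im\zeta=\pm d$ by $\zeta=\xi\pm\I d$ and applying the hypothesis~\eqref{eq:f-bound} to $F(\xi\pm\I d)=f(\psi(\xi\pm\I d))$, the two factors $|z/(1+z)|^{\alpha}$ and $|\E^{-z}|^{\beta}$ supply integrable decay as $\xi\to\pm\infty$, so that $\int_{-\infty}^{\infty}|F(\xi\pm\I d)|\D\xi$ is a finite constant not depending on $n$.

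For the truncation error I would first convert~\eqref{eq:f-bound} into real-axis decay of $F$. Since $\psi(x)\sim\E^{x}$ as $x\to-\infty$ and $\psi(x)\sim x$ as $x\to+\infty$, the factor $|z/(1+z)|^{\alpha}$ governs the left end and $|\E^{-z}|^{\beta}$ the right end, giving $|F(x)|\leq C\E^{\alpha x}$ for $x\leq0$ and $|F(x)|\leq C\E^{-\beta x}$ for $x\geq0$. Using $|S(j,h)(x)|\leq1$ and summing the resulting geometric series, the left and right tails are bounded by constant multiples of $\E^{-\alpha Mh}/(1-\E^{-\alpha h})$ and $\E^{-\beta Nh}/(1-\E^{-\beta h})$, respectively. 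As $h\to0$ the denominators behave like $\alpha h$ and $\beta h$, which is exactly where the polynomial factor $1/h=\sqrt{\mu n/(\pi d)}=\OO(\sqrt{n})$ enters.

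Finally I would balance the three contributions. Taking $\mu=\alpha$ (the case $\mu=\beta$ being symmetric), the choice $M=n$ and $N=\lceil(\alpha/\beta)n\rceil$ forces $\beta Nh\geq\alpha nh=\mu nh$, so both tails are $\OO(\sqrt{n}\,\E^{-\mu nh})$, while the discretization term is $\OO(\E^{-\pi d/h})$. Substituting $h=\sqrt{\pi d/(\mu n)}$ makes $\mu nh=\pi d/h=\sqrt{\pi d\mu n}$, so all three exponentials collapse to $\E^{-\sqrt{\pi d\mu n}}$; absorbing the $\OO(\sqrt{n})$ prefactor then yields the claimed bound $C\sqrt{n}\,\E^{-\sqrt{\pi d\mu n}}$. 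The main obstacle is the $\mathbf{H}^1$ step: one must track how the two-factor bound~\eqref{eq:f-bound} behaves under $\psi$ simultaneously near both ends \emph{and} along the shifted contours $\Im\zeta=\pm d$, and confirm that the boundary integrals converge to an $n$-independent constant. Once that uniform bound is secured, the truncation estimate and the final balancing are routine.
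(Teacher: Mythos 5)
Your proposal is correct and takes the standard route for this theorem: transfer to the real line via $F=f\circ\psi$, split into discretization and truncation errors, bound the former by the $\mathbf{H}^1$-type estimate $\OO(\E^{-\pi d/h})$ and the latter via the real-axis decay $|F(x)|\leq C\E^{\alpha x}$ ($x\leq 0$), $|F(x)|\leq C\E^{-\beta x}$ ($x\geq 0$), and balance the exponents with the prescribed $M$, $N$, $h$ (the $\sqrt{n}$ prefactor coming from the $1/h$ in the tails, as you note). The paper itself does not prove Theorem~\ref{thm:SE-Sinc} but cites it from Okayama et al.; your argument matches the cited source and is exactly the strategy this paper uses for the DE analogue (Theorem~\ref{thm:DE-Sinc}, via Lemmas~\ref{lem:discretization} and~\ref{lem:truncation}).
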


\subsubsection{SE-Sinc approximation with a boundary treatment and its convergence theorem}

According to Theorem~\ref{thm:SE-Sinc},
$f$ should satisfy~\eqref{eq:f-bound},
which requires $f$ to be zero at the boundary of $(0,\infty)$.
Here, let $\tilde{f}$ be a function with general boundary values
\begin{equation}
\left\{
\begin{aligned}
\lim_{t\to\infty} \tilde{f}(t) &= p,\\
\lim_{t\to 0} \tilde{f}(t)     &= q.
\end{aligned}
\right.
\label{eq:def-p-and-q}
\end{equation}
Okayama and Hamada~\cite{OkayamaHamada} considered
the following function
\begin{equation}
 f(t) = \tilde{f}(t) - \frac{q + p(\E^t - 1)}{\E^t},
\label{eq:Hamada-func}
\end{equation}
which is zero at the boundary of $(0,\infty)$.
Then, they considered application of~\eqref{eq:SE-Sinc-approximation},
which is equivalent to
\begin{equation}
 \tilde{f}(t)\approx
\frac{q + p(\E^t - 1)}{\E^t}
+\sum_{j=-M}^N\left(\tilde{f}(\psi(jh)) - \frac{q + p\E^{jh}}{1 + \E^{jh}}\right)
S(j,h)(\psi^{-1}(t)),\quad t\in(0,\infty).
\label{eq:general-SE-Sinc}
\end{equation}
We refer to this approximation as the SE-Sinc approximation with a
boundary treatment.
Its convergence was analyzed as follows.

\begin{theorem}[Okayama and Hamada~{\cite[Theorem~3]{OkayamaHamada}}]
\label{thm:SE-Sinc-boundary}
For a given function $\tilde{f}$,
let $p$ and $q$ be defined by~\eqref{eq:def-p-and-q},
and let $f$ be defined by~\eqref{eq:Hamada-func}.
Assume that $f$ is analytic in $\psi(\domD_d)$ with $0<d<\pi$.
Furthermore, assume that
there exist positive constants $C_{\ddagger}$, $\alpha$, and $\beta$
such that~\eqref{eq:f-bound}
holds for all $z\in\psi(\domD_d)$.
Let $\mu=\min\{\alpha,\beta\}$, let $M$ and $N$ be defined as~\eqref{eq:def-MN},
and let $h$ be defined as~\eqref{eq:def-h}.
Then, there exists a positive constant $C$ independent of $n$ such that
\begin{align*}
\sup_{t\in(0,\infty)}\left|
\tilde{f}(t) - \left[\frac{q + p (\E^t - 1)}{\E^t}
+\sum_{j=-M}^N\left\{\tilde{f}(\psi(jh)) - \frac{q + p\E^{jh}}{1+\E^{jh}}\right\}
S(j,h)(\psi^{-1}(t))
\right]
\right|
\leq C \sqrt{n} \E^{-\sqrt{\pi d \mu n}}.
\end{align*}
\end{theorem}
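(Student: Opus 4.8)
The plan is to reduce the statement directly to Theorem~\ref{thm:SE-Sinc} applied to the corrected function $f$ defined in~\eqref{eq:Hamada-func}. The essential observation is that the boundary-treatment approximation~\eqref{eq:general-SE-Sinc} of $\tilde{f}$ deviates from $\tilde{f}$ by exactly the same quantity as the plain SE-Sinc approximation~\eqref{eq:SE-Sinc-approximation} of $f$ deviates from $f$. Once this identity is established, the desired bound is immediate, because $f$ satisfies all the hypotheses of Theorem~\ref{thm:SE-Sinc} by assumption.

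First I would compute the sampled values of the correction term. Since $\psi(x) = \log(1 + \E^x)$, we have $\E^{\psi(x)} = 1 + \E^x$, and therefore
\[
 \frac{q + p(\E^{\psi(jh)} - 1)}{\E^{\psi(jh)}}
 = \frac{q + p\,\E^{jh}}{1 + \E^{jh}}.
\]
Consequently, evaluating~\eqref{eq:Hamada-func} at the sampling point $t = \psi(jh)$ yields
\[
 f(\psi(jh)) = \tilde{f}(\psi(jh)) - \frac{q + p\,\E^{jh}}{1 + \E^{jh}},
\]
which are precisely the coefficients appearing in the sum in~\eqref{eq:general-SE-Sinc}.

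Next I would subtract the boundary-treatment approximation from $\tilde{f}$. Using~\eqref{eq:Hamada-func} to rewrite $\tilde{f}(t) = f(t) + \{q + p(\E^t - 1)\}/\E^t$ together with the coefficient identity above, the two correction terms cancel, leaving
\[
 \tilde{f}(t) - \left[\frac{q + p(\E^t - 1)}{\E^t}
 + \sum_{j=-M}^N\left\{\tilde{f}(\psi(jh)) - \frac{q + p\,\E^{jh}}{1 + \E^{jh}}\right\}S(j,h)(\psi^{-1}(t))\right]
 = f(t) - \sum_{j=-M}^N f(\psi(jh))S(j,h)(\psi^{-1}(t)).
\]
The right-hand side is exactly the error of the plain SE-Sinc approximation of $f$.

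Finally, since $f$ is analytic in $\psi(\domD_d)$ and satisfies the bound~\eqref{eq:f-bound} with the same constants $\alpha$, $\beta$, and $d$, Theorem~\ref{thm:SE-Sinc} applies verbatim with the stated choices of $M$, $N$, and $h$, giving the bound $C\sqrt{n}\,\E^{-\sqrt{\pi d \mu n}}$. There is essentially no residual obstacle here: the entire argument rests on the algebraic identity $\E^{\psi(x)} = 1 + \E^x$, and the only point demanding care is verifying that the correction-term values at the sampling nodes coincide with the stated coefficients—which the computation in the second step confirms.
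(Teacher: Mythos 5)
Your proposal is correct and is essentially the paper's own approach: the paper establishes the analogous DE-version (Theorem~\ref{thm:naive}) by exactly this observation, namely that the boundary-treatment approximation of $\tilde{f}$ is identical to the plain Sinc approximation of the corrected function $f$ from~\eqref{eq:Hamada-func}, so the error bound follows verbatim from the corresponding convergence theorem. The algebraic verification via $\E^{\psi(jh)} = 1 + \E^{jh}$ is the right (and only) thing to check, since the hypotheses are already stated for $f$ rather than $\tilde{f}$.
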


\subsubsection{DE-Sinc approximation and its convergence theorem (new result)}

The SE transformation is not the only variable transformation that maps
$\mathbb{R}$ onto $(0,\infty)$.
In fact, another variable transformation
\[
 t = \phi(x) = \log(1 + \E^{\pi\sinh x})
\]
was proposed~\cite{OkayamaSinc}.
We refer to this transformation as the Double-Exponential (DE) transformation.
With the DE transformation, putting $F(x)=f(\phi(x))$,
we can apply~\eqref{eq:Sinc-approximation} as
\[
 f(\phi(x)) \approx\sum_{j=-n}^n f(\phi(jh))S(j,h)(x),\quad x\in\mathbb{R},
\]
where we choose $M=N=n$ in this case.
This is equivalent to
\begin{equation}
 f(t)\approx\sum_{j=-n}^n f(\phi(jh))S(j,h)(\phi^{-1}(t)),\quad t\in (0,\infty).
\label{eq:DE-Sinc-approximation}
\end{equation}
We refer to this approximation as the DE-Sinc approximation.
%

For efficient approximation through~\eqref{eq:DE-Sinc-approximation},
$f$ should be analytic on a translated domain
\[
 \phi(\domD_d) = \{z=\phi(\zeta):\zeta\in\domD_d\}.
\]
We provide its convergence theorem as follows.
The proof is given in Sect.~\ref{sec:proofs-DE-Sinc}.

\begin{theorem}
\label{thm:DE-Sinc}
Assume that $f$ is analytic in $\phi(\domD_d)$ with $0<d<\pi/2$.
Furthermore, assume that
there exist positive constants $C_{\ddagger}$ and $\mu$ with $\mu\leq 1$
such that
\begin{equation}
|f(z)|\leq C_{\ddagger} \left|z\right|^{\mu} |\E^{-z}|^{\mu}
\label{eq:f-bound-DE}
\end{equation}
holds for all $z\in\phi(\domD_d)$.
Let $h$ be defined as
\begin{equation}
h = \frac{\arsinh(dn/\mu)}{n}.
\label{eq:def-h-DE}
\end{equation}
Then, there exists a positive constant $C$ independent of $n$ such that
\[
 \sup_{t\in(0,\infty)}\left|
  f(t) - \sum_{j=-n}^n f(\phi(jh))S(j,h)(\phi^{-1}(t))
 \right|
\leq C \E^{-\pi d n/\arsinh(dn/\mu)}.
\]
\end{theorem}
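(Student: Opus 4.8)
The plan is to follow the same template as the SE case (Theorem~\ref{thm:SE-Sinc}): reduce the statement to the standard convergence theorem for the plain Sinc approximation on $\mathbb{R}$ applied to the transformed function $F(x)=f(\phi(x))$, and then balance the two error contributions through the prescribed choice of $h$. First I would verify that $F$ is analytic on the strip $\domD_d$. Since $\phi(\domD_d)$ is exactly the domain on which $f$ is assumed analytic, and $\phi$ is itself analytic on $\domD_d$ (this is where the restriction $0<d<\pi/2$ first enters, keeping $1+\E^{\pi\sinh\zeta}$ off the branch cut of the logarithm), the composition $F=f\circ\phi$ is analytic on $\domD_d$.

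The core of the argument is to transfer the pointwise bound~\eqref{eq:f-bound-DE} into a uniform decay estimate for $|F(\zeta)|$ over the whole strip. Writing $\zeta=x+\I y$ with $|y|<d$, I would substitute $z=\phi(\zeta)$ into~\eqref{eq:f-bound-DE}, obtaining $|F(\zeta)|\leq C_{\ddagger}|\phi(\zeta)|^{\mu}|\E^{-\phi(\zeta)}|^{\mu}$, and analyze the two factors via $\phi(\zeta)=\log(1+\E^{\pi\sinh\zeta})$. Using $\sinh(x+\I y)=\sinh x\cos y+\I\cosh x\sin y$, the modulus $|\E^{\pi\sinh\zeta}|=\E^{\pi\sinh x\cos y}$ is governed by $\cos y$, which stays positive precisely because $d<\pi/2$; this controls $\Re\phi(\zeta)$ and hence $|\E^{-\phi(\zeta)}|$. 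The outcome of this (technical) estimation is that $|F(\zeta)|$ decays double-exponentially in $|x|$, uniformly for $|y|\leq d'<d$, so that $F$ lies in the function class to which the general Sinc theorem applies.

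With the decay in hand, I would split the error in the standard way as
\begin{align*}
 \left|F(x)-\sum_{j=-n}^{n}F(jh)S(j,h)(x)\right|
 &\leq \left|F(x)-\sum_{j=-\infty}^{\infty}F(jh)S(j,h)(x)\right|
 +\sum_{|j|>n}|F(jh)|,
\end{align*}
using $|S(j,h)(x)|\leq 1$ for the tail. The first (discretization) term is bounded by the standard $\mathbb{R}$-Sinc theorem by $C_1\E^{-\pi d/h}$, since $F$ is analytic and suitably decaying on $\domD_d$. For the second (truncation) term, the estimate from the previous paragraph yields a node bound of the form $|F(jh)|\leq C_2\E^{-\mu\pi\sinh(|j|h)}$ (the polynomial factor $|\phi|^{\mu}$ being absorbed), and summing this rapidly decaying tail shows it is dominated by its endpoint value at $j=\pm n$, namely $\E^{-\mu\pi\sinh(nh)}$.

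Finally I would substitute $h=\arsinh(dn/\mu)/n$. This gives $\pi d/h=\pi d n/\arsinh(dn/\mu)$ for the discretization term, which is exactly the claimed rate, while $\sinh(nh)=dn/\mu$ makes the truncation term $\E^{-\pi d n}$, which is exponentially smaller and is therefore absorbed into the constant $C$. The main obstacle is the second paragraph: carrying out the uniform estimate of $|F(\zeta)|$ on the strip, where one must simultaneously control the growing imaginary part $\pi\cosh x\sin y$ of $\pi\sinh\zeta$ (affecting only the phase) and the sign and size of $\pi\sinh x\cos y$ (controlling the magnitude), and patch together the regime $t=\phi(\zeta)\to 0$, where $|\phi(\zeta)|^{\mu}$ dominates, with the regime $t\to\infty$, where $|\E^{-\phi(\zeta)}|^{\mu}$ dominates.
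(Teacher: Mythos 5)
Your proposal follows essentially the same route as the paper: show $F=f\circ\phi$ is analytic and double\-/exponentially decaying on the strip, split the error into a discretization part (bounded by the standard strip theorem as $\OO(\E^{-\pi d/h})$) and a truncation part (bounded by the tail sum of $|F(jh)|$), then substitute $h=\arsinh(dn/\mu)/n$ so that the truncation term $\OO(h^{-1}\E^{-\pi\mu\sinh(nh)})=\OO(h^{-1}\E^{-\pi d n})$ is absorbed. One small imprecision: the node bound $|F(jh)|\leq C_2\E^{-\mu\pi\sinh(|j|h)}$ with a $j$-independent constant is not literally true (the factor $|\phi(jh)|^{\mu}\sim(\pi\cosh(jh))^{\mu}$ is unbounded and must be retained, as the paper does, before the integral comparison $\int_{nh}^{\infty}\cosh^{\mu}x\,\E^{-\pi\mu\sinh x}\D x$), but keeping that factor does not change the endpoint estimate or the conclusion.
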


\begin{remark}
The error of~\eqref{eq:DE-Sinc-approximation} was also
analyzed by Okayama~\cite[Theorem~2.9]{OkayamaSinc}.
However, in the existing theorem, the formula of $h$ is set as
\[
 h = \frac{\log(2 d n/\mu)}{n},
\]
which is different from~\eqref{eq:def-h-DE}.
In the present paper, we set $h$ as~\eqref{eq:def-h-DE},
and therefore we establish another theorem here.
\end{remark}

\subsubsection{DE-Sinc approximation with a boundary treatment and its convergence theorem (new result)}

By the condition~\eqref{eq:f-bound-DE},
Theorem~\ref{thm:DE-Sinc} also requires
$f$ to be zero at the boundary of $(0,\infty)$.
In the case of $\tilde{f}$ with general boundary values,
we consider the same function $f$ as~\eqref{eq:Hamada-func}.
Then, we apply~\eqref{eq:DE-Sinc-approximation},
which is equivalent to
\begin{equation}
 \tilde{f}(t)\approx
\frac{q + p(\E^t - 1)}{\E^t}
+\sum_{j=-n}^n\left(\tilde{f}(\phi(jh)) - \frac{q + p\E^{\pi\sinh(jh)}}{1 + \E^{\pi\sinh(jh)}}\right)
S(j,h)(\phi^{-1}(t)),\quad t\in(0,\infty).
\label{eq:general-DE-Sinc}
\end{equation}
We refer to this approximation as the DE-Sinc approximation with a
boundary treatment.
We provide its convergence theorem as follows.
The proof is given in Sect.~\ref{sec:proofs-DE-Sinc}.

\begin{theorem}
\label{thm:DE-Sinc-boundary}
For a given function $\tilde{f}$,
let $p$ and $q$ be defined by~\eqref{eq:def-p-and-q},
and let $f$ be defined by~\eqref{eq:Hamada-func}.
Assume that $f$ is analytic in $\phi(\domD_d)$ with $0<d<\pi/2$.
Furthermore, assume that
there exist positive constants $C_{\ddagger}$ and $\mu$ with $\mu\leq 1$
such that~\eqref{eq:f-bound-DE}
holds for all $z\in\phi(\domD_d)$.
Let $h$ be defined as~\eqref{eq:def-h-DE}.
Then, there exists a positive constant $C$ independent of $n$ such that
\begin{align*}
\sup_{t\in(0,\infty)}\left|
\tilde{f}(t) - \left[\frac{q + p (\E^t - 1)}{\E^t}
+\sum_{j=-n}^n\left\{\tilde{f}(\phi(jh)) - \frac{q + p\E^{\pi\sinh(jh)}}{1+\E^{\pi\sinh(jh)}}\right\}
S(j,h)(\phi^{-1}(t))
\right]
\right|
\leq C \E^{-\pi d n/\arsinh(d n/\mu)}.
\end{align*}
\end{theorem}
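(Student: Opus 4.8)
The plan is to reduce the claim to Theorem~\ref{thm:DE-Sinc} by exhibiting an exact algebraic cancellation between the boundary correction and the values sampled at the Sinc nodes. First I would introduce the shorthand
\[
 B(t) = \frac{q + p(\E^t - 1)}{\E^t},
\]
so that $f(t) = \tilde{f}(t) - B(t)$ by~\eqref{eq:Hamada-func}, and $\tilde{f}(t) = f(t) + B(t)$. The crucial observation is that the DE transformation satisfies $\E^{\phi(x)} = 1 + \E^{\pi\sinh x}$, because $\phi(x) = \log(1 + \E^{\pi\sinh x})$; consequently $\E^{\phi(x)} - 1 = \E^{\pi\sinh x}$, and evaluating $B$ at a Sinc node $t = \phi(jh)$ yields exactly
\[
 B(\phi(jh)) = \frac{q + p\E^{\pi\sinh(jh)}}{1 + \E^{\pi\sinh(jh)}}.
\]
This is precisely the quantity subtracted from $\tilde{f}(\phi(jh))$ inside the sum in~\eqref{eq:general-DE-Sinc}. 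Hence every sampled coefficient there equals $f(\phi(jh)) = \tilde{f}(\phi(jh)) - B(\phi(jh))$.

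Second, I would substitute this identity into~\eqref{eq:general-DE-Sinc}, which shows that the bracketed approximant of $\tilde{f}$ is nothing but $B(t)$ plus the plain DE-Sinc approximation of $f$. Subtracting this from $\tilde{f}(t) = f(t) + B(t)$, the boundary term $B(t)$ cancels identically, and the error of the boundary-treated approximation of $\tilde{f}$ coincides pointwise with the error of the ordinary DE-Sinc approximation of $f$:
\[
 \tilde{f}(t) - \left[B(t) + \sum_{j=-n}^n f(\phi(jh))S(j,h)(\phi^{-1}(t))\right]
 = f(t) - \sum_{j=-n}^n f(\phi(jh))S(j,h)(\phi^{-1}(t)).
\]

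Finally, I would invoke Theorem~\ref{thm:DE-Sinc}. Its hypotheses---analyticity of $f$ in $\phi(\domD_d)$ with $0 < d < \pi/2$, the bound~\eqref{eq:f-bound-DE} with $\mu \leq 1$, and the mesh size $h$ given by~\eqref{eq:def-h-DE}---are exactly the assumptions imposed in the present statement on $f$. Taking the supremum over $t \in (0,\infty)$ of the right-hand side of the displayed identity and applying the theorem immediately gives the asserted bound $C\E^{-\pi d n/\arsinh(d n/\mu)}$. There is no substantial analytical obstacle here: the whole content is the exact cancellation, so once the node identity $\E^{\phi(jh)} = 1 + \E^{\pi\sinh(jh)}$ is verified the result is essentially a corollary of Theorem~\ref{thm:DE-Sinc}. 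The only points requiring care are confirming that $B$ reproduces the prescribed boundary values (indeed $B(t) = p + (q - p)\E^{-t}$, so $B(t) \to p$ as $t \to \infty$ and $B(t) \to q$ as $t \to 0$, whence $f$ vanishes at both ends consistently with~\eqref{eq:f-bound-DE}) and that the node-coefficient matching holds verbatim; both are elementary checks.
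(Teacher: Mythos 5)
Your argument is correct and is exactly the paper's own route: the paper likewise observes that the boundary-treated approximation~\eqref{eq:general-DE-Sinc} is identically the plain DE-Sinc approximation~\eqref{eq:DE-Sinc-approximation} applied to $f$ from~\eqref{eq:Hamada-func} (via the node identity $\E^{\phi(jh)} = 1 + \E^{\pi\sinh(jh)}$), and then cites Theorem~\ref{thm:DE-Sinc} (in its explicit-constant form, Theorem~\ref{thm:DE-Sinc-with-constant}). No gaps.
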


\subsection{Sinc indefinite integration}

Integrating both sides of~\eqref{eq:Sinc-approximation},
we have
\begin{equation}
 \int_{-\infty}^{\xi} F(x)\D x
\approx \sum_{j=-M}^N F(jh)\int_{-\infty}^{\xi}S(j,h)(x)\D x
=\sum_{j=-M}^N F(jh)J(j,h)(\xi),\quad\xi\in\mathbb{R},
\label{eq:Sinc-indef}
\end{equation}
where
\[
 J(j,h)(x) = h\left\{\frac{1}{2} + \frac{1}{\pi}\Si\left(\frac{\pi(x - jh)}{h}\right)\right\}.
\]
Here, $\Si(x)$ is the sine integral defined by~\eqref{eq:Si}.
The approximation~\eqref{eq:Sinc-indef} is called
the Sinc indefinite integration.
Similar to the Sinc approximation,
the Sinc indefinite integration is frequently combined with
a variable transformation.

\subsubsection{SE-Sinc indefinite integration and its convergence theorem}

In the case of the following integral
\[
 \int_0^t f(s)\D s, \quad t\in (0,\infty),
\]
where $f(s)$ decays exponentially as $s\to\infty$,
the SE transformation $s=\psi(x)$ allows us to apply
the Sinc indefinite integration~\eqref{eq:Sinc-indef} as
\[
 \int_0^t f(s)\D s
=\int_{-\infty}^{\psi^{-1}(t)} f(\psi(x))\psi'(x)\D x
\approx \sum_{j=-M}^N f(\psi(jh))\psi'(jh)J(j,h)(\psi^{-1}(t)),
\quad t\in(0,\infty).
\]
We refer to this approximation as the SE-Sinc indefinite integration.
Its convergence was analyzed as follows.

\begin{theorem}[Hara and Okayama~{\cite[Theorem~2]{HaraOkayamaIndef}}]
\label{thm:SE-Sinc-indef}
Assume that $f$ is analytic in $\psi(\domD_d)$ with $0<d<\pi$.
Furthermore, assume that there exists positive constants
$C_{\ddagger}$, $\beta$ and $\alpha$ with $0<\alpha\leq 1$ such that
\begin{equation}
|f(z)|\leq C_{\ddagger} \left|\frac{z}{1+z}\right|^{\alpha-1} |\E^{-z}|^{\beta}
\label{eq:f-bound-indef}
\end{equation}
holds for all $z\in\psi(\domD_d)$.
Let $\mu=\min\{\alpha,\beta\}$, let $M$ and $N$ be defined as~\eqref{eq:def-MN},
and let $h$ be defined as~\eqref{eq:def-h}.
Then, there exists a positive constant $C$ independent of $n$ such that
\[
 \sup_{t\in(0,\infty)}\left|
\int_0^t f(s)\D s - \sum_{j=-M}^N f(\psi(jh))\psi'(jh)J(j,h)(\psi^{-1}(t))
\right|\leq C \E^{-\sqrt{\pi d \mu n}}.
\]
\end{theorem}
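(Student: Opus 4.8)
The plan is to reduce the claim to the error analysis of the plain Sinc indefinite integration~\eqref{eq:Sinc-indef} on $\mathbb{R}$ applied to the transformed integrand, and then to verify the hypotheses needed for the latter. Setting $F(x)=f(\psi(x))\psi'(x)$ and $\xi=\psi^{-1}(t)$, the change of variables $s=\psi(x)$ turns the target quantity into
\[
\sup_{\xi\in\mathbb{R}}\left|\int_{-\infty}^{\xi}F(x)\D x-\sum_{j=-M}^{N}F(jh)J(j,h)(\xi)\right|,
\]
since $\psi$ maps $\mathbb{R}$ bijectively onto $(0,\infty)$ with $\psi(x)\to 0$ as $x\to-\infty$. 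Thus it suffices to control the error of~\eqref{eq:Sinc-indef} for $F$.

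First I would check that $F$ is analytic on $\domD_d$: the map $\psi(\zeta)=\log(1+\E^{\zeta})$ and its derivative $\psi'(\zeta)=\E^{\zeta}/(1+\E^{\zeta})$ are analytic on $\domD_d$ because $1+\E^{\zeta}$ vanishes only at $\zeta=\pm\I\pi,\pm 3\I\pi,\dots$, which lie outside $\domD_d$ for $d<\pi$, so composing with the analytic $f$ on $\psi(\domD_d)$ gives analyticity of $F$. Next I would translate the growth bound~\eqref{eq:f-bound-indef} into a two-sided exponential decay of $F$. On the real line, as $x\to+\infty$ one has $\psi(x)\to\infty$, $\psi'(x)\to 1$, and $|z/(1+z)|^{\alpha-1}\to 1$, so $|F(x)|\leq C\E^{-\beta x}$; as $x\to-\infty$ one has $\psi(x)\sim\E^{x}$ and $\psi'(x)\sim\E^{x}$, so $|z/(1+z)|^{\alpha-1}\sim\E^{(\alpha-1)x}$ and the Jacobian $\psi'$ supplies the missing $\E^{x}$, yielding $|F(x)|\leq C\E^{\alpha x}$. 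The same estimates on the horizontal lines $\Im\zeta=\pm d$, using~\eqref{eq:f-bound-indef} on all of $\psi(\domD_d)$, give the decay of $F$ needed on the boundary of the strip. The exponent $\alpha-1$ in~\eqref{eq:f-bound-indef}, rather than $\alpha$, is exactly what is absorbed by $\psi'$ so that $F$ itself decays at rate $\alpha$ at the left end; tracking this bookkeeping is the delicate point.

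With analyticity and the two-sided decay in hand, I would invoke the standard error estimate for the Sinc indefinite integration of a function analytic and exponentially decaying on $\domD_d$. This splits the error into a discretization part and two truncation parts. The discretization part, i.e.\ the difference between $\int_{-\infty}^{\xi}F$ and the bi-infinite sum $\sum_{j=-\infty}^{\infty}F(jh)J(j,h)(\xi)$, is bounded uniformly in $\xi$ by $C\E^{-\pi d/h}$ via a contour-integral representation over $\partial\domD_d$; unlike the Sinc approximation, no spurious factor $1/h\sim\sqrt{n}$ appears, which is why the final bound carries no $\sqrt{n}$. The two truncation parts are controlled using the uniform bound $|J(j,h)(\xi)|\leq Ch$ (boundedness of $\Si$) together with the decay of $F$: the tail $\sum_{j>N}$ is $\leq Ch\sum_{j>N}\E^{-\beta jh}\leq C\E^{-\beta Nh}$ and, symmetrically, $\sum_{j<-M}\leq C\E^{-\alpha Mh}$.

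Finally I would balance the three contributions. With $M$ and $N$ chosen as in~\eqref{eq:def-MN}, both truncation exponents dominate $\mu n h$, so the truncation error is $\leq C\E^{-\mu n h}$; and with $h$ as in~\eqref{eq:def-h} one computes $\pi d/h=\sqrt{\pi d\mu n}=\mu n h$, so the discretization and truncation errors share the common order $\E^{-\sqrt{\pi d\mu n}}$, giving the stated estimate. The main obstacle is the uniform-in-$\xi$ discretization bound: establishing $C\E^{-\pi d/h}$ without a spurious $\sqrt{n}$ factor requires the sharper indefinite-integration analysis rather than a naive termwise use of Theorem~\ref{thm:SE-Sinc}.
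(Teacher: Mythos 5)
This theorem is imported verbatim from Hara and Okayama~\cite{HaraOkayamaIndef}; the present paper gives no proof of it, so there is no internal argument to compare against. Your outline is nevertheless the standard and essentially correct route: transform to $F(x)=f(\psi(x))\psi'(x)$ on $\domD_d$, check that the Jacobian converts the exponent $\alpha-1$ into genuine two\nobreakdash-sided decay of rates $\alpha$ and $\beta$, split into a discretization part bounded by $C\E^{-\pi d/h}$ (with no $1/h$ factor, which is indeed the crucial difference from Theorem~\ref{thm:SE-Sinc}) and truncation parts bounded via $|J(j,h)(\xi)|\leq Ch$ and the tail sums, and then balance the exponents using~\eqref{eq:def-MN} and~\eqref{eq:def-h} so that all three contributions are $\OO(\E^{-\sqrt{\pi d\mu n}})$; this matches how the cited reference proceeds.
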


\subsubsection{DE-Sinc indefinite integration and its convergence theorem}

In the case of the Sinc indefinite integration as well,
we may use the DE transformation instead of the SE transformation.
If the DE transformation is employed, we have
\[
 \int_0^t f(s)\D s
=\int_{-\infty}^{\phi^{-1}(t)} f(\phi(x))\phi'(x)\D x
\approx \sum_{j=-M}^N f(\phi(jh))\phi'(jh)J(j,h)(\phi^{-1}(t)),
\quad t\in(0,\infty).
\]
We refer to this approximation as the DE-Sinc indefinite integration.
Its convergence was analyzed as follows.

\begin{theorem}[Hara and Okayama~{\cite[Theorem~2]{HaraOkayama}}]
\label{thm:DE-Sinc-indef}
Assume that $f$ is analytic in $\phi(\domD_d)$ with $0<d<\pi/2$.
Furthermore, assume that there exists positive constants
$C_{\ddagger}$, $\beta$ and $\alpha$ with $0<\alpha\leq 1$
such that~\eqref{eq:f-bound-indef}
holds for all $z\in\psi(\domD_d)$.
Let $\mu=\min\{\alpha,\beta\}$, let $M$ and $N$ be defined as
\begin{equation}
\left\{
\begin{aligned}
M&=n,& N&=\left\lceil\frac{1}{h}\arsinh\left(\frac{\alpha}{\beta}\sinh(nh)\right)\right\rceil
 &&(\text{if}\,\,\,\mu = \alpha),\\
N&=n,& M&=\left\lceil\frac{1}{h}\arsinh\left(\frac{\beta}{\alpha}\sinh(nh)\right)\right\rceil
 &&(\text{if}\,\,\,\mu = \beta),
\end{aligned}
\label{eq:def-MN-DE}
\right.
\end{equation}
and let $h$ be defined as~\eqref{eq:def-h-DE}.
Then, there exists a positive constant $C$ independent of $n$ such that
\[
 \sup_{t\in(0,\infty)}\left|
\int_0^t f(s)\D s - \sum_{j=-M}^N f(\phi(jh))\phi'(jh)J(j,h)(\phi^{-1}(t))
\right|\leq C \frac{\arsinh(dn/\mu)}{n} \E^{-\pi d n/\arsinh(dn/\mu)}.
\]
\end{theorem}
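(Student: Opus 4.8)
\section*{Proof proposal}

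The plan is to reduce the statement to a uniform error estimate for the plain Sinc indefinite integration on $\mathbb{R}$ and then to analyze the two standard error components separately. First I would apply the DE transformation $s=\phi(x)$, which turns the target into $\int_0^t f(s)\D s=\int_{-\infty}^{\xi}g(x)\D x$ with $\xi=\phi^{-1}(t)$ and $g(x)=f(\phi(x))\phi'(x)$, and the approximant into $\sum_{j=-M}^N g(jh)J(j,h)(\xi)$; since $t\in(0,\infty)$ corresponds bijectively to $\xi\in\mathbb{R}$, it suffices to bound the error uniformly in $\xi\in\mathbb{R}$. The first substantive step is to verify that $g$ lies in the function class the indefinite-integration machinery requires. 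Because $\phi$ is analytic and maps $\domD_d$ into $\phi(\domD_d)$ for $0<d<\pi/2$ (the restriction $d<\pi/2$ being exactly what keeps $\pi\sinh(\domD_d)$ away from the logarithmic singularities of $\phi$), the function $g$ is analytic in $\domD_d$. Combining~\eqref{eq:f-bound-indef} with the asymptotics $\phi(x)\sim\E^{\pi\sinh x}$, $\phi'(x)\sim\pi\cosh x\,\E^{\pi\sinh x}$ as $x\to-\infty$ and $\phi(x)\sim\pi\sinh x$, $\phi'(x)\sim\pi\cosh x$ as $x\to+\infty$, I would show that $|g(x)|$ is bounded by a constant times $\cosh x\,\E^{\alpha\pi\sinh x}$ for $x\le 0$ and by a constant times $\cosh x\,\E^{-\beta\pi\sinh x}$ for $x\ge 0$; that is, $g$ decays double-exponentially at both ends, with rates governed by $\alpha$ and $\beta$.

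Next I would split the error as $E(\xi)=E_{\mathrm d}(\xi)+E_{\mathrm t}(\xi)$, where $E_{\mathrm d}(\xi)=\int_{-\infty}^{\xi}g(x)\D x-\sum_{j=-\infty}^{\infty}g(jh)J(j,h)(\xi)$ is the discretization error using all lattice points and $E_{\mathrm t}(\xi)=\bigl(\sum_{j<-M}+\sum_{j>N}\bigr)g(jh)J(j,h)(\xi)$ is the truncation error from restricting to $-M\le j\le N$. The core of the proof is the uniform bound on $E_{\mathrm d}$. The key structural identity is $\frac{\mathrm{d}}{\mathrm{d}\xi}J(j,h)(\xi)=S(j,h)(\xi)$, so that $E_{\mathrm d}'(\xi)$ equals the Sinc approximation error of $g$ while $E_{\mathrm d}(-\infty)=0$; hence $E_{\mathrm d}(\xi)$ is the integral over $(-\infty,\xi)$ of that approximation error. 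Rather than integrating its supremum (which would diverge), I would substitute the standard contour-integral representation of the Sinc approximation error over $\partial\domD_d$, interchange the order of integration, and exploit that the resulting inner integral $\int_{-\infty}^{\xi}\sin(\pi x/h)/(\zeta-x)\D x$ is a sine-integral-type quantity bounded uniformly in $\xi$ and in $\zeta\in\partial\domD_d$. On $\partial\domD_d$ the factor $1/\sin(\pi\zeta/h)$ contributes $\OO(\E^{-\pi d/h})$, while the double-exponential decay of $g$ renders $\int_{\partial\domD_d}|g(\zeta)|\,|\mathrm{d}\zeta|$ finite; together these give $|E_{\mathrm d}(\xi)|\le C h\,\E^{-\pi d/h}$ uniformly in $\xi$, the prefactor $h$ being characteristic of indefinite integration as opposed to quadrature or approximation.

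For the truncation term I would use the elementary uniform bound $|J(j,h)(\xi)|\le C h$ (valid since $\frac{1}{2}+\frac{1}{\pi}\Si$ is bounded), giving $|E_{\mathrm t}(\xi)|\le C h\sum_{j<-M}|g(jh)|+C h\sum_{j>N}|g(jh)|$; comparing these sums with the tail integrals $\int_{Mh}^{\infty}\cosh x\,\E^{-\alpha\pi\sinh x}\D x$ and $\int_{Nh}^{\infty}\cosh x\,\E^{-\beta\pi\sinh x}\D x$ and substituting $u=\sinh x$ yields bounds of order $\E^{-\alpha\pi\sinh(Mh)}$ and $\E^{-\beta\pi\sinh(Nh)}$. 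The choice~\eqref{eq:def-MN-DE} is engineered so that, using $\sinh(nh)=dn/\mu$ from~\eqref{eq:def-h-DE}, both exponents equal $\pi d n$, whereas the discretization exponent is $\pi d/h=\pi d n/\arsinh(dn/\mu)$. Since $\arsinh(dn/\mu)$ grows only logarithmically, $\pi d n/\arsinh(dn/\mu)$ is far smaller than $\pi d n$, so the truncation error is negligible beside the discretization error, and the total is dominated by $C h\,\E^{-\pi d/h}=C\frac{\arsinh(dn/\mu)}{n}\E^{-\pi d n/\arsinh(dn/\mu)}$, which is the claimed estimate. I expect the principal obstacle to be the uniform-in-$\xi$ control of $E_{\mathrm d}$: because $J(j,h)(\xi)\to h$ rather than $0$ as $\xi\to+\infty$, the defining series converges only conditionally and cannot be bounded termwise, so the contour representation together with the uniform boundedness of the sine-integral kernel is exactly the device needed to circumvent this difficulty, and its careful execution is the crux of the argument.
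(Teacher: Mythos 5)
A preliminary remark: this paper does not prove Theorem~\ref{thm:DE-Sinc-indef} at all --- it is quoted from Hara and Okayama~\cite{HaraOkayama} --- so your proposal can only be measured against the standard machinery of that cited proof and against the closely parallel argument this paper gives for Theorem~\ref{thm:DE-Sinc} in Sect.~\ref{sec:proofs-DE-Sinc}. By that measure your outline is essentially the expected proof: transplant to $\mathbb{R}$ via $g=f(\phi(\cdot))\phi'(\cdot)$, split into discretization plus truncation (the same decomposition as Lemmas~\ref{lem:discretization} and~\ref{lem:truncation}), bound the truncation term via $|J(j,h)(\xi)|\leq Ch$ and tail-integral comparison, and control the discretization term by the contour representation. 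Your rate bookkeeping is also correct: with $\sinh(nh)=dn/\mu$ from~\eqref{eq:def-h-DE}, the choice~\eqref{eq:def-MN-DE} forces both truncation exponents to be at least $\pi dn$, which is negligible against $\E^{-\pi d/h}$, and your double-exponential bounds $|g(x)|\lesssim \cosh x\,\E^{\alpha\pi\sinh x}$ ($x\leq 0$) and $\cosh x\,\E^{-\beta\pi\sinh x}$ ($x\geq 0$) follow correctly from~\eqref{eq:f-bound-indef} and the asymptotics of $\phi$, $\phi'$.

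Three spots need tightening before this is a complete proof. First, you establish the decay of $g$ only on the real axis, but the contour step needs $\int_{\partial\domD_d}|g(\zeta)|\,|\D\zeta|<\infty$, i.e., the analogous bounds on the lines $\Im\zeta=\pm d$; these do follow from~\eqref{eq:f-bound-indef} holding on all of $\phi(\domD_d)$ (the statement's ``$\psi(\domD_d)$'' there is a typo for $\phi(\domD_d)$, which you silently and correctly assumed), via estimates of the type of Lemmas~\ref{lem:log-bound} and~\ref{lem:Lemma422}, but this must actually be carried out. Second, your accounting for the crucial factor $h$ is vague: you call the inner kernel $\int_{-\infty}^{\xi}\sin(\pi x/h)/(\zeta-x)\D x$ ``bounded uniformly'' and then attach the prefactor $h$ by fiat. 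In fact one integration by parts shows this kernel is $\OO(h/d)$ uniformly in $\xi\in\mathbb{R}$ and $\zeta\in\partial\domD_d$, and that is precisely where the factor $\arsinh(dn/\mu)/n=h$ in the claimed bound originates; a mere $\OO(1)$ bound would only yield $C\E^{-\pi d/h}$ and would not prove the stated estimate. Third, your motivating remark that the series ``converges only conditionally'' is wrong: since $|J(j,h)(\xi)|\leq Ch$ uniformly and $g(jh)$ decays double-exponentially, the series converges absolutely; the genuine difficulty is uniformity in $\xi$ --- one cannot integrate the pointwise supremum of the Sinc-approximation error of $g$ over $(-\infty,\xi]$ --- and, relatedly, the interchange of the $x$- and $\zeta$-integrations requires a limiting argument, because the double integral fails to converge absolutely near $x=-\infty$. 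None of these affects the architecture; with the cited lemmas the repairs are routine, and the approach coincides with the standard one.
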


\section{Sinc-Nystr\"om methods} \label{sec:SNM}

In this section, we describe two Sinc-Nystr\"{o}m methods
and their error analyses presented in Hara and Okayama~\cite{HaraOkayama}.
The derivation of the methods is through the following two steps:
(i) by integration, rewrite the given problem~\eqref{eq:ODE} as
\begin{align}
\mathbd{y}(t)=\mathbd{r}+\int_{0}^{t}\{K(s)\mathbd{y}(s)+\mathbd{g}(s)\}\D s,
\label{eq:y(t)}
\end{align}
and (ii) apply the SE-Sinc indefinite integration
or the DE-Sinc indefinite integration to the integral.
We explain these methods individually.

\subsection{SE-Sinc-Nystr\"{o}m method and its error analysis} \label{sec:SE-SNM}

Let $l=M+N+1$ and let $\mathbd{y}^{(l)}(t)$ be an approximate solution of
$\mathbd{y}(t)$.
Approximating the integral in~\eqref{eq:y(t)} based on
Theorem~\ref{thm:SE-Sinc-indef}, we can derive
\begin{align}
\mathbd{y}^{(l)}(t)
=\mathbd{r}+\sum_{j=-M}^N\Bigl\{K(\psi(jh))\mathbd{y}^{(l)}(\psi(jh))
+\mathbd{g}(\psi(jh))\Bigr\}\psi'(jh)J(j,h)(\psi^{-1}(t)). \label{eq:ynMMS}
\end{align}
To determine the unknown coefficients $\mathbd{y}^{(l)}(\psi(jh))$, we set
sampling points at $t=\psi(ih)$\, $(i=-M,\,-M+1,\,\ldots,\,N)$.
We then obtain a system of linear equations given by
\begin{align}
(I_m\otimes I_l-\{I_m\otimes(hI_l^{(-1)}D^{(\psi)}_l)\}[K_{ij}^{(\psi)}])\mathbd{Y}^{(\psi)}
=\mathbd{R}+\{I_m\otimes (hI_l^{(-1)}D^{(\psi)}_l)\}\mathbd{G}^{(\psi)},
 \label{renritsuMMS}
\end{align}
where $I_l$ and $I_m$ are identity matrices, $\otimes$ denotes the Kronecker
product, and $I^{(-1)}_l$ is an $l\times l$ matrix whose $(i,j)$ entries
are defined as
\begin{align*}
(I^{(-1)}_{l})_{ij}=\frac{1}{2}+\frac{1}{\pi}\Si\left(\pi(i-j)\right)
\quad(i,j=-M,\,-M+1,\,\dots,\,N).
\end{align*}
Moreover, $D^{(\psi)}_{l}$ and $K^{(\psi)}_{ij}$ are $l\times l$ diagonal
matrices defined as
\begin{align*}
D^{(\psi)}_{l}&=\diag[\psi'(-Mh),\,\dots,\,\psi'(Nh)], \\
K^{(\psi)}_{ij}&=\diag[k_{ij}(\psi(-Mh)),\,\dots,\,k_{ij}(\psi(Nh))],
\end{align*}
and $[K^{(\psi)}_{ij}]$ is a block matrix whose $(i,j)$ entry is
$K^{(\psi)}_{ij}~(i,j=1,\,\dots,\,m)$. Furthermore, $\mathbd{R}$,
$\mathbd{Y}^{(\psi)}$, and $\mathbd{G}^{(\psi)}$ are $lm$-dimensional
vectors defined as follows:
\begin{align*}
\mathbd{R}&=[r_{1},\,\dots,\,r_{1},\,r_{2},\,\dots,\,r_{2},\,\dots,\,r_{m},\,\dots,\,r_{m}]^{\mathrm{T}},\\
\mathbd{Y}^{(\psi)}&=[y^{(l)}_{1}(\psi(-Mh)),\,\dots,\,y^{(l)}_{1}(\psi(Nh)),
y^{(l)}_{2}(\psi(-Mh)),\,\dots,\,y^{(l)}_{2}(\psi(Nh)),
\dots,\,y^{(l)}_{m}(\psi(-Mh)),\,\dots,\,y^{(l)}_{m}(\psi(Nh))]^{\mathrm{T}},\\
\mathbd{G}^{(\psi)}&=[g_{1}(\psi(-Mh)),\,\dots,\,g_{1}(\psi(Nh)),
g_{2}(\psi(-Mh)),\,\dots,\,g_{2}(\psi(Nh)),
\dots,\,g_{m}(\psi(-Mh)),\,\dots,\,g_{m}(\psi(Nh))]^{\mathrm{T}}.
\end{align*}
By solving~\eqref{renritsuMMS}, we can obtain the value of
$\mathbd{y}^{(l)}(\psi(jh))$, from which $\mathbd{y}^{(l)}(t)$ is
determined through~\eqref{eq:ynMMS}. This procedure is the
SE-Sinc-Nystr\"{o}m method.
Its error was analyzed as follows.

\begin{theorem}[Hara and Okayama~{\cite[Theorem~3]{HaraOkayama}}]
\label{thm:SE-Sinc-Nystroem}
Let $\beta$ be a positive constant,
and let $\alpha$ and $d$ be constants with $0 <\alpha\leq 1$ and
$0 < d< \pi$.
Assume that the function $k_{ij}$
$(i, j = 1,\,\dots,\,m)$ is analytic and bounded on $\psi(\domD_d)$,
and $y_i$ and $g_i$ $(i = 1,\,\dots,\,m)$
satisfy the assumption of Theorem~\ref{thm:SE-Sinc-indef}.
Let $h$ be set as~\eqref{eq:def-h}, and let $M$ and $N$
be set as~\eqref{eq:def-MN}.
Let $A_{lm}$ be a coefficient matrix of
the system of linear equation~\eqref{renritsuMMS}, i.e.,
\[
 A_{lm}
 = (I_m\otimes I_l-\{I_m\otimes(hI_l^{(-1)}D^{(\psi)}_l)\}[K_{ij}^{(\psi)}]),
\]
and assume that the inverse matrix of $A_{lm}$ exists.
Then, the error of the approximate solution $\mathbd{y}^{(l)}(t)$
in~\eqref{eq:ynMMS}
is estimated as
\[
 \max_{1\leq i\leq m}
\left\{
\sup_{t\in(0,\infty)}
\left|y_i(t) - y^{(l)}_i(t)\right|
\right\}
\leq \left(C + \hat{C}\|A_{lm}^{-1}\|_{\infty}\right)
\sqrt{n}\E^{-\sqrt{\pi d \mu n}},
\]
where $C$ and $\hat{C}$ are positive constants independent of $n$.
\end{theorem}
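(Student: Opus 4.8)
The plan is to interpose between $\mathbd{y}$ and $\mathbd{y}^{(l)}$ a reference function $\tilde{\mathbd{y}}$ obtained by applying the SE-Sinc indefinite integration to the \emph{exact} integrand in~\eqref{eq:y(t)}:
\[
\tilde{\mathbd{y}}(t)=\mathbd{r}+\sum_{j=-M}^N\bigl\{K(\psi(jh))\mathbd{y}(\psi(jh))+\mathbd{g}(\psi(jh))\bigr\}\psi'(jh)J(j,h)(\psi^{-1}(t)),
\]
which is the same formula as~\eqref{eq:ynMMS} except that the exact nodal values $\mathbd{y}(\psi(jh))$ replace the computed ones $\mathbd{y}^{(l)}(\psi(jh))$. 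I would then split
\[
\mathbd{y}(t)-\mathbd{y}^{(l)}(t)=\bigl[\mathbd{y}(t)-\tilde{\mathbd{y}}(t)\bigr]+\bigl[\tilde{\mathbd{y}}(t)-\mathbd{y}^{(l)}(t)\bigr]
\]
and estimate the two brackets separately.

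For the first bracket, the $a$-th component $y_a(t)-\tilde y_a(t)$ is exactly the error of the SE-Sinc indefinite integration applied to the scalar integrand $s\mapsto\sum_b k_{ab}(s)y_b(s)+g_a(s)$. Since each $k_{ab}$ is analytic and bounded on $\psi(\domD_d)$ and each $y_b,g_a$ satisfies~\eqref{eq:f-bound-indef}, this integrand is analytic on $\psi(\domD_d)$ and obeys~\eqref{eq:f-bound-indef} with an enlarged constant; Theorem~\ref{thm:SE-Sinc-indef} then applies and gives $\sup_{t\in(0,\infty)}|y_a(t)-\tilde y_a(t)|\leq C\E^{-\sqrt{\pi d\mu n}}$ for every $a$.

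The second bracket is handled through the nodal errors. Writing $\mathbd{Y}$ for the vector of exact nodal values (in place of $\mathbd{Y}^{(\psi)}$), I would evaluate $\tilde{\mathbd{y}}$ at the collocation points $t=\psi(ih)$ and use $J(j,h)(ih)=h(I_l^{(-1)})_{ij}$ to unwind the Kronecker-product structure and confirm that $\mathbd{Y}$ satisfies the system~\eqref{renritsuMMS} up to a residual $\mathbd{\delta}$, whose entries are precisely the indefinite-integration errors $y_a(\psi(ih))-\tilde y_a(\psi(ih))$ already bounded in the previous paragraph. Subtracting~\eqref{renritsuMMS} for $\mathbd{Y}^{(\psi)}$ gives $\mathbd{Y}-\mathbd{Y}^{(\psi)}=A_{lm}^{-1}\mathbd{\delta}$, hence
\[
\|\mathbd{Y}-\mathbd{Y}^{(\psi)}\|_{\infty}\leq\|A_{lm}^{-1}\|_{\infty}\|\mathbd{\delta}\|_{\infty}\leq C\|A_{lm}^{-1}\|_{\infty}\E^{-\sqrt{\pi d\mu n}}.
\]
Finally $\tilde{\mathbd{y}}-\mathbd{y}^{(l)}$ is the Sinc interpolant of these nodal errors weighted by $K$ and $\psi'$, so using $|\psi'|\leq1$, the boundedness of $K$, and the uniform estimate $|J(j,h)(\xi)|\leq Ch$ (from the boundedness of $\Si$) I would obtain $\sum_{j=-M}^N|J(j,h)(\xi)|\leq Clh$. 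Because $l=M+N+1=\OO(n)$ while $h=\OO(1/\sqrt n)$ by~\eqref{eq:def-h}, the product $lh=\OO(\sqrt n)$; this is exactly the source of the extra $\sqrt n$ in the statement, and combining it with the nodal bound yields $\sup_{t\in(0,\infty)}|\tilde y_a(t)-y_a^{(l)}(t)|\leq\hat C\sqrt n\,\|A_{lm}^{-1}\|_{\infty}\E^{-\sqrt{\pi d\mu n}}$. Adding the two brackets and absorbing the factor $\sqrt n\geq1$ into the first completes the estimate.

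The step I expect to be the main obstacle is the identification in the second bracket: carefully disentangling the Kronecker-product and block structure of $A_{lm}$ to show that feeding the exact nodal values into the discrete operator reproduces $\tilde{\mathbd{y}}$ at the collocation points, so that the residual $\mathbd{\delta}$ is genuinely the (small) indefinite-integration error rather than some larger remainder. Once this identification is in place, the remaining ingredients---verifying the hypotheses of Theorem~\ref{thm:SE-Sinc-indef} for the combined integrand and the elementary counting $lh=\OO(\sqrt n)$---are routine.
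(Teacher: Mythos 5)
This theorem is imported verbatim from Hara and Okayama~\cite[Theorem~3]{HaraOkayama} and is not proved in the present paper, so there is no in-paper proof to compare against; your argument --- interposing the semi-discrete solution built from exact nodal values, bounding the first piece componentwise via Theorem~\ref{thm:SE-Sinc-indef} applied to $\sum_b k_{ab}y_b+g_a$, recovering the nodal errors from $A_{lm}(\mathbd{Y}-\mathbd{Y}^{(\psi)})=\boldsymbol{\delta}$ with $\boldsymbol{\delta}$ the quadrature residual, and paying the factor $lh=\OO(\sqrt{n})$ in the final interpolation step --- is the standard Nystr\"{o}m error analysis and is sound. It is essentially the route the cited reference takes, so I have nothing to flag.
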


\subsection{DE-Sinc-Nystr\"{o}m method and its error analysis} \label{sec:DE-SNM}

From a comparison of Theorem~\ref{thm:SE-Sinc-indef} with
Theorem~\ref{thm:DE-Sinc-indef},
we can expect that
replacement of $\psi$ by $\phi$ may accelerate the convergence rate.
Approximating the integral in~\eqref{eq:y(t)} based on
Theorem~\ref{thm:DE-Sinc-indef}, we can derive
\begin{align}
\mathbd{y}^{(l)}(t)
=\mathbd{r}+\sum_{j=-M}^N\Bigl\{K(\phi(jh))\mathbd{y}^{(l)}(\phi(jh))
+\mathbd{g}(\phi(jh))\Bigr\}\phi'(jh)J(j,h)(\phi^{-1}(t)). \label{eq:ynMMD}
\end{align}
Setting sampling points at $t=\phi(ih)$ $(i=-M,\,-M+1,\,\dots,\,N)$,
we obtain the following:
\begin{align}
(I_m\otimes I_l-\{I_m\otimes(hI_l^{(-1)}D^{(\phi)}_l)\}[K_{ij}^{(\phi)}])\mathbd{Y}^{(\phi)}
=\mathbd{R}+\{I_m\otimes(hI_l^{(-1)}D^{(\phi)}_l)\}\mathbd{G}^{(\phi)},
\label{renritsuMMD}
\end{align}
for which $\phi$ is used instead of $\psi$. By solving~\eqref{renritsuMMD},
we can obtain the value of $\mathbd{y}^{(l)}(\phi(jh))$, from which
$\mathbd{y}^{(l)}(t)$ is determined  through~\eqref{eq:ynMMD}.
This procedure is the DE-Sinc-Nystr\"{o}m method.
Its error was analyzed as follows.

\begin{theorem}[Hara and Okayama~{\cite[Theorem~4]{HaraOkayama}}]
\label{thm:DE-Sinc-Nystroem}
Let $\beta$ be a positive constant,
and let $\alpha$ and $d$ be constants with $0 <\alpha\leq 1$ and
$0 < d< \pi/2$.
Assume that the function $k_{ij}$
$(i, j = 1,\,\dots,\,m)$ is analytic and bounded on $\phi(\domD_d)$,
and $y_i$ and $g_i$ $(i = 1,\,\dots,\,m)$
satisfy the assumption of Theorem~\ref{thm:DE-Sinc-indef}.
Let $h$ be set as~\eqref{eq:def-h-DE}, and let $M$ and $N$
be set as~\eqref{eq:def-MN-DE}.
Let $B_{lm}$ be a coefficient matrix of
the system of linear equation~\eqref{renritsuMMD}, i.e.,
\[
 B_{lm}
 = (I_m\otimes I_l-\{I_m\otimes(hI_l^{(-1)}D^{(\phi)}_l)\}[K_{ij}^{(\phi)}]),
\]
and assume that the inverse matrix of $B_{lm}$ exists.
Then, the error of the approximate solution $\mathbd{y}^{(l)}(t)$
in~\eqref{eq:ynMMD}
is estimated as
\[
 \max_{1\leq i\leq m}
\left\{
\sup_{t\in(0,\infty)}
\left|y_i(t) - y^{(l)}_i(t)\right|
\right\}
\leq \left(C + \hat{C}\|B_{lm}^{-1}\|_{\infty}\right)
\arsinh(dn/\mu)\E^{-\pi d n/\arsinh(dn/\mu)},
\]
where $C$ and $\hat{C}$ are positive constants independent of $n$.
\end{theorem}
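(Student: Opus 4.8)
The plan is to follow the same two-step structure that underlies Theorem~\ref{thm:SE-Sinc-Nystroem}, replacing the SE transformation $\psi$ by the DE transformation $\phi$ and invoking the DE-Sinc indefinite integration error of Theorem~\ref{thm:DE-Sinc-indef}. First I would split the total error as
\[
\mathbd{y}(t) - \mathbd{y}^{(l)}(t) = \{\mathbd{y}(t) - \bar{\mathbd{y}}(t)\} + \{\bar{\mathbd{y}}(t) - \mathbd{y}^{(l)}(t)\},
\]
where $\bar{\mathbd{y}}(t) = \mathbd{r} + \sum_{j=-M}^N\{K(\phi(jh))\mathbd{y}(\phi(jh)) + \mathbd{g}(\phi(jh))\}\phi'(jh)J(j,h)(\phi^{-1}(t))$ is the DE-Sinc indefinite integration applied to the \emph{exact} integrand. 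The first bracket is a pure discretization error. Since each integrand $f_i(s) = \sum_{j}k_{ij}(s)y_j(s) + g_i(s)$ is analytic on $\phi(\domD_d)$ and inherits the bound~\eqref{eq:f-bound-indef} from the hypotheses on $k_{ij}$, $y_i$, and $g_i$ (boundedness of $k_{ij}$ preserves the form of the bound), Theorem~\ref{thm:DE-Sinc-indef} applies componentwise and bounds it by $C\{\arsinh(dn/\mu)/n\}\E^{-\pi dn/\arsinh(dn/\mu)}$, uniformly in $t$.

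For the second bracket (the error-propagation term), I would first control the nodal errors $\mathbd{e}_i = \mathbd{y}(\phi(ih)) - \mathbd{y}^{(l)}(\phi(ih))$. Evaluating the split at the sampling points $t = \phi(ih)$ and using $J(j,h)(ih) = h(I_l^{(-1)})_{ij}$, the exact nodal values satisfy $B_{lm}\mathbd{E} = \mathbd{\delta}$, where $\mathbd{E}$ collects the $\mathbd{e}_i$ and $\mathbd{\delta}$ collects the nodal discretization errors already bounded above; subtracting the defining system~\eqref{renritsuMMD} for $\mathbd{y}^{(l)}$ cancels the data terms and produces exactly $B_{lm}$. Hence $\mathbd{E} = B_{lm}^{-1}\mathbd{\delta}$, so that $\|\mathbd{E}\|_{\infty}\leq\|B_{lm}^{-1}\|_{\infty}\cdot C\{\arsinh(dn/\mu)/n\}\E^{-\pi dn/\arsinh(dn/\mu)}$.

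It then remains to propagate this nodal bound to general $t$. Writing the second bracket as $\sum_{j=-M}^N K(\phi(jh))\mathbd{e}_j\,\phi'(jh)J(j,h)(\phi^{-1}(t))$ and using boundedness of $K$, I would estimate it by $\|K\|_{\infty}\|\mathbd{E}\|_{\infty}\,W(t)$ with $W(t) = \sum_{j=-M}^N\phi'(jh)\,|J(j,h)(\phi^{-1}(t))|$. The crux of the whole argument is the uniform bound $\sup_{t\in(0,\infty)}W(t) = \OO(n)$, which I would obtain from two ingredients: the elementary estimate $|J(j,h)(\xi)|\leq C_{\Si}\, h$ for all $j$ and $\xi$, valid because $\frac{1}{2} + \frac{1}{\pi}\Si$ is bounded, and the Riemann-sum comparison $h\sum_{j=-M}^N\phi'(jh)\leq \phi(Nh) - \phi(-Mh) + h\phi'(Nh)$. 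Since the choices~\eqref{eq:def-h-DE} and~\eqref{eq:def-MN-DE} give $\sinh(Nh) = \OO(n)$, we get $\phi(Nh) = \log(1 + \E^{\pi\sinh(Nh)}) = \OO(n)$, whereas $h\phi'(Nh) = \OO(\arsinh(dn/\mu))$ is of lower order; thus $W(t) = \OO(n)$ uniformly. Multiplying this $\OO(n)$ factor by the nodal bound converts $\arsinh(dn/\mu)/n$ into $\arsinh(dn/\mu)$, and combining with the first bracket yields the claimed estimate with $C$ from the discretization term and $\hat C$ absorbing $\|K\|_{\infty}$ and the constant in $W$.

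I expect the main obstacle to be establishing $\sup_t W(t) = \OO(n)$ with a constant independent of $n$: the crude bound $h\,l\,\phi'(Nh)$ only gives $\OO(n\log n)$, so one must exploit the monotone, exponentially front-loaded growth of $\phi'$ together with the near-step-function shape of $J(j,h)$ to replace the weighted sum by the integral $\phi(Nh)-\phi(-Mh)$. Once this estimate is in hand, the remaining steps — verifying the componentwise integrand bounds, the matrix identity $B_{lm}\mathbd{E} = \mathbd{\delta}$, and the final bookkeeping of constants — are routine.
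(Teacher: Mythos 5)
This theorem is imported verbatim from Hara and Okayama~\cite[Theorem~4]{HaraOkayama}; the present paper contains no proof of it, so there is nothing in-text to compare against. Your reconstruction is nonetheless the standard (and almost certainly the intended) Nystr\"{o}m argument, and it is sound: the splitting into quadrature error plus propagation through $B_{lm}^{-1}$, the identity $B_{lm}\mathbd{E}=\mathbd{\delta}$ at the nodes, and the uniform bound $\sup_t\sum_j\phi'(jh)|J(j,h)(\phi^{-1}(t))|=\OO(n)$ (which correctly explains why the factor $\arsinh(dn/\mu)/n$ of Theorem~\ref{thm:DE-Sinc-indef} degrades to $\arsinh(dn/\mu)$ in the final estimate) all check out, the last one because $\phi'$ is monotonically increasing so the left-endpoint Riemann sum is dominated by $\phi(Nh)-\phi(-Mh)+h\phi'(Nh)=\OO(n)$ under the choices~\eqref{eq:def-h-DE} and~\eqref{eq:def-MN-DE}.
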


\section{Sinc-collocation methods} \label{sec:SCM}

In this section, we describe two Sinc-collocation methods
presented in Okayama and Hara~\cite{OkayamaHara},
and their error analyses presented in this paper.
The derivation of the methods is through
applying the SE- or DE-Sinc approximation with a boundary treatment
to the approximate solution $\mathbd{y}^{(l)}(t)$.

\subsection{SE-Sinc-collocation method and its error analysis (new result)} \label{sec:SE-SCM}

Here, we approximate $\mathbd{y}^{(l)}$ in~\eqref{eq:ynMMS}
based on Theorem~\ref{thm:SE-Sinc-boundary}.
Then,
we obtain a new approximate solution $\hat{\mathbd{y}}^{(l)}$ defined as
\begin{align}
\hat{\mathbd{y}}^{(l)}(t)
=
\frac{\mathbd{r}+\mathbd{p}^{(\psi)}(\E^t - 1)}{\E^t}
+\sum_{k=-M}^N\Biggl\{ \mathbd{y}^{(l)}(\psi(kh))
-\frac{\mathbd{r}+\mathbd{p}^{(\psi)}\E^{kh}}{1+\E^{kh}}\Biggr\}
S(k,h)(\phi^{-1}(t)),
\label{eq:ynSE-SCM}
\end{align}
where $\mathbd{r}$ is the initial value in~\eqref{eq:ODE}, and
$\mathbd{p}^{(\psi)}$ is given by
\[
 \mathbd{p}^{(\psi)}
=\mathbd{r}+
h\sum_{j=-M}^N \left\{K(\psi(jh))\mathbd{y}^{(l)}(\psi(jh))
+\mathbd{g}(\psi(jh))\right\}\psi'(jh).
\]
Here, $J(j,h)(\psi^{-1}(0))=0$ and
$\lim_{t\to \infty}J(j,h)(\psi^{-1}(t))=h$ are used
to calculate $\mathbd{y}^{(l)}(0)$
and $\lim_{t\to\infty}\mathbd{y}^{(l)}(t)$,
respectively, in accordance with
the definition in~\eqref{eq:ynMMS}.
In summary, by solving~\eqref{renritsuMMS}, we can obtain the value of
$\mathbd{y}^{(l)}(\psi(jh))$, from which $\hat{\mathbd{y}}^{(l)}(t)$ is
determined through~\eqref{eq:ynSE-SCM}. This procedure is the
SE-Sinc-collocation method.
We provide its error analysis as follows.
The proof is given in Sect.~\ref{sec:proofs}.

\begin{theorem}
\label{thm:SE-Sinc-collocation}
Assume that the assumptions of Theorem~\ref{thm:SE-Sinc-Nystroem}
are fulfilled.
Furthermore, assume that there exists a positive constant $H$
such that
\begin{align*}
\max_{i=1,\,\ldots,\,m}|y_i(z) - r_i|
 &\leq H \left|\frac{z}{1+z}\right|^{\alpha},\\
\max_{i=i,\,\ldots,\,m}|y_i(z)|
 &\leq H \left|\E^{-z}\right|^{\beta}
\end{align*}
hold for all $z\in\psi(\domD_d)$.
Then, the error of the approximate solution $\hat{\mathbd{y}}^{(l)}(t)$
in~\eqref{eq:ynSE-SCM}
is estimated as
\[
 \max_{1\leq i\leq m}
\left\{
\sup_{t\in(0,\infty)}
\left|y_i(t) - \hat{y}^{(l)}_i(t)\right|
\right\}
\leq \left(C + \hat{C}\|A_{lm}^{-1}\|_{\infty}\right)
\sqrt{n}\log(n+1)\E^{-\sqrt{\pi d \mu n}},
\]
where $C$ and $\hat{C}$ are positive constants independent of $n$.
\end{theorem}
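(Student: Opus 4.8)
The plan is to insert an intermediate approximant and split the error by the triangle inequality. Let $\check{y}_i$ denote the SE-Sinc approximation with a boundary treatment of Theorem~\ref{thm:SE-Sinc-boundary} applied to the \emph{exact} component $y_i$ itself; since the additional hypotheses give $\lim_{t\to 0}y_i(t)=r_i$ and $\lim_{t\to\infty}y_i(t)=0$, this is
\[
\check{y}_i(t)=\frac{r_i}{\E^t}+\sum_{k=-M}^{N}\left\{y_i(\psi(kh))-\frac{r_i}{1+\E^{kh}}\right\}S(k,h)(\psi^{-1}(t)).
\]
I would then estimate $|y_i(t)-\hat{y}^{(l)}_i(t)|\leq|y_i(t)-\check{y}_i(t)|+|\check{y}_i(t)-\hat{y}^{(l)}_i(t)|$ and treat the two pieces separately. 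The first piece is the genuine discretization error of the boundary-treated SE-Sinc formula on the true solution; the second is the error \emph{propagated} from the Sinc--Nystr\"{o}m data, and it is this piece that will carry both $\|A_{lm}^{-1}\|_{\infty}$ and the additional $\log(n+1)$ factor.

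For the first piece I would apply Theorem~\ref{thm:SE-Sinc-boundary} with $\tilde{f}=y_i$, $q=r_i$, and $p=0$. The only nontrivial point is to verify that the boundary-adjusted function $f_i(t)=y_i(t)-r_i\E^{-t}$ obeys the growth bound~\eqref{eq:f-bound}. Writing $f_i(z)=y_i(z)(1-\E^{-z})+\E^{-z}(y_i(z)-r_i)$ and inserting the two hypotheses $|y_i(z)|\leq H|\E^{-z}|^{\beta}$ and $|y_i(z)-r_i|\leq H|z/(1+z)|^{\alpha}$, one bounds each summand. Here I would use that $\Re z$ is bounded below on $\psi(\domD_d)$ (so that $|\E^{-z}|$ is bounded there) together with $|1-\E^{-z}|\leq C|z/(1+z)|^{\alpha}$ (which holds because $\alpha\leq 1$ near $z=0$ and both sides are comparable elsewhere). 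This yields~\eqref{eq:f-bound} with the decay exponent $\min\{\beta,1\}$; since $\alpha\leq 1$ forces $\mu=\min\{\alpha,\beta\}=\min\{\alpha,\min\{\beta,1\}\}$, Theorem~\ref{thm:SE-Sinc-boundary} gives $\sup_{t}|y_i(t)-\check{y}_i(t)|\leq C\sqrt{n}\E^{-\sqrt{\pi d\mu n}}$ with $C$ independent of $n$.

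For the second piece I would subtract the explicit expressions for $\check{y}_i$ and for $\hat{y}^{(l)}_i$ in~\eqref{eq:ynSE-SCM}. Writing $p_i^{(\psi)}=\lim_{t\to\infty}y^{(l)}_i(t)$, the boundary parts differ by $-p_i^{(\psi)}(1-\E^{-t})$ and the $k$-th Sinc coefficients differ by $[y_i(\psi(kh))-y^{(l)}_i(\psi(kh))]+p_i^{(\psi)}\E^{kh}/(1+\E^{kh})$, so that
\[
|\check{y}_i(t)-\hat{y}^{(l)}_i(t)|\leq|p_i^{(\psi)}|+\left(\max_{k}|y_i(\psi(kh))-y^{(l)}_i(\psi(kh))|+|p_i^{(\psi)}|\right)\sum_{k=-M}^{N}|S(k,h)(\psi^{-1}(t))|,
\]
using $|1-\E^{-t}|\leq 1$ and $\E^{kh}/(1+\E^{kh})\leq 1$. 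Both $\max_k|y_i(\psi(kh))-y^{(l)}_i(\psi(kh))|$ and $|p_i^{(\psi)}|=|\lim_{t\to\infty}(y^{(l)}_i(t)-y_i(t))|$ are dominated by $\sup_t|y_i(t)-y^{(l)}_i(t)|$, which Theorem~\ref{thm:SE-Sinc-Nystroem} bounds by $(C+\hat{C}\|A_{lm}^{-1}\|_{\infty})\sqrt{n}\E^{-\sqrt{\pi d\mu n}}$. Multiplying by the Lebesgue-type bound $\sup_{x}\sum_{k=-M}^{N}|S(k,h)(x)|=\OO(\log(M+N+1))=\OO(\log(n+1))$ (recall $M+N+1=\OO(n)$ from~\eqref{eq:def-MN}) then produces the announced $(C+\hat{C}\|A_{lm}^{-1}\|_{\infty})\sqrt{n}\log(n+1)\E^{-\sqrt{\pi d\mu n}}$ contribution.

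The main obstacle I anticipate lies in this second piece: one must confirm that the boundary-value discrepancy $p_i^{(\psi)}$ is genuinely controlled by the Nystr\"{o}m error (so that it inherits, rather than spoils, the factor $\|A_{lm}^{-1}\|_{\infty}$), and that the amplification caused by expanding in the Sinc basis is no worse than the logarithmic Lebesgue constant --- it is exactly here that the extra $\log(n+1)$ enters. Summing the two estimates and taking the maximum over $i=1,\ldots,m$ gives the stated bound.
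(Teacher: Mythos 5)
Your proposal is correct and takes essentially the same route as the paper: the paper likewise inserts the boundary-treated SE-Sinc interpolant of the exact solution (its $\hat{\mathbd{y}}$ with $\mathbd{p}=\lim_{t\to\infty}\mathbd{y}(t)$, which equals $\mathbd{0}$ under your hypotheses, so it coincides with your $\check{y}_i$), verifies the bound~\eqref{eq:f-bound} for $y_i(t)-r_i\E^{-t}$ via the same decomposition $(\tilde{f}-p)(1-\E^{-z})+(\tilde{f}-q)\E^{-z}$ (its Lemma~\ref{lem:hoelder-SE}), and controls the propagated Nystr\"{o}m error through $p_i-p_i^{(\psi)}=\lim_{t\to\infty}(y_i(t)-y_i^{(l)}(t))$ together with Stenger's $\OO(\log(n+1))$ Lebesgue-constant bound. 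The only cosmetic difference is that you track the exponent $\min\{\beta,1\}$ explicitly, whereas the paper isolates this bookkeeping in Lemma~\ref{lem:hoelder-SE}.
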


\subsection{DE-Sinc-collocation method and its error analysis (new result)} \label{sec:DE-SCM}

Here, we approximate $\mathbd{y}^{(l)}$ in~\eqref{eq:ynMMD}
based on Theorem~\ref{thm:DE-Sinc-boundary}.
Then,
we obtain a new approximate solution $\hat{\mathbd{y}}^{(l)}$ defined as
\begin{align}
\hat{\mathbd{y}}^{(l)}(t)
=
\frac{\mathbd{r}+\mathbd{p}^{(\phi)}(\E^t - 1)}{\E^t}
+\sum_{k=-M}^N\Biggl\{ \mathbd{y}^{(l)}(\phi(kh))
-\frac{\mathbd{r}+\mathbd{p}^{(\phi)}\E^{\pi\sinh(kh)}}{1+\E^{\pi\sinh(kh)}}\Biggr\}
S(k,h)(\phi^{-1}(t)),
\label{eq:ynDE-SCM}
\end{align}
where $\mathbd{r}$ is the initial value in~\eqref{eq:ODE}, and
$\mathbd{p}^{(\phi)}$ is given by
\[
 \mathbd{p}^{(\phi)}
=\mathbd{r}+
h\sum_{j=-M}^N \left\{K(\phi(jh))\mathbd{y}^{(l)}(\phi(jh))
+\mathbd{g}(\phi(jh))\right\}\phi'(jh).
\]
In summary, by solving~\eqref{renritsuMMD}, we can obtain the value of
$\mathbd{y}^{(l)}(\phi(jh))$, from which $\hat{\mathbd{y}}^{(l)}(t)$ is
determined through~\eqref{eq:ynDE-SCM}. This procedure is the
DE-Sinc-collocation method.
We provide its error analysis as follows.
The proof is given in Sect.~\ref{sec:proofs}.

\begin{theorem}
\label{thm:DE-Sinc-collocation}
Assume that the assumptions of Theorem~\ref{thm:DE-Sinc-Nystroem}
are fulfilled.
Furthermore, assume that there exists a positive constant $H$
such that
\begin{align*}
\max_{i=1,\,\ldots,\,m}|y_i(z) - r_i|
 &\leq H \left|z\right|^{\alpha},\\
\max_{i=i,\,\ldots,\,m}|y_i(z)|
 &\leq H \left|\E^{-z}\right|^{\beta}
\end{align*}
hold for all $z\in\phi(\domD_d)$.
Then, the error of the approximate solution $\hat{\mathbd{y}}^{(l)}(t)$
in~\eqref{eq:ynDE-SCM}
is estimated as
\[
 \max_{1\leq i\leq m}
\left\{
\sup_{t\in(0,\infty)}
\left|y_i(t) - \hat{y}^{(l)}_i(t)\right|
\right\}
\leq \left(C + \hat{C}\|B_{lm}^{-1}\|_{\infty}\right)
\arsinh(dn/\mu)\log(n+1)\E^{-\pi d n/\arsinh(dn/\mu)},
\]
where $C$ and $\hat{C}$ are positive constants independent of $n$.
\end{theorem}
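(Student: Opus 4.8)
The plan is to insert, between the exact solution and the collocation approximation, the DE-Sinc approximation with a boundary treatment of the \emph{exact} solution $y_i$, and then use the triangle inequality so that the error splits into a genuine approximation error (carrying no factor $\|B_{lm}^{-1}\|_\infty$) and a term that merely propagates the Nystr\"om error. For each component $i$, set
\[
\hat{y}_i(t) = \frac{r_i}{\E^t} + \sum_{k=-M}^N\left\{y_i(\phi(kh)) - \frac{r_i}{1+\E^{\pi\sinh(kh)}}\right\}S(k,h)(\phi^{-1}(t)),
\]
which is the boundary-treated DE-Sinc approximation \eqref{eq:general-DE-Sinc} of $y_i$ formed with the collocation nodes $k=-M,\dots,N$ and the exact boundary values $p=\lim_{t\to\infty}y_i(t)=0$ and $q=y_i(0)=r_i$. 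I would then bound the two terms of
\[
y_i(t) - \hat{y}^{(l)}_i(t) = \bigl(y_i(t) - \hat{y}_i(t)\bigr) + \bigl(\hat{y}_i(t) - \hat{y}^{(l)}_i(t)\bigr)
\]
separately.

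For the first term I would invoke Theorem~\ref{thm:DE-Sinc-boundary} (in the form adapted to the truncation indices $M,N$ of \eqref{eq:def-MN-DE}) with $\tilde f=y_i$. The two extra hypotheses of the present theorem are exactly what verify its assumption: writing $f_i(z)=y_i(z)-r_i\E^{-z}$, the bound $|y_i(z)-r_i|\le H|z|^{\alpha}$ together with $|1-\E^{-z}|=\OO(|z|)$ as $z\to0$ controls $f_i$ near the left end, while $|y_i(z)|\le H|\E^{-z}|^{\beta}$ and $|r_i\E^{-z}|=|r_i||\E^{-z}|$ control it near the right end; since $\mu=\min\{\alpha,\beta\}\le1$ and $|\E^{-z}|\le1$ on $\Re z\ge0$, these combine into $|f_i(z)|\le C_{\ddagger}|z|^{\mu}|\E^{-z}|^{\mu}$, i.e.\ \eqref{eq:f-bound-DE}. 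This yields $\sup_{t}|y_i(t)-\hat{y}_i(t)|\le C\E^{-\pi d n/\arsinh(dn/\mu)}$, with neither $\|B_{lm}^{-1}\|_\infty$ nor a logarithmic factor.

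For the second term I would subtract the two explicit formulas. The boundary parts reduce to $-p^{(\phi)}_i(\E^t-1)/\E^t$, and the coefficient of $S(k,h)(\phi^{-1}(t))$ reduces to $\bigl(y_i(\phi(kh))-y^{(l)}_i(\phi(kh))\bigr)+p^{(\phi)}_i\E^{\pi\sinh(kh)}/(1+\E^{\pi\sinh(kh)})$, where $p^{(\phi)}_i$ is the $i$-th component of $\mathbd{p}^{(\phi)}$. Because $\lim_{t\to\infty}y_i(t)=0$, we have $p^{(\phi)}_i=\lim_{t\to\infty}\bigl(y^{(l)}_i(t)-y_i(t)\bigr)$, so both $|p^{(\phi)}_i|$ and every nodal difference $|y_i(\phi(kh))-y^{(l)}_i(\phi(kh))|$ are bounded by $E:=(C+\hat{C}\|B_{lm}^{-1}\|_\infty)\arsinh(dn/\mu)\E^{-\pi d n/\arsinh(dn/\mu)}$, the Nystr\"om estimate of Theorem~\ref{thm:DE-Sinc-Nystroem}. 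Using $|1-\E^{-t}|\le1$ and $0<\E^{\pi\sinh(kh)}/(1+\E^{\pi\sinh(kh)})<1$, the second term is at most a constant multiple of $E\bigl(1+\sup_{t\in(0,\infty)}\sum_{k=-M}^N|S(k,h)(\phi^{-1}(t))|\bigr)$.

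The crux is therefore the Lebesgue constant $\Lambda:=\sup_{x\in\mathbb{R}}\sum_{k=-M}^N|S(k,h)(x)|$: since the node nearest $x$ contributes $\OO(1)$ while the remaining Sinc functions decay like $1/|x-kh|$, the sum grows only logarithmically in the number of nodes $l=M+N+1$, and \eqref{eq:def-MN-DE}--\eqref{eq:def-h-DE} give $l=\OO(n)$, whence $\Lambda\le C\log(n+1)$. Combining the two terms, absorbing the (smaller) first bound into the second, and taking the maximum over the finitely many components $i$ then gives the asserted estimate. I expect the principal obstacles to be (i) establishing the logarithmic bound on $\Lambda$ rigorously and uniformly in $t$, which is precisely what manufactures the extra $\log(n+1)$ factor separating the collocation rate from the Nystr\"om rate, and (ii) supplying the asymmetric refinement of Theorem~\ref{thm:DE-Sinc-boundary} needed for the first term, since the collocation uses the unbalanced indices \eqref{eq:def-MN-DE} rather than the symmetric $M=N=n$ of the stated theorem; this parallels the passage from the symmetric Theorem~\ref{thm:DE-Sinc} to the asymmetric Theorem~\ref{thm:DE-Sinc-indef} and requires checking that each one-sided truncation error remains $\OO(\E^{-\pi d n/\arsinh(dn/\mu)})$.
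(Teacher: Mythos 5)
Your proposal follows essentially the same route as the paper: the same triangle-inequality splitting through the boundary-treated DE-Sinc approximant of the exact solution (with $p_i=0$), the verification of~\eqref{eq:f-bound-DE} from the two extra hypotheses (the paper's Lemma~\ref{lem:hoelder-DE}), the Nystr\"{o}m estimate of Theorem~\ref{thm:DE-Sinc-Nystroem} for the nodal differences and for $p_i^{(\phi)}$, and a logarithmic Lebesgue-constant bound producing the extra $\log(n+1)$. The two obstacles you flag are dispatched in the paper by quoting Stenger's bound (Lemma~\ref{lem:Stenger}) for the Lebesgue constant and by simply working with the symmetric window $\sum_{j=-n}^{n}$ (with $n=\max\{M,N\}$) throughout the proof rather than reproving Theorem~\ref{thm:DE-Sinc-boundary} for the asymmetric indices.
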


\section{Proof of Theorems~\ref{thm:DE-Sinc} and~\ref{thm:DE-Sinc-boundary}}
\label{sec:proofs-DE-Sinc}

In this section, we prove Theorems~\ref{thm:DE-Sinc}
and~\ref{thm:DE-Sinc-boundary}.

\subsection{Sketch of the proof of Theorem~\ref{thm:DE-Sinc}}

We prove Theorem~\ref{thm:DE-Sinc}
with an explicit form of the constant $C$, i.e.,
we prove the following theorem.

\begin{theorem}
\label{thm:DE-Sinc-with-constant}
Assume that $f$ is analytic in $\phi(\domD_d)$ with $0<d<\pi/2$.
Furthermore, assume that
there exist positive constants $C_{\ddagger}$ and $\mu$ with $\mu\leq 1$
such that~\eqref{eq:f-bound-DE}
holds for all $z\in\phi(\domD_d)$.
Let $h$ be defined as~\eqref{eq:def-h-DE}.
Then, it holds that
\[
 \sup_{t\in(0,\infty)}\left|
  f(t) - \sum_{j=-n}^n f(\phi(jh))S(j,h)(\phi^{-1}(t))
 \right|
\leq C_{\dagger} \E^{-\pi d n/\arsinh(dn/\mu)},
\]
where $C_{\dagger}$ is a positive constant defined as
\begin{equation}
 C_{\dagger} = \frac{2 C_{\ddagger}}{\mu\pi^{1-\mu}}
\left[
\frac{2}{\pi d(1 - \E^{-2\pi d/\arsinh(d/\mu)})\cos^{2\mu}((\pi/2)\sin d)\cos^{1+\mu} d}
+ \frac{\E^{-\pi d (\arsinh(d/\mu) - 1)/\arsinh(d/\mu)}}{\arsinh(d/\mu)(1 + (d/\mu))^{(1-\mu)/2}}
\right].
\label{eq:Cast}
\end{equation}
\end{theorem}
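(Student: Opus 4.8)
The plan is to follow the standard two-step decomposition used in all DE-Sinc convergence proofs, splitting the total error into a \emph{discretization error} (the difference between $f$ and its infinite Sinc series) and a \emph{truncation error} (the tail of the series that is discarded when we keep only $|j|\le n$). Concretely, I would write
\[
f(t)-\sum_{j=-n}^n f(\phi(jh))S(j,h)(\phi^{-1}(t))
= \underbrace{\left(f(t)-\sum_{j=-\infty}^{\infty}\right)}_{\text{discretization}}
+ \underbrace{\sum_{|j|>n}}_{\text{truncation}},
\]
and bound each piece separately, with the explicit constant $C_{\dagger}$ in \eqref{eq:Cast} emerging as the sum of the two resulting constants. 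The two bracketed terms in \eqref{eq:Cast} suggest precisely this split: the first term (with the $1-\E^{-2\pi d/\arsinh(d/\mu)}$ factor and the $\cos$ powers) is the discretization-error constant, while the second term (the cleaner $\E^{-\pi d(\arsinh(d/\mu)-1)/\arsinh(d/\mu)}$ term) is the truncation-error constant.

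For the discretization error, I would invoke the classical contour-integral bound for the Sinc approximation on a strip $\domD_d$: for a function $G=f\circ\phi$ analytic and suitably decaying on $\domD_d$, the discretization error is controlled by $\tfrac{1}{2\sinh(\pi d/h)}\int_{\mathbb{R}}\big(|G(x+\I d)|+|G(x-\I d)|\big)\D x$, and $\tfrac{1}{\sinh(\pi d/h)}\le \tfrac{2\E^{-\pi d/h}}{1-\E^{-2\pi d/h}}$. I would then estimate $|G(x\pm\I d)|=|f(\phi(x\pm\I d))|$ using the growth hypothesis \eqref{eq:f-bound-DE}, i.e.\ $|f(z)|\le C_{\ddagger}|z|^{\mu}|\E^{-z}|^{\mu}$, together with sharp pointwise bounds on $|\phi(x\pm\I d)|$ and $\Re\phi(x\pm\I d)$. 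The factors $\cos^{2\mu}((\pi/2)\sin d)$ and $\cos^{1+\mu}d$ in \eqref{eq:Cast} are exactly the kind of geometric constants that arise from bounding $|\E^{\pi\sinh(x\pm\I d)}|$ and the real part of the logarithm along the horizontal boundary lines $\Im\zeta=\pm d$; producing these requires a careful analysis of how $\sinh(x\pm\I d)=\sinh x\cos d\pm\I\cosh x\sin d$ behaves. Substituting $h=\arsinh(dn/\mu)/n$ turns $\E^{-\pi d/h}$ into the target rate $\E^{-\pi d n/\arsinh(dn/\mu)}$.

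For the truncation error, I would bound the discarded tail $\sum_{|j|>n}|f(\phi(jh))|\,|S(j,h)(\phi^{-1}(t))|$ by exploiting $|S(j,h)|\le 1$ and the double-exponential decay of $f(\phi(jh))$ coming from \eqref{eq:f-bound-DE}: for large $|j|$, $\phi(jh)$ grows double-exponentially on the positive side and decays double-exponentially on the negative side, so in both directions $|f(\phi(jh))|$ is dominated by $C_{\ddagger}\E^{-\mu\pi\sinh(|j|h)/\,\cdots}$-type factors. Summing the geometric-like tail from $j=n+1$ and comparing with an integral yields a bound proportional to $\E^{-\mu\pi\sinh(nh)}$; the choice $\sinh(nh)=dn/\mu$ (equivalently $h=\arsinh(dn/\mu)/n$) makes $\mu\pi\sinh(nh)=\pi d n$, but to match the \emph{discretization} rate $\E^{-\pi d n/\arsinh(dn/\mu)}$ I would factor the truncation bound as $\E^{-\pi d n/\arsinh(dn/\mu)}$ times the residual $\E^{-\pi d(\arsinh(d/\mu)-1)/\arsinh(d/\mu)}$ visible in the second bracket of \eqref{eq:Cast}.

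The main obstacle I anticipate is the discretization-error step, specifically obtaining the clean geometric constants $\cos^{2\mu}((\pi/2)\sin d)$ and $\cos^{1+\mu}d$. This demands sharp, uniform-in-$x$ lower bounds on quantities like $\Re\E^{\pi\sinh(x\pm\I d)}$ and careful control of $|\phi(x\pm\I d)|^\mu$ against $|\E^{-\phi(x\pm\I d)}|^\mu$ so that the boundary integral $\int_{\mathbb{R}}|f(\phi(x\pm\I d))|\D x$ actually converges and evaluates (or is bounded) in closed form; the interplay between the $|z|^{\mu}$ algebraic factor and the double-exponential $|\E^{-z}|^{\mu}$ factor near $x\to-\infty$, where $\phi\to0$, is delicate, and the restriction $d<\pi/2$ is what keeps $\cos d>0$ and $\sin d<1$ so these constants stay finite. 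I expect the truncation estimate and the final assembly (verifying that $C_{\dagger}$ is exactly the stated sum) to be comparatively routine once the boundary bound is in hand.
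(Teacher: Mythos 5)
Your plan coincides with the paper's proof in its essentials: the same split into discretization and truncation errors, with the discretization bound being exactly Lemma~\ref{lem:discretization} (which the paper simply cites from earlier work rather than rederiving via the boundary contour integral as you propose to do), and your tail estimate via $|S(j,h)(x)|\le 1$, Lemma~\ref{lem:log-bound} with $y=0$, and comparison with $\int_{nh}^{\infty}\pi^{\mu}\cosh^{\mu}x\,\E^{-\pi\mu\sinh x}\D x$ being precisely the proof of Lemma~\ref{lem:truncation}, which yields $2C_{\ddagger}\pi^{\mu-1}\E^{-\pi\mu\sinh(nh)}/(\mu h\cosh^{1-\mu}(nh))$.

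The one place where you underestimate the work is the final assembly, which you call ``comparatively routine.'' After substituting $h=\arsinh(dn/\mu)/n$, both bounds still carry $n$-dependent prefactors: the factor $(1-\E^{-2\pi d/h})^{-1}$ in the discretization term, and in the truncation term the combination $\frac{1}{h\cosh^{1-\mu}(nh)}\E^{-\pi\mu\sinh(nh)}$, where $\E^{-\pi\mu\sinh(nh)}=\E^{-\pi dn}$ must be written as the target rate $\E^{-\pi dn/\arsinh(dn/\mu)}$ times the residual $\E^{-\pi dn(1-1/\arsinh(dn/\mu))}$ and that residual must absorb the growing factor $\cosh^{1-\mu}(nh)=(1+(dn/\mu)^2)^{(1-\mu)/2}$. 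To replace all of these by their values at $n=1$ --- which is exactly what produces the stated $C_{\dagger}$ --- the paper proves three monotonicity statements (Propositions~\ref{prop:OkayamaKawai}--\ref{prop:w}): that $x/\arsinh x$ increases, that $(\arsinh x)\sqrt{1+x^2}/x$ is monotone, and, least trivially, that $w(x)=(1+x^2)\exp\{-2\pi x(1-1/\arsinh x)\}$ decreases for $x\ge 0$, which reduces to showing $t^2-t+\tanh t\ge 0$ for $t\ge0$ by two differentiations. Without some argument of this kind your ``sum of the two constants'' is a priori $n$-dependent, so this step needs to be supplied rather than waved through; with it, your outline matches the paper's proof.
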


The proof is organized as follows.
We divide the error into two terms as
\begin{align*}
 \left|
  f(t) - \sum_{j=-n}^n f(\phi(jh))S(j,h)(\phi^{-1}(t))
 \right|
&\leq \left|
  f(t) - \sum_{j=-\infty}^{\infty} f(\phi(jh))S(j,h)(\phi^{-1}(t))
 \right|\\
&\quad +\left|
 \sum_{j=-\infty}^{-n-1} f(\phi(jh))S(j,h)(\phi^{-1}(t))
+\sum_{j=n+1}^{\infty} f(\phi(jh))S(j,h)(\phi^{-1}(t))
 \right|.
\end{align*}
The first term is called the discretization error,
and the second term is called the truncation error.
The discretization error was estimated as follows.

\begin{lemma}[Okayama~{\cite[Lemma~4.16]{OkayamaSinc}}]
\label{lem:discretization}
Assume that $f$ is analytic in $\phi(\domD_d)$ with $0<d<\pi/2$.
Furthermore, assume that
there exist positive constants $C_{\ddagger}$ and $\mu$ with $\mu\leq 1$
such that~\eqref{eq:f-bound-DE}
holds for all $z\in\phi(\domD_d)$.
Then, we have
\[
\sup_{t\in(0,\infty)} \left|
  f(t) - \sum_{j=-\infty}^{\infty} f(\phi(jh))S(j,h)(\phi^{-1}(t))
 \right|
\leq \frac{4 C_{\ddagger} \E^{-\pi d/h}}{\pi^{2-\mu}d \mu (1 - \E^{-2\pi d/h})\cos^{2\mu}((\pi/2)\sin d)\cos^{1+\mu} d}.
\]
\end{lemma}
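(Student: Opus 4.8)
The plan is to pass to the transformed variable and reduce the claim to a boundary integral estimate for the classical Sinc discretization error. Write $g(x) = f(\phi(x))$; since $\phi$ maps $\mathbb{R}$ bijectively onto $(0,\infty)$ and the substitution $x = \phi^{-1}(t)$ turns $S(j,h)(\phi^{-1}(t))$ into $S(j,h)(x)$, the quantity to be bounded equals $\sup_{x\in\mathbb{R}}|g(x) - \sum_{j=-\infty}^{\infty} g(jh)S(j,h)(x)|$. Because $f$ is analytic on $\phi(\domD_d)$, the composition $g$ is analytic on the strip $\domD_d$, so the problem becomes an instance of estimating the discretization error of the untruncated Sinc approximation of a function analytic on $\domD_d$.

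First I would invoke the standard contour-integral representation of the Sinc discretization error: for $g$ analytic on $\domD_d$ and decaying at both ends of the strip,
\[
g(x) - \sum_{j=-\infty}^{\infty} g(jh)S(j,h)(x) = \frac{\sin(\pi x/h)}{2\pi\I}\int_{\partial\domD_d}\frac{g(\zeta)}{(\zeta-x)\sin(\pi\zeta/h)}\D\zeta,
\]
the contour being the two horizontal lines $\Im\zeta = \pm d$. On these lines one has the elementary bounds $|\sin(\pi x/h)|\le 1$, $|\zeta - x|\ge d$, and $|\sin(\pi\zeta/h)|\ge\sinh(\pi d/h)$, together with $1/\sinh(\pi d/h) = 2\E^{-\pi d/h}/(1-\E^{-2\pi d/h})$. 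Substituting these gives
\[
\sup_{x\in\mathbb{R}}\Bigl|g(x) - \sum_{j=-\infty}^{\infty} g(jh)S(j,h)(x)\Bigr|\le \frac{\E^{-\pi d/h}}{\pi d(1-\E^{-2\pi d/h})}\,N_1(g,d),\quad N_1(g,d) = \int_{-\infty}^{\infty}\bigl(|g(x+\I d)| + |g(x-\I d)|\bigr)\D x.
\]
Matching this against the target bound shows it suffices to prove $N_1(g,d)\le 4C_{\ddagger}/\bigl(\pi^{1-\mu}\mu\cos^{2\mu}((\pi/2)\sin d)\cos^{1+\mu}d\bigr)$, i.e., by the symmetry of the strip, to bound each line integral $\int_{-\infty}^{\infty}|f(\phi(x\pm\I d))|\D x$ by half of this.

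The crux is therefore this boundary integral estimate, and it is the step I expect to be the main obstacle. Using the growth condition~\eqref{eq:f-bound-DE} and $\E^{-\phi(w)} = 1/(1+\E^{\pi\sinh w})$, the integrand factors as
\[
|f(\phi(w))|\le C_{\ddagger}|\phi(w)|^{\mu}\,\E^{-\mu\Re\phi(w)}= C_{\ddagger}\left|\frac{\log(1+\E^{\pi\sinh w})}{1+\E^{\pi\sinh w}}\right|^{\mu},\quad w = x\pm\I d.
\]
The double-exponential structure makes this decay at both ends of the line: as $x\to+\infty$ the factor $\E^{-\mu\Re\phi}$ decays double-exponentially because $\Re\phi(x\pm\I d)\sim\pi\sinh x\cos d\to+\infty$, while as $x\to-\infty$ the factor $|\phi(w)|^{\mu}$ decays double-exponentially because $\phi(w)\to 0$. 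I would control the intermediate region by writing $\E^{\pi\sinh w} = \E^{\sigma}(\cos\theta + \I\sin\theta)$ with $\sigma = \pi\sinh x\cos d$ and $\theta = \pi\cosh x\sin d$, so that $|1+\E^{\pi\sinh w}|^{2} = 1 + 2\E^{\sigma}\cos\theta + \E^{2\sigma}$; the minimum of this expression, attained near $x=0$ (where $\sigma=0$ and $\theta = \pi\sin d$), equals $4\cos^{2}((\pi/2)\sin d)$, which is the source of the $\cos^{2\mu}((\pi/2)\sin d)$ in the denominator. The change of variables from $x$ to $\sigma$, whose Jacobian carries the factor $\pi\cos d\cosh x$, together with the $|\phi(w)|^{\mu}$ numerator, accounts for the remaining $\cos^{1+\mu}d$ and the power of $\pi$.

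Carrying out this estimate carefully—establishing a pointwise lower bound on $|1+\E^{\pi\sinh w}|$ that is uniform in $x$, bounding $|\log(1+\E^{\pi\sinh w})|$, and then integrating the resulting single-variable expression to recover exactly the stated constant—is the technical heart of the argument; everything else is the routine reduction above. Combining the boundary estimate with the discretization-error inequality then yields the claimed bound.
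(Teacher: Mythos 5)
Your proposal is correct and follows essentially the same route as the proof in the source this paper cites for the lemma (the paper itself only quotes Okayama's Lemma~4.16 without reproving it): the characteristic-function contour representation of the discretization error, reduction to the boundary norm $N_1(f\circ\phi,d)$ with prefactor $\E^{-\pi d/h}/(\pi d(1-\E^{-2\pi d/h}))$, and boundary bounds of precisely the type the paper records as Lemmas~\ref{lem:log-bound} and~\ref{lem:Lemma422}. The boundary estimate you left as a sketch does close to the exact constant: those two inequalities give $|f(\phi(x\pm\I d))|\le C_{\ddagger}\pi^{\mu}\cosh^{\mu}x\,\bigl(\cos^{2\mu}((\pi/2)\sin d)\cos^{\mu}d\,(1+\E^{-\sigma})^{\mu}(1+\E^{\sigma})^{\mu}\bigr)^{-1}$ with $\sigma=\pi\sinh(x)\cos d$, and since $(1+\E^{-\sigma})(1+\E^{\sigma})\ge\E^{|\sigma|}$, $\cosh^{\mu}x\le\cosh x$, and $\int_{0}^{\infty}\cosh x\,\E^{-\mu\pi\cos(d)\sinh x}\D x=1/(\mu\pi\cos d)$, each of the two line integrals is at most $2C_{\ddagger}\pi^{\mu-1}/\bigl(\mu\cos^{2\mu}((\pi/2)\sin d)\cos^{1+\mu}d\bigr)$, yielding exactly the bound $N_1\le 4C_{\ddagger}/\bigl(\pi^{1-\mu}\mu\cos^{2\mu}((\pi/2)\sin d)\cos^{1+\mu}d\bigr)$ that your reduction required.
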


We estimate the truncation error as follows.
The proof is given in Sect.~\ref{sec:truncation}.

\begin{lemma}
\label{lem:truncation}
Assume that
there exist positive constants $C_{\ddagger}$ and $\mu$ with $\mu\leq 1$
such that~\eqref{eq:f-bound-DE}
holds for all $z\in\phi(\domD_d)$.
Then, we have
\[
\sup_{t\in(0,\infty)}\left|
 \sum_{j=-\infty}^{-n-1} f(\phi(jh))S(j,h)(\phi^{-1}(t))
+\sum_{j=n+1}^{\infty} f(\phi(jh))S(j,h)(\phi^{-1}(t))
 \right|
\leq \frac{2 C_{\ddagger}\pi^{\mu-1} \E^{-\pi\mu\sinh(nh)}}{\mu h \cosh^{1-\mu}(nh)}.
\]
\end{lemma}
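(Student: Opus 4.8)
The plan is to bound the Sinc functions trivially and reduce everything to the tail sums of the sampled values of $f$. First I would use $|S(j,h)(\phi^{-1}(t))|\le 1$, which holds uniformly in $t$ because $|\sin(\pi u)/(\pi u)|\le 1$, to obtain
\[
\sup_{t\in(0,\infty)}\left|\sum_{|j|>n} f(\phi(jh))S(j,h)(\phi^{-1}(t))\right|
\le \sum_{j=n+1}^{\infty}|f(\phi(jh))| + \sum_{j=-\infty}^{-n-1}|f(\phi(jh))|.
\]
Applying the assumed bound~\eqref{eq:f-bound-DE} at the real points $z=\phi(jh)>0$ gives $|f(\phi(jh))|\le C_{\ddagger}\phi(jh)^{\mu}\E^{-\mu\phi(jh)}$, so the task becomes estimating the two tails of $G(x)=\phi(x)^{\mu}\E^{-\mu\phi(x)}$. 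Since a suitable majorant will be monotone on $[nh,\infty)$, I would replace each tail sum by an integral using $\sum_{j=n+1}^{\infty}\tilde G(jh)\le \frac1h\int_{nh}^{\infty}\tilde G(x)\D x$ for a decreasing majorant $\tilde G$; this is exactly the mechanism that produces the factor $1/h$ in the claimed bound.

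For the positive tail I would inject the hyperbolic factor through the elementary inequality $\phi(x)\le\pi\cosh x$ for $x\ge 0$. This follows from $1+\E^{\pi\sinh x}\le\E^{\pi\cosh x}$: writing $\E^{\pi\cosh x}=\E^{\pi\sinh x}\E^{\pi\E^{-x}}\ge\E^{\pi\sinh x}(1+\pi\E^{-x})=\E^{\pi\sinh x}+\pi\E^{\pi\sinh x-x}$ and noting $\pi\sinh x-x\ge(\pi-1)x\ge 0$ for $x\ge 0$, the right-hand side is at least $\E^{\pi\sinh x}+\pi\ge 1+\E^{\pi\sinh x}$. Combined with $\E^{-\phi(x)}=(1+\E^{\pi\sinh x})^{-1}\le\E^{-\pi\sinh x}$, this yields $G(x)\le\pi^{\mu}\cosh^{\mu}x\,\E^{-\mu\pi\sinh x}=:\tilde G(x)$, which is decreasing for $x>0$. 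Then I would bound $\cosh^{\mu}x=\cosh^{\mu-1}x\cdot\cosh x\le\cosh^{\mu-1}(nh)\cosh x$ on $[nh,\infty)$ (using $\mu-1\le 0$) and integrate exactly, since $\int_{nh}^{\infty}\cosh x\,\E^{-\mu\pi\sinh x}\D x=\E^{-\mu\pi\sinh(nh)}/(\mu\pi)$. This gives a positive-tail bound of $C_{\ddagger}\pi^{\mu-1}\E^{-\pi\mu\sinh(nh)}/(\mu h\cosh^{1-\mu}(nh))$, precisely one half of the target.

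For the negative tail the estimate is easier, and in fact smaller: for $x<0$ I would use $\phi(x)=\log(1+\E^{\pi\sinh x})\le\E^{\pi\sinh x}$ together with $\E^{-\mu\phi(x)}\le 1$ to get $G(x)\le\E^{\mu\pi\sinh x}$, so after reindexing $j=-k$ the tail is bounded by $C_{\ddagger}\sum_{k=n+1}^{\infty}\E^{-\mu\pi\sinh(kh)}\le(C_{\ddagger}/h)\int_{nh}^{\infty}\E^{-\mu\pi\sinh x}\D x\le C_{\ddagger}\E^{-\mu\pi\sinh(nh)}/(\mu\pi h\cosh(nh))$, where the last step again uses $\cosh x\ge\cosh(nh)$ for $x\ge nh$. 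Since $(\pi\cosh(nh))^{\mu}\ge 1$, this is dominated by the same half-bound as the positive tail, and adding the two halves yields the stated estimate with the factor $2$. The only non-routine ingredient is the inequality $\phi(x)\le\pi\cosh x$, which is what converts the polynomial prefactor $\phi(x)^{\mu}$ into the $\cosh^{1-\mu}(nh)$ appearing in the denominator; the Sinc bound, the integral comparison, and the final integration are all standard.
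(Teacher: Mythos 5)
Your proof is correct and follows essentially the same route as the paper's: bound the Sinc functions by $1$, majorize the sampled values by $C_{\ddagger}\pi^{\mu}\cosh^{\mu}(jh)\,\E^{-\pi\mu\sinh(jh)}$, compare the tail sums with an integral, and use $\cosh^{\mu}x\le\cosh x/\cosh^{1-\mu}(nh)$ before integrating exactly. The only difference is that the paper derives the majorant from the symmetric estimate of Lemma~\ref{lem:log-bound} (applied with $y=0$) and folds both tails into twice the positive one, whereas you prove the real-axis inequality $\phi(x)\le\pi\cosh x$ directly and handle the negative tail separately via $\log(1+u)\le u$; both routes give the identical constant.
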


Combining these two lemmas, we prove Theorem~\ref{thm:DE-Sinc-with-constant}
in Sect.~\ref{sec:DE-Sinc-with-constant}.
This completes the proof.

\subsection{Proof of Lemma~\ref{lem:truncation}}
\label{sec:truncation}

To prove Lemma~\ref{lem:truncation},
the following inequality is useful.

\begin{lemma}[Okayama~{\cite[Lemma~4.15]{OkayamaSinc}}]
\label{lem:log-bound}
For all real numbers $x$ and $y$ with $|y|<\pi/2$,
we have
\[
 |\log(1 + \E^{\pi\sinh(x+\I y)})|
\leq \frac{1}{\cos((\pi/2)\sin y)\cos y}\cdot
\frac{\pi\cosh x}{1+\E^{-\pi\sinh(x)\cos y}}.
\]
\end{lemma}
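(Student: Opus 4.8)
The plan is to prove the bound through an integral representation of $\log(1 + \E^{\pi\sinh\zeta})$ that makes the decay as $\Re\zeta\to-\infty$ automatic, and then to extract the three structural features of the right-hand side---the factor $\pi\cosh x$, the factor $1/\cos y$, and the delicate factor $1/\cos((\pi/2)\sin y)$---from the numerator, a change of variables, and the behaviour of the integrand near the imaginary axis, respectively.

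First I would fix $\zeta = x + \I y$ with $|y|<\pi/2$ and record the elementary identities $\Re(\pi\sinh\zeta) = \pi\sinh x\cos y$ and $\Im(\pi\sinh\zeta) = \pi\cosh x\sin y$, together with the uniform bounds $|\sinh(x+\I y)|\le\cosh x$ and $|\cosh(x+\I y)|\le\cosh x$ (both following from $\cos^2 y\le 1$). Since $|y|<\pi/2$ forces $\cos y>0$, one checks that $1 + \E^{\pi\sinh\zeta}$ never vanishes on the strip (a zero would require $x=0$ and $\sin y = \pm 1$), so $G(\zeta):=\log(1 + \E^{\pi\sinh\zeta})$ is analytic there and tends to $0$ as $x\to-\infty$ along the horizontal line $\Im\zeta = y$. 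This yields the representation
\[
 G(x + \I y) = \int_{-\infty}^{x}\frac{\pi\cosh(t + \I y)}{1 + \E^{-\pi\sinh(t + \I y)}}\D t,
\]
whence, writing $u_t = \pi\sinh t\cos y$ and $v_t = \pi\cosh t\sin y$ for the real and imaginary parts of $\pi\sinh(t+\I y)$,
\[
 |G(x + \I y)| \le \int_{-\infty}^{x}\frac{\pi\cosh t}{|1 + \E^{-\pi\sinh(t + \I y)}|}\D t .
\]

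The heart of the argument is a lower bound for the denominator $|1 + \E^{-\pi\sinh(t+\I y)}|$ that is uniform in $t$ yet sharp enough to reproduce the stated constant. The basic tool is the factorisation $1 + \E^{-\omega} = 2\E^{-\omega/2}\cosh(\omega/2)$, which gives $|1 + \E^{-\omega}|\ge 2\E^{-(\Re\omega)/2}|\cos((\Im\omega)/2)|$ for every $\omega$; applied with $\omega = \pi\sinh(t + \I y)$ this controls the denominator when $u_t$ is close to $0$. The crucial observation is that the factor $\cos((\pi/2)\sin y)$ arises precisely at $t = 0$, where $u_t = 0$ and $v_t = \pi\sin y$, so that $|1 + \E^{-\pi\sinh(\I y)}| = |1 + \E^{-\I\pi\sin y}| = 2\cos((\pi/2)\sin y)$. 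Away from the imaginary axis the factorisation bound degrades (its cosine factor oscillates and vanishes), but there $|u_t|$ is large and one instead uses the cruder estimate $|1 + \E^{-\pi\sinh(t+\I y)}|\ge |\E^{-u_t} - 1|$, which is bounded below by a constant for $u_t\ge 0$ and grows like $\E^{-u_t}$ as $u_t\to-\infty$; the latter growth is exactly what preserves the double-exponential decay carried by $1/(1 + \E^{-u})$ in the target. Combining the two regimes by a case split on the sign and size of $u_t$, and performing the substitution $u_t = \pi\sinh t\cos y$ (so that $\pi\cosh t\,\D t = \D u_t/\cos y$, which is where the isolated factor $1/\cos y$ is produced), the integral is bounded by $\frac{\pi\cosh x}{\cos((\pi/2)\sin y)\cos y\,(1 + \E^{-\pi\sinh x\cos y})}$; the residual factor $\pi\cosh x$ comes from the numerator bound $|\cosh(t+\I y)|\le\cosh t$ integrated up to $x$.

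I expect the main obstacle to be the denominator estimate, specifically stitching the two regimes together so that no factor larger than $1/\cos((\pi/2)\sin y)$ survives. The naive factorisation bound alone overshoots, since on the horizontal contour the imaginary part $v_t = \pi\cosh t\sin y$ runs through odd multiples of $\pi$ and makes $\cos(v_t/2)$ vanish at points where $|u_t|$ is large; the whole difficulty is to show that at every such bad point the competing exponential factor $\E^{\pm u_t}$ more than compensates, so that the worst case genuinely occurs near $t = 0$ and contributes only the factor $2\cos((\pi/2)\sin y)$. Establishing this quantitatively---rather than merely asymptotically---is what forces the explicit piecewise treatment of the integrand and accounts for the particular form of the constant.
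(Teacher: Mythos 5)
Your overall architecture is the natural one, and its first steps are sound: the representation $\log(1+\E^{\pi\sinh\zeta})=\int_{-\infty}^{x}\pi\cosh(t+\I y)/(1+\E^{-\pi\sinh(t+\I y)})\D t$ (the integrand is the derivative, the function is analytic and vanishes as $t\to-\infty$ on the horizontal line), the numerator bound $|\cosh(t+\I y)|\le\cosh t$, and the substitution $u_t=\pi\sinh t\cos y$ producing the factor $1/\cos y$. (For context: the paper itself gives no proof of this lemma, importing it verbatim from Okayama's cited work, so your sketch must stand on its own.) It does not, because of two genuine gaps, sitting exactly at the two places you flag as delicate. \textbf{Gap 1 (the denominator).} Your stitching rests on the claims that the worst case occurs near $t=0$ and that away from the imaginary axis $|u_t|$ is large; both fail when $|y|$ is close to $\pi/2$. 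At the interior point $t_*>0$ with $\cosh t_*=1/|\sin y|$ one has $v_{t_*}=\pi\cosh t_*\sin y=\pm\pi$, so the factorisation bound degenerates completely, while $u_{t_*}=\pi\cos^2 y/|\sin y|$ is arbitrarily small; there $|1+\E^{-\pi\sinh(t_*+\I y)}|=1-\E^{-u_{t_*}}\approx \pi\cos^2 y/|\sin y|$, which is of the \emph{same} small order as $\cos((\pi/2)\sin y)$. So the denominator genuinely degenerates away from $t=0$ (and again near every $t$ with $v_t$ an odd multiple of $\pi$); likewise your assertion that $|\E^{-u_t}-1|$ is ``bounded below by a constant for $u_t\ge 0$'' fails as $u_t\to 0^{+}$. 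What the argument actually requires is the uniform-in-$t$ lower bound $|1+\E^{-\pi\sinh(t+\I y)}|\ge(1+\E^{-\pi\sinh(t)\cos y})\cos((\pi/2)\sin y)$ --- precisely the second inequality of Lemma~\ref{lem:Lemma422} already quoted in this paper --- and your two-regime patchwork, which you concede you have not carried out quantitatively, is never shown to produce the factor $(1+\E^{-u_t})$ and the constant $\cos((\pi/2)\sin y)$ simultaneously and uniformly.

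\textbf{Gap 2 (the endgame).} Even granting that uniform bound, the substitution yields
\[
 |\log(1+\E^{\pi\sinh(x+\I y)})|
 \le \frac{1}{\cos y\,\cos((\pi/2)\sin y)}\int_{-\infty}^{v}\frac{\D u}{1+\E^{-u}}
 = \frac{\log(1+\E^{v})}{\cos y\,\cos((\pi/2)\sin y)},
 \qquad v=\pi\sinh x\cos y,
\]
which is not the stated estimate. Your explanation that the residual factor $\pi\cosh x$ ``comes from the numerator bound integrated up to $x$'' cannot be made literal, since $\int_{-\infty}^{x}\cosh t\D t$ diverges; convergence is supplied by the denominator, and the honest outcome of the computation is $\log(1+\E^{v})$, not $\pi\cosh x/(1+\E^{-v})$. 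The missing bridge is the inequality $(1+\E^{-v})\log(1+\E^{v})\le\sqrt{\pi^2+v^2}$ for all real $v$, which suffices because $|v|\le\pi\sinh|x|$ gives $\sqrt{\pi^2+v^2}\le\pi\cosh x$; it is true (the left-hand side is increasing, with derivative $1-\E^{-v}\log(1+\E^{v})>0$, is below $2\log 2<\pi$ for $v\le 0$, and its excess over $v$ decays exponentially for $v\ge0$), but it is a separate calculus lemma that your sketch never identifies, let alone proves. In sum: an accurate road map whose two load-bearing estimates are both left unestablished, one of them supported by a quantitatively false heuristic.
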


Using this lemma, we prove Lemma~\ref{lem:truncation} as follows.

\begin{proof}
Put $F(x)=f(\phi(x))$ for short.
Because $|S(j,h)(x)|\leq 1$ for all $x\in\mathbb{R}$,
it holds that
\begin{align}
 \left|
 \sum_{j=-\infty}^{-n-1} F(jh)S(j,h)(\phi^{-1}(t))
+\sum_{j=n+1}^{\infty} F(jh)S(j,h)(\phi^{-1}(t))
 \right|
&\leq \sum_{j=-\infty}^{-n-1} |F(jh)||S(j,h)(\phi^{-1}(t))|
+\sum_{j=n+1}^{\infty} |F(jh)||S(j,h)(\phi^{-1}(t))|\nonumber\\
&\leq \sum_{j=-\infty}^{-n-1} |F(jh)|
+\sum_{j=n+1}^{\infty} |F(jh)|.
\label{leq:truncation-first}
\end{align}
Furthermore, from~\eqref{eq:f-bound-DE}, using Lemma~\ref{lem:log-bound}
with $y=0$, we have
\[
 |F(x)|=|f(\phi(x))|\leq C_{\ddagger} |\log(1 + \E^{\pi\sinh x})|^{\mu}
\left|\frac{1}{1+\E^{\pi\sinh x}}\right|^{\mu}
\leq C_{\ddagger}
\frac{(\pi\cosh x)^{\mu}}{(1+\E^{-\pi\sinh x})^{\mu}(1+\E^{\pi\sinh x})^{\mu}}.
\]
Therefore, the right-hand side of~\eqref{leq:truncation-first}
is further bounded as
\begin{align*}
 \sum_{j=-\infty}^{-n-1} |F(jh)|
+\sum_{j=n+1}^{\infty} |F(jh)|
&\leq\sum_{j=-\infty}^{-n-1}
C_{\ddagger}
\frac{(\pi\cosh(jh))^{\mu}}{(1+\E^{-\pi\sinh(jh)})^{\mu}(1+\E^{\pi\sinh(jh)})^{\mu}}
+\sum_{j=n+1}^{\infty}
C_{\ddagger}
\frac{(\pi\cosh(jh))^{\mu}}{(1+\E^{-\pi\sinh(jh)})^{\mu}(1+\E^{\pi\sinh(jh)})^{\mu}}\\
&=\frac{2C_{\ddagger}}{h}\cdot h\sum_{j=n+1}^{\infty}
\frac{\pi^{\mu}\cosh^{\mu}(jh)\E^{-\pi\mu\sinh(jh)}}{(1+\E^{-\pi\sinh(jh)})^{2\mu}}\\
&\leq\frac{2C_{\ddagger}}{h}\cdot h\sum_{j=n+1}^{\infty}
\frac{\pi^{\mu}\cosh^{\mu}(jh)\E^{-\pi\mu\sinh(jh)}}{(1+0)^{2\mu}}\\
&\leq \frac{2C_{\ddagger}}{h}\int_{nh}^{\infty}
\pi^{\mu}\cosh^{\mu} x\E^{-\pi\mu\sinh x}\D x.
\end{align*}
Here, because $0<\mu\leq 1$, it holds for $x\geq nh$ that
\[
 \cosh^{\mu} x = \frac{\cosh x}{\cosh^{1-\mu} x}
\leq \frac{\cosh x}{\cosh^{1-\mu}(nh)},
\]
from which we have
\begin{align*}
\frac{2C_{\ddagger}}{h}\int_{nh}^{\infty}
\pi^{\mu}\cosh^{\mu} x\E^{-\pi\mu\sinh x}\D x
\leq\frac{2C_{\ddagger}}{h}\int_{nh}^{\infty}
\pi^{\mu}\frac{\cosh x}{\cosh^{1-\mu}(nh)}\E^{-\pi\mu\sinh x}\D x
=\frac{2C_{\ddagger}\pi^{\mu-1}}{\mu h\cosh^{1-\mu}(nh)}\E^{-\pi\mu\sinh(nh)}.
\end{align*}
This is the desired estimate.
\end{proof}

\subsection{Proof of Theorem~\ref{thm:DE-Sinc-with-constant}}
\label{sec:DE-Sinc-with-constant}

To prove Theorem~\ref{thm:DE-Sinc-with-constant},
we prepare the following three propositions.

\begin{proposition}[Okayama and Kawai~{\cite[Proposition~7]{OkayamaKawai}}]
\label{prop:OkayamaKawai}
Let $\tilde{q}$ be a function defined by
\[
 \tilde{q}(x) = \frac{x}{\arsinh x}.
\]
Then, $\tilde{q}(x)$ monotonically increases for $x\geq 0$.
\end{proposition}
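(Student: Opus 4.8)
The plan is to verify directly that the derivative $\tilde{q}'(x)$ is nonnegative for $x>0$, after noting that $\tilde{q}$ extends continuously to the origin with value $\tilde{q}(0)=1$, since $\arsinh x = x + \OO(x^3)$ near $0$ makes the indeterminate form $0/0$ tend to $1$. Recalling that $(\arsinh x)' = 1/\sqrt{x^2+1}$, the quotient rule gives
\[
\tilde{q}'(x) = \frac{\arsinh x - x/\sqrt{x^2+1}}{(\arsinh x)^2},
\]
so the sign of $\tilde{q}'(x)$ coincides with the sign of the numerator. Thus it suffices to show that $g(x) := \arsinh x - x/\sqrt{x^2+1}$ satisfies $g(x)\geq 0$ for all $x\geq 0$.

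To establish this, I would first observe that $g(0)=0$, and then differentiate $g$, using $(x/\sqrt{x^2+1})' = 1/(x^2+1)^{3/2}$, to obtain
\[
g'(x) = \frac{1}{\sqrt{x^2+1}} - \frac{1}{(x^2+1)^{3/2}} = \frac{x^2}{(x^2+1)^{3/2}} \geq 0.
\]
Hence $g$ is nondecreasing on $[0,\infty)$, and combined with $g(0)=0$ this yields $g(x)\geq 0$ throughout. Consequently $\tilde{q}'(x)\geq 0$ on $(0,\infty)$, which together with continuity at the origin shows that $\tilde{q}$ increases monotonically for $x\geq 0$.

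There is no genuine obstacle here, as the argument is an elementary one-variable calculus computation; the only point requiring mild care is the removable singularity at $x=0$, handled by continuity. An equivalent and perhaps marginally cleaner route would be to examine the logarithmic derivative $(\log\tilde{q})'(x) = 1/x - 1/(\sqrt{x^2+1}\,\arsinh x)$ for $x>0$, whose nonnegativity reduces to exactly the same auxiliary inequality $\arsinh x \geq x/\sqrt{x^2+1}$, so the two approaches hinge on an identical estimate.
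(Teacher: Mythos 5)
Your proof is correct: the derivative computations are all accurate, the key inequality $\arsinh x \geq x/\sqrt{x^2+1}$ is established cleanly by monotonicity of $g$ from $g(0)=0$, and the removable singularity at the origin is handled properly. Note, however, that the paper itself offers no proof of this proposition --- it is imported verbatim from Okayama and Kawai --- so there is nothing to compare against directly. It is worth pointing out that the paper's proof of the companion Proposition~\ref{prop:p} uses the substitution $t=\arsinh x$, and the same device here is even slicker: with $x=\sinh t$ one gets $\tilde{q}(x)=\sinh(t)/t=\sum_{k\geq 0}t^{2k}/(2k+1)!$, a power series with nonnegative coefficients in $t\geq 0$, which is manifestly increasing, and since $t=\arsinh x$ is itself increasing the claim follows with no quotient-rule bookkeeping. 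Your route is equally valid and entirely elementary; the substitution route merely buys a shorter verification and matches the style of the adjacent proofs in the paper.
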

\begin{proposition}
\label{prop:p}
Let $\tilde{p}$ be a function defined by
\[
 \tilde{p}(x)
 = \frac{\arsinh x}{x}\sqrt{1 + x^2}.
\]
Then, $\tilde{p}(x)$ monotonically decreases for $x\geq 0$.
\end{proposition}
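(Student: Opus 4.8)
The plan is to prove the (slightly stronger) statement that $\tilde{p}$ is strictly decreasing on $(0,\infty)$ by showing $\tilde{p}'(x)<0$ there; since $\tilde{p}$ extends continuously to the origin with $\tilde{p}(0)=1$, this upgrades to monotone decrease on all of $[0,\infty)$. Because $\tilde{p}(x)>0$ for $x>0$, the sign of $\tilde{p}'$ coincides with the sign of $(\log\tilde{p})'$, and logarithmic differentiation is the most economical tool, as it turns the quotient into a sum. Differentiating $\log\tilde{p}$ and using $(\arsinh x)'=1/\sqrt{1+x^2}$ produces a single transcendental term $1/(\arsinh x\,\sqrt{1+x^2})$ together with purely rational terms in $x$; the whole question then collapses to one elementary inequality involving $\arsinh x$.

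First I would compute $(\log\tilde{p})'$, place the rational part over the common denominator $x(1+x^2)$, and clear the positive denominators on both sides. After simplification the condition $(\log\tilde{p})'(x)\le 0$ becomes equivalent to
\[
 \frac{x\sqrt{1+x^2}}{1+2x^2}\le\arsinh x,\qquad x>0,
\]
in which $\tilde{p}$ no longer appears. This reduction is routine; the genuine content of the proposition is concentrated in this scalar inequality.

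To establish the inequality I would introduce the auxiliary function
\[
 \eta(x)=\arsinh x-\frac{x\sqrt{1+x^2}}{1+2x^2},
\]
observe that $\eta(0)=0$, and show $\eta'(x)\ge 0$ for $x>0$, so that $\eta\ge 0$ follows by integration. The decisive computation is the derivative of the algebraic term: applying the quotient rule, using $(x\sqrt{1+x^2})'=(1+2x^2)/\sqrt{1+x^2}$, the numerator that arises is $(1+2x^2)^2-4x^2(1+x^2)$, which collapses to the constant $1$. Hence
\[
 \eta'(x)=\frac{1}{\sqrt{1+x^2}}\left(1-\frac{1}{(1+2x^2)^2}\right),
\]
which is manifestly nonnegative, and in fact strictly positive for $x>0$ since $(1+2x^2)^2\ge 1$. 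This yields $\eta\ge 0$, hence the displayed inequality, hence $\tilde{p}'\le 0$, completing the argument.

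I expect the only delicate step to be the differentiation of $x\sqrt{1+x^2}/(1+2x^2)$ and, above all, the recognition that the resulting numerator $(1+2x^2)^2-4x^2(1+x^2)$ telescopes to $1$; it is precisely this cancellation that renders the sign of $\eta'$ transparent. Everything else---the logarithmic differentiation, the clearing of denominators, and the harmless endpoint behaviour at $x=0$---is elementary, and the template used here (an auxiliary function vanishing at the origin whose derivative is sign-definite after an algebraic cancellation) is exactly the one natural for the companion Proposition~\ref{prop:OkayamaKawai}.
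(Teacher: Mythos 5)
There is a genuine problem here, and it is worth separating the paper's typo from your error. The function $\tilde{p}$ as defined is in fact monotonically \emph{increasing} on $[0,\infty)$, not decreasing: for example $\tilde{p}(1)=\sqrt{2}\,\arsinh 1\approx 1.25$ while $\tilde{p}(2)=\tfrac{\sqrt{5}}{2}\arsinh 2\approx 1.61$, and $\tilde{p}(x)\sim\log(2x)\to\infty$ as $x\to\infty$. The word ``decreases'' in the statement is a slip; the paper's own proof substitutes $t=\arsinh x$ to get $\tilde{p}(x)=t/\tanh t$ and shows that this has derivative $(\cosh t\sinh t-t)/\sinh^2 t\ge 0$, i.e.\ it proves \emph{increase}, which is exactly what is used later: the bound $1/\tilde{p}(dn/\mu)\le 1/\tilde{p}(d/\mu)$ in the proof of Theorem~\ref{thm:DE-Sinc-with-constant} requires $\tilde{p}(dn/\mu)\ge\tilde{p}(d/\mu)$ for $n\ge 1$.

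Your argument therefore cannot be right, since it claims to establish the false direction, and the flaw is localized in the reduction step. With $\tilde{p}(x)=\arsinh x\cdot\sqrt{1+x^2}/x$ one has
\[
(\log\tilde{p})'(x)=\frac{1}{\arsinh x\,\sqrt{1+x^2}}-\frac{1}{x}+\frac{x}{1+x^2}
=\frac{1}{\arsinh x\,\sqrt{1+x^2}}-\frac{1}{x(1+x^2)},
\]
so $(\log\tilde{p})'(x)\le 0$ is equivalent to $x\sqrt{1+x^2}\le\arsinh x$, which fails for every $x>0$ (already at $x=1$: $\sqrt{2}>\log(1+\sqrt{2})$); there is no way for the factor $1+2x^2$ in your displayed inequality to appear. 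The inequality you actually reduce to, $x\sqrt{1+x^2}/(1+2x^2)\le\arsinh x$, is the condition for the \emph{different} function $\arsinh x/(x\sqrt{1+x^2})$ to be decreasing---you have in effect moved the factor $\sqrt{1+x^2}$ from the numerator to the denominator when differentiating $\log\tilde{p}$. Your auxiliary computation (the telescoping of $(1+2x^2)^2-4x^2(1+x^2)$ to $1$) is correct and does prove that inequality, but it establishes monotonicity of the wrong function. The repair is to prove the correct (and actually needed) statement: $(\log\tilde{p})'(x)\ge 0$ is equivalent to $\arsinh x\le x\sqrt{1+x^2}$, which is immediate from $\arsinh x\le x$; alternatively, follow the paper's substitution $t=\arsinh x$ and the elementary bound $\cosh t\sinh t\ge\sinh t\ge t$.
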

\begin{proof}
Putting $t=\arsinh x$, we have $\tilde{p}(x)=\tilde{r}(t)$, where
\[
 \tilde{r}(t)=\frac{t}{\tanh t}.
\]
Differentiating $\tilde{r}(t)$, we have
\[
 \tilde{r}'(t) = \frac{\cosh t\sinh t - t}{\sinh^2 t}
\geq \frac{1\cdot\sinh t - t}{\sinh^2 t}\geq 0
\]
for $t\geq 0$. Thus, it holds for $x\geq 0$ that
\[
 \tilde{p}'(x)=\tilde{r}'(\arsinh x)\left(\arsinh x\right)'
=\tilde{r}'(\arsinh x)\frac{1}{\sqrt{1+x^2}}\geq 0,
\]
which is to be demonstrated.
\end{proof}
\begin{proposition}
\label{prop:w}
Let $w$ be a function defined by
\[
 w(x) = (1 + x^2)\exp\left\{-2\pi x\left(1 - \frac{1}{\arsinh x}\right)\right\}.
\]
Then, $w(x)$ monotonically decreases for $x\geq 0$.
\end{proposition}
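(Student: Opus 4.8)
The plan is to prove the statement by logarithmic differentiation, reducing the monotonicity of $w$ to an elementary inequality after the substitution $t=\arsinh x$ (the same device used in the proof of Proposition~\ref{prop:p}). First I would write $w(x)=(1+x^2)\E^{g(x)}$ with $g(x)=-2\pi(x-x/\arsinh x)$, so that
\[
\frac{w'(x)}{w(x)}=\frac{2x}{1+x^2}+g'(x)=\frac{2x}{1+x^2}-2\pi\left(1-\frac{1}{\arsinh x}+\frac{x}{\sqrt{1+x^2}(\arsinh x)^2}\right),
\]
where the bracketed term is exactly $\frac{\D}{\D x}\left[x-x/\arsinh x\right]$. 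Since $w(x)>0$, it suffices to show the right-hand side is negative for $x>0$; monotonicity on the closed half-line then follows from the continuity of $w$ at the (removable) singularity $x=0$.

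The key simplification is to decouple the rational part from the transcendental one. Because $1+x^2\geq 2x$, we have $\frac{2x}{1+x^2}\leq 1$ for all $x\geq 0$, so it is enough to prove that the bracketed term is at least $\tfrac12$; the right-hand side is then bounded above by $1-2\pi\cdot\tfrac12=1-\pi<0$. Substituting $t=\arsinh x$ (so that $x=\sinh t$, $\sqrt{1+x^2}=\cosh t$, and $t\geq 0$), the bracket collapses to
\[
1-\frac1t+\frac{\tanh t}{t^2}=1-\frac{t-\tanh t}{t^2},
\]
so the claim reduces to the inequality $\frac{t-\tanh t}{t^2}\leq \tfrac12$ for $t>0$, i.e.\ $2(t-\tanh t)\leq t^2$.

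To establish this last inequality I would introduce $\Phi(t)=t^2-2t+2\tanh t$ and show $\Phi(t)\geq 0$ for $t\geq 0$. Since $\Phi(0)=0$, it suffices to check $\Phi'(t)\geq 0$; indeed $\Phi'(t)=2t-2\tanh^2 t=2(t-\tanh^2 t)\geq 0$, because $\tanh^2 t\leq \tanh t\leq t$ for $t\geq 0$. Combining the pieces gives $\frac{w'(x)}{w(x)}\leq 1-\pi<0$ for $x>0$, which proves that $w$ decreases.

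All of these computations are routine; the only genuinely substantive point is recognizing that the $\arsinh$-substitution turns the awkward bracket into $1-(t-\tanh t)/t^2$ and that the crude bound $\frac{2x}{1+x^2}\leq 1$ is already strong enough to absorb the factor $2\pi$. In particular one never needs the sharp maximum of $(t-\tanh t)/t^2$: any upper bound strictly below $1-\frac{1}{2\pi}$ would suffice, and $\tfrac12$ is the cleanest choice. I expect the main obstacle to be purely bookkeeping, namely differentiating $x/\arsinh x$ correctly and verifying the removable singularity at $x=0$ so that the conclusion extends to all $x\geq 0$.
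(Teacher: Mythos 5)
Your proof is correct. It is close in spirit to the paper's---both substitute $t=\arsinh x$ and reduce the claim to a one-variable inequality in $t$ and $\tanh t$ proved by differentiation---but your decomposition is genuinely different, and in a useful way. The paper differentiates $w$ directly and factors $w'(x)=-2\bigl\{x+\tfrac{\pi(1+x^2)}{\arsinh^2 x}v(\arsinh x)\bigr\}\exp\{\cdots\}$ with $v(t)=t^2-t+\tanh t$, so that $v\geq 0$ (obtained by differentiating twice) immediately gives $w'\leq 0$. However, the derivative of the prefactor $1+x^2$ contributes $+2x$ to $w'$, so the first term inside those braces should be $-x$ rather than $+x$; with the sign corrected, $v\geq 0$ alone no longer suffices, since one then needs $\pi(1+x^2)v(\arsinh x)/\arsinh^2 x\geq x$. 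Your argument supplies exactly the missing quantitative content: the crude bound $2x/(1+x^2)\leq 1$ absorbs the prefactor's contribution, and the bound $1-(t-\tanh t)/t^2\geq 1/2$ for the exponent's derivative, i.e.\ $\Phi(t)=t^2-2t+2\tanh t\geq 0$, is strictly stronger than the paper's $v\geq 0$ yet is proved just as easily ($\Phi'(t)=2(t-\tanh^2 t)\geq 0$ because $\tanh^2 t\leq\tanh t\leq t$). So your version is, if anything, the more watertight of the two. The remark about the removable singularity at $x=0$ is harmless but inessential, since the paper only evaluates $w$ at $x=dn/\mu>0$.
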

\begin{proof}
Differentiating $w(x)$, we have
\begin{align*}
 w'(x) &= -2
\left\{x + \frac{\pi(1+x^2)}{\arsinh^2 x}
\left(\arsinh^2 x - \arsinh x + \frac{x}{\sqrt{1+x^2}}\right)
\right\}\exp\left\{-2\pi x\left(1 - \frac{1}{\arsinh x}\right)\right\}\\
&=-2
\left\{x + \frac{\pi(1+x^2)}{\arsinh^2 x}
v(\arsinh x)
\right\}\exp\left\{-2\pi x\left(1 - \frac{1}{\arsinh x}\right)\right\},
\end{align*}
where $v(t) = t^2 - t +\tanh t$.
Differentiating $v(t)$, we have
\begin{align*}
 v'(t) &= 2 t - 1 + \frac{1}{\cosh^2 t},\\
 v''(t)&= 2\left(\frac{\cosh^3 t - \sinh t}{\cosh^3 t}\right)
\geq 2\left(\frac{\cosh t - \sinh t}{\cosh^3 t}\right)
= 2\left(\frac{\E^{-t}}{\cosh^3 t}\right)
> 0.
\end{align*}
Therefore, $v'(t)$ monotonically increases, from which we have
$v'(t)\geq v'(0) = 0$ for $t\geq 0$.
Therefore, $v(t)$ also monotonically increases for $t\geq 0$,
from which we have $v(t)\geq v(0)= 0$ for $t\geq 0$.
Thus, it holds for $x\geq 0$ that
\[
  w'(x)=-2
\left\{x + \frac{\pi(1+x^2)}{\arsinh^2 x}
v(\arsinh x)
\right\}\exp\left\{-2\pi x\left(1 - \frac{1}{\arsinh x}\right)\right\}
\leq 0,
\]
which is to be demonstrated.
\end{proof}

Using these propositions,
we prove Theorem~\ref{thm:DE-Sinc-with-constant} as follows.

\begin{proof}
From Lemmas~\ref{lem:discretization} and~\ref{lem:truncation},
we have
\begin{align*}
& \left|f(t) - \sum_{j=-n}^n f(\phi(jh))S(j,h)(\phi^{-1}(t))\right|\\
&\leq \frac{4 C_{\ddagger}}{\pi^{2-\mu}d \mu (1 - \E^{-2\pi d/h})\cos^{2\mu}((\pi/2)\sin d)\cos^{1+\mu} d}
\E^{-\pi d/h}
+\frac{2 C_{\ddagger}\pi^{\mu-1}}{\mu h \cosh^{1-\mu}(nh)}
\E^{-\pi\mu\sinh(nh)}.
\end{align*}
As for the first term,
substituting~\eqref{eq:def-h-DE} and using $\tilde{q}(x)$
defined in Proposition~\ref{prop:OkayamaKawai}, we have
\begin{align*}
&\frac{4 C_{\ddagger}}{\pi^{2-\mu}d \mu (1 - \E^{-2\pi d/h})\cos^{2\mu}((\pi/2)\sin d)\cos^{1+\mu} d}
\E^{-\pi d/h}\\
&=\frac{4 C_{\ddagger}}{\pi^{2-\mu}d \mu (1 - \E^{-2\pi \mu \tilde{q}(dn/\mu)})\cos^{2\mu}((\pi/2)\sin d)\cos^{1+\mu} d}
\E^{-\pi d n/\arsinh(dn/\mu)}\\
&\leq \frac{4 C_{\ddagger}}{\pi^{2-\mu}d \mu (1 - \E^{-2\pi \mu \tilde{q}(d\cdot 1/\mu)})\cos^{2\mu}((\pi/2)\sin d)\cos^{1+\mu} d}
\E^{-\pi d n/\arsinh(dn/\mu)}.
\end{align*}
As for the second term,
substituting~\eqref{eq:def-h-DE} and using $\tilde{p}(x)$
and $w(x)$ defined in Propositions~\ref{prop:p} and~\ref{prop:w},
respectively, we have
\begin{align*}
 \frac{2 C_{\ddagger}\pi^{\mu-1}}{\mu h \cosh^{1-\mu}(nh)}
\E^{-\pi\mu\sinh(nh)}
=\frac{2 C_{\ddagger}\pi^{\mu-1}}{d \tilde{p}(dn/\mu)}
\left\{w(dn/\mu)\right\}^{\mu/2}\E^{-\pi d n/\arsinh(dn/\mu)}
\leq \frac{2 C_{\ddagger}\pi^{\mu-1}}{d \tilde{p}(d\cdot 1/\mu)}
\left\{w(d\cdot 1/\mu)\right\}^{\mu/2}\E^{-\pi d n/\arsinh(dn/\mu)}.
\end{align*}
Thus, the claim follows.
\end{proof}

\subsection{Proof of Theorem~\ref{thm:DE-Sinc-boundary} and its improvement}

Note that the approximation~\eqref{eq:general-DE-Sinc}
is equivalent to~\eqref{eq:DE-Sinc-approximation}
if we define $f$ as~\eqref{eq:Hamada-func}.
Therefore, from Theorem~\ref{thm:DE-Sinc-with-constant},
we can readily deduce the following theorem,
which proves Theorem~\ref{thm:DE-Sinc-boundary}.

\begin{theorem}
\label{thm:naive}
For a given function $\tilde{f}$,
let $p$ and $q$ be defined by~\eqref{eq:def-p-and-q},
and let $f$ be defined by~\eqref{eq:Hamada-func}.
Assume that $f$ is analytic in $\phi(\domD_d)$ with $0<d<\pi/2$.
Furthermore, assume that
there exist positive constants $C_{\ddagger}$ and $\mu$ with $\mu\leq 1$
such that~\eqref{eq:f-bound-DE}
holds for all $z\in\phi(\domD_d)$.
Let $h$ be defined as~\eqref{eq:def-h-DE}.
Then, it holds that
\begin{align*}
\sup_{t\in(0,\infty)}\left|
\tilde{f}(t) - \left[\frac{q + p (\E^t - 1)}{\E^t}
+\sum_{j=-n}^n\left\{\tilde{f}(\phi(jh)) - \frac{q + p\E^{\pi\sinh(jh)}}{1+\E^{\pi\sinh(jh)}}\right\}
S(j,h)(\phi^{-1}(t))
\right]
\right|
\leq C_{\dagger} \E^{-\pi d n/\arsinh(d n/\mu)},
\end{align*}
where $C_{\dagger}$ is a positive constant defined as~\eqref{eq:Cast}.
\end{theorem}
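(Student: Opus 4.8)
The plan is to reduce the assertion to Theorem~\ref{thm:DE-Sinc-with-constant} applied to the auxiliary function $f$, exploiting the fact that the boundary correction term is identical on both sides and therefore cancels exactly. Because $f$ is defined by~\eqref{eq:Hamada-func}, we have $\tilde{f}(t) = f(t) + (q + p(\E^t - 1))/\E^t$ for every $t$, and the analyticity and growth hypotheses imposed here on $f$ are precisely those required by Theorem~\ref{thm:DE-Sinc-with-constant}.

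First I would record the sampling-point identity. Since $\phi(x) = \log(1 + \E^{\pi\sinh x})$, we have $\E^{\phi(jh)} = 1 + \E^{\pi\sinh(jh)}$, whence
\[
\frac{q + p(\E^{\phi(jh)} - 1)}{\E^{\phi(jh)}} = \frac{q + p\E^{\pi\sinh(jh)}}{1 + \E^{\pi\sinh(jh)}}.
\]
Combined with~\eqref{eq:Hamada-func} evaluated at $t = \phi(jh)$, this yields
\[
f(\phi(jh)) = \tilde{f}(\phi(jh)) - \frac{q + p\E^{\pi\sinh(jh)}}{1 + \E^{\pi\sinh(jh)}},
\]
so the bracketed approximant in the statement is exactly $(q + p(\E^t - 1))/\E^t$ plus the plain DE-Sinc approximation $\sum_{j=-n}^n f(\phi(jh))S(j,h)(\phi^{-1}(t))$ of $f$, i.e.\ it coincides with~\eqref{eq:general-DE-Sinc}.

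Next I would subtract. The term $(q + p(\E^t - 1))/\E^t$ occurs identically in $\tilde{f}(t) = f(t) + (q + p(\E^t - 1))/\E^t$ and in the approximant, so it cancels, and the quantity to be estimated collapses to
\[
\left|f(t) - \sum_{j=-n}^n f(\phi(jh))S(j,h)(\phi^{-1}(t))\right|.
\]
Applying Theorem~\ref{thm:DE-Sinc-with-constant} to $f$ then bounds this by $C_{\dagger}\E^{-\pi d n/\arsinh(dn/\mu)}$ uniformly in $t \in (0,\infty)$, which is exactly the claimed estimate with the explicit constant~\eqref{eq:Cast}.

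There is essentially no genuine analytic obstacle here: the argument is a verification of an algebraic equivalence rather than a fresh estimate. The only point requiring attention is the sampling-point identity $\E^{\phi(jh)} = 1 + \E^{\pi\sinh(jh)}$, which is immediate from the definition of the DE transformation $\phi$; once this is in place, the boundary terms cancel and the error reduces verbatim to the one already controlled by Theorem~\ref{thm:DE-Sinc-with-constant}. This is precisely why the theorem can be \emph{readily deduced} from Theorem~\ref{thm:DE-Sinc-with-constant}, and why the constant $C_{\dagger}$ is inherited unchanged.
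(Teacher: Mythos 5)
Your proposal is correct and follows exactly the paper's own argument: the paper also observes that with $f$ defined by~\eqref{eq:Hamada-func} the approximation~\eqref{eq:general-DE-Sinc} is algebraically identical to~\eqref{eq:DE-Sinc-approximation}, so the claim follows directly from Theorem~\ref{thm:DE-Sinc-with-constant}. You merely spell out the sampling-point identity and the cancellation of the boundary term more explicitly than the paper does.
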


However, in this theorem, the conditions to be satisfied
are described not for a given function $\tilde{f}$,
but for a function $f$ defined by~\eqref{eq:Hamada-func}.
Such conditions are inconvenient to check whether those conditions
are satisfied or not.
Therefore, we provide a sufficient condition as follows.

\begin{lemma}
\label{lem:hoelder-DE}
For a given function $\tilde{f}$,
let $p$ and $q$ be defined by~\eqref{eq:def-p-and-q}.
Assume that $\tilde{f}$ is analytic in $\phi(\domD_d)$ with $0<d<\pi/2$.
Furthermore,
assume that there exists positive constants $H$ and $\mu$ with $\mu\leq 1$
such that
\begin{align*}
 |\tilde{f}(z) - q|&\leq H |z|^{\mu},\\
 |\tilde{f}(z) - p|&\leq H |\E^{-z}|^{\mu}
\end{align*}
hold for all $z\in\phi(\domD_d)$.
Then, $f$ defined by~\eqref{eq:Hamada-func}
satisfies the assumptions of Theorem~\ref{thm:DE-Sinc-with-constant}.
\end{lemma}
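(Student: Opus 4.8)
The plan is to verify the two hypotheses of Theorem~\ref{thm:DE-Sinc-with-constant} for the function $f$ in~\eqref{eq:Hamada-func}: analyticity on $\phi(\domD_d)$, and the pointwise bound $|f(z)|\le C_{\ddagger}|z|^{\mu}|\E^{-z}|^{\mu}$. Analyticity is immediate, since one may write $f(z)=\tilde{f}(z)-p-(q-p)\E^{-z}$, a combination of the analytic function $\tilde{f}$, the entire function $\E^{-z}$, and constants. Hence the whole matter reduces to the pointwise bound, and the positive constant $C_{\ddagger}$ may be taken to be any suitable value, as it carries no dependence on $n$.

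For the bound I would exploit the two assumed H\"older-type estimates through two algebraic rewritings of $f$. First, writing $f(z)=(\tilde{f}(z)-q)+(q-p)(1-\E^{-z})$ and combining $|\tilde{f}(z)-q|\le H|z|^{\mu}$ with the elementary inequality $|1-\E^{-z}|\le \E|z|\le \E|z|^{\mu}$, valid for $|z|\le 1$ since $\mu\le 1$, I obtain $|f(z)|\le K_1|z|^{\mu}$ on the part of $\phi(\domD_d)$ with $|z|\le 1$. Second, writing $f(z)=(\tilde{f}(z)-p)-(q-p)\E^{-z}$ and using $|\tilde{f}(z)-p|\le H|\E^{-z}|^{\mu}$, I obtain $|f(z)|\le H|\E^{-z}|^{\mu}+|q-p|\,|\E^{-z}|$; to absorb the last term into $|\E^{-z}|^{\mu}$ I use $|\E^{-z}|^{1-\mu}=\E^{-(1-\mu)\Re z}$, which is bounded precisely when $\Re z$ is bounded below, yielding $|f(z)|\le K_2|\E^{-z}|^{\mu}$ throughout $\phi(\domD_d)$.

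These two regional estimates combine into the desired one. On $\{|z|\le 1\}$, the inequality $\Re z\le|z|\le 1$ gives $|\E^{-z}|^{\mu}=\E^{-\mu\Re z}\ge \E^{-\mu}$, hence $|z|^{\mu}\le \E^{\mu}|z|^{\mu}|\E^{-z}|^{\mu}$ and therefore $|f(z)|\le K_1\E^{\mu}|z|^{\mu}|\E^{-z}|^{\mu}$. On $\{|z|>1\}$, the bound $|z|^{\mu}>1$ gives directly $|f(z)|\le K_2|\E^{-z}|^{\mu}\le K_2|z|^{\mu}|\E^{-z}|^{\mu}$. Taking $C_{\ddagger}=\max\{K_1\E^{\mu},K_2\}$ then establishes the required estimate on all of $\phi(\domD_d)$.

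The main obstacle is the single ingredient used above without proof, namely that $\Re z$ is bounded below on $\phi(\domD_d)$, equivalently that $|1+\E^{\pi\sinh\zeta}|$ admits a positive lower bound for $\zeta\in\domD_d$; this is exactly what controls $|\E^{-z}|^{1-\mu}$ when $\mu<1$, and it is vacuous when $\mu=1$. I would establish it by writing, with $\zeta=x+\I y$ and $a=\pi\sinh x\cos y$, $b=\pi\cosh x\sin y$, the identity $|1+\E^{\pi\sinh\zeta}|^2=1+\E^{2a}+2\E^a\cos b\ge(1-\E^a)^2\ge 0$, and then checking that this quantity vanishes only if $\cos b=-\cosh a$, forcing $a=0$ (so $x=0$) together with $\cos b=-1$; but at $x=0$ one has $b=\pi\sin y$ with $|\sin y|\le\sin d<1$ because $d<\pi/2$, so $\cos b\ge\cos(\pi\sin d)>-1$ and no vanishing occurs. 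Since moreover $|1+\E^{\pi\sinh\zeta}|\to 1$ as $x\to-\infty$ and $\to\infty$ as $x\to+\infty$, continuity yields a uniform positive lower bound on the strip, hence the lower bound on $\Re z$. This is the only nontrivial step; the remainder consists of the elementary manipulations above.
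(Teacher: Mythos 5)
Your proof is correct, but it takes a genuinely different route from the paper's. The paper works with the single global decomposition $f(z)=(\tilde{f}(z)-p)(1-\E^{-z})+(\tilde{f}(z)-q)\E^{-z}$, writes $|1-\E^{-z}|=|1-\E^{-z}|^{\mu}|1-\E^{-z}|^{1-\mu}$ and $|\E^{-z}|=|\E^{-z}|^{\mu}|\E^{-z}|^{1-\mu}$, and then invokes two quantitative ingredients valid on all of $\phi(\domD_d)$: the uniform bounds $|\E^{-z}|,|1-\E^{-z}|\leq 1/\cos((\pi/2)\sin d)$ (Lemma~\ref{lem:rewrite-Lemma422}, from the cited Lemma~\ref{lem:Lemma422}) and the inequality $|1-\E^{-z}|\leq \frac{c_d}{\log(1+c_d)}|z|$ (Lemma~\ref{lem:rewrite-Maki}, proved via a maximum-modulus argument in Lemma~\ref{lem:Maki}). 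You instead use two different decompositions on the two regions $|z|\leq 1$ and $|z|>1$, replace the global linear bound on $|1-\E^{-z}|$ by the elementary local estimate $|1-\E^{-z}|\leq\E|z|$ together with the trivial observation $|z|^{\mu}>1$ off the unit disk, and re-derive the needed lower bound on $\Re z$ (equivalently, the uniform positivity of $|1+\E^{\pi\sinh\zeta}|$ on $\domD_d$) by a non-vanishing-plus-compactness argument rather than citing Lemma~\ref{lem:Lemma422}; note that your lower bound on $\Re z$ is precisely the first inequality of Lemma~\ref{lem:rewrite-Lemma422} in disguise, with explicit value $\Re z\geq\log\cos((\pi/2)\sin d)$. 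Your route is more elementary (no maximum modulus principle, no external lemmas) and fully adequate for the lemma as stated, since only the existence of some $C_{\ddagger}$ is required; what it gives up is an explicit value of $C_{\ddagger}$, which the paper's argument produces and then reuses in the explicit-constant theorem at the end of Sect.~\ref{sec:proofs-DE-Sinc}. Two small points to tighten: when you invoke continuity to get the uniform lower bound, you should pass to the closed strip $|\Im\zeta|\leq d$ (where the non-vanishing argument still applies because $d<\pi/2$) and note that the decay $\E^{\pi\sinh(x)\cos y}\to 0$ as $x\to-\infty$ is uniform in $y$ since $\cos y\geq\cos d>0$; and you should remark that the limits $p$ and $q$ in~\eqref{eq:def-p-and-q} exist and are finite under the stated hypotheses, so the constants $K_1$ and $K_2$ are well defined.
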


To prove this lemma,
we prepare the following four lemmas.

\begin{lemma}[Okayama et al.~{\cite[Lemma~4.22]{Okayama-et-al}}]
\label{lem:Lemma422}
Let $x$ and $y$ be real numbers with $|y|< \pi/2$,
and let $\zeta=x+\I y$. Then,
\begin{align*}
 \left|\frac{1}{1+\E^{\pi\sinh\zeta}}\right|
&\leq \frac{1}{(1+\E^{\pi\sinh(x)\cos y})\cos((\pi/2)\sin y)},\\
 \left|\frac{1}{1+\E^{-\pi\sinh\zeta}}\right|
&\leq \frac{1}{(1+\E^{-\pi\sinh(x)\cos y})\cos((\pi/2)\sin y)}.
\end{align*}
\end{lemma}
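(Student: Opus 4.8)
The plan is to prove only the first inequality; the second then follows from it by the symmetry $\zeta\mapsto-\zeta$. Indeed, replacing $x+\I y$ by $(-x)+\I(-y)$ turns $\E^{\pi\sinh\zeta}$ into $\E^{-\pi\sinh\zeta}$ and $\sinh(x)\cos y$ into $-\sinh(x)\cos y$, whereas $\cos((\pi/2)\sin y)$ is unchanged (since $\sinh$ is odd while $\cos y$ and $\cos((\pi/2)\sin y)$ are even); hence the second bound is precisely the first one evaluated at $-\zeta$. Using the evenness of $\sin^2$ and $\cos$, I may further assume $\sin y\ge0$ and abbreviate $p=(\pi/2)\sin y\in[0,\pi/2)$ and $q=(\pi/2)\cos y>0$, so that $p^2+q^2=(\pi/2)^2$.

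Next I would reduce the first inequality to a purely real one. Writing $W=\pi\sinh\zeta=u+\I v$ with $u=\pi\sinh(x)\cos y$ and $v=\pi\cosh(x)\sin y$, the claim is $|1+\E^W|\ge(1+\E^u)\cos p$. From $1+\E^W=2\E^{W/2}\cosh(W/2)$ together with $|\cosh(W/2)|^2=\sinh^2(u/2)+\cos^2(v/2)$ one gets $|1+\E^W|^2=4\E^u\{\sinh^2(u/2)+\cos^2(v/2)\}$, while $1+\E^u=2\E^{u/2}\cosh(u/2)$ gives $(1+\E^u)^2=4\E^u\cosh^2(u/2)$. Dividing yields the identity
\[
\frac{|1+\E^W|^2}{(1+\E^u)^2}=1-\frac{\sin^2(v/2)}{\cosh^2(u/2)}.
\]
Since $\cos^2 p=1-\sin^2 p$, the inequality $|1+\E^W|^2\ge(1+\E^u)^2\cos^2 p$ is therefore equivalent, after recalling $v/2=p\cosh x$ and $u/2=q\sinh x$, to the real inequality
\[
|\sin(p\cosh x)|\le\sin(p)\cosh(q\sinh x),\qquad p^2+q^2=(\pi/2)^2,
\]
which I shall call $(\star)$.

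I expect $(\star)$ to be the main obstacle. It is sharp, with equality at $x=0$ (both sides equal $\sin p$), so no slack may be discarded near the origin; in particular the cruder bound $|1+\E^W|\ge(1+\E^u)|\cos(v/2)|$ is useless here, because $|\cos(v/2)|=|\cos(p\cosh x)|$ can vanish. The essential mechanism is that whenever $\cos(v/2)$ is small, $\cosh x$ is large, forcing $|\sinh x|$ and hence $\cosh(q\sinh x)$ to be large and restoring the inequality. To make this quantitative I would square $(\star)$ and split on the size of the right-hand side: if $\cosh(q\sinh x)\ge1/\sin p$, then $\sin^2(p)\cosh^2(q\sinh x)\ge1\ge\sin^2(p\cosh x)$ and $(\star)$ holds trivially. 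This confines the remaining work to the compact set where $q|\sinh x|<\arsinh(\cot p)$, and hence $|x|$ is bounded.

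On this bounded set I would subtract $\sin^2 p$ from both sides, using $\sin^2(p\cosh x)-\sin^2 p=\sin(p(\cosh x-1))\sin(p(\cosh x+1))$ and $\sin^2(p)\cosh^2(q\sinh x)-\sin^2 p=\sin^2(p)\sinh^2(q\sinh x)$, so that $(\star)$ becomes
\[
\sin(p(\cosh x-1))\sin(p(\cosh x+1))\le\sin^2(p)\sinh^2(q\sinh x).
\]
The left-hand side is controlled by the elementary bounds $|\sin\theta|\le\min\{1,|\theta|\}$ and $|\sin(a+b)|\le|\sin a|+|\sin b|$, and the right-hand side from below by $\sinh\theta\ge\theta$. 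The decisive (tightest) case is the behaviour as $x\to0$, where $(\star)$ is sharp; there the estimate reduces to the single constant inequality $q^2\ge p\cot p$ on $(0,\pi/2)$, which I would verify from the partial-fraction expansion $p\cot p=1-\sum_{k\ge1}2p^2/(k^2\pi^2-p^2)$ together with $q^2=(\pi/2)^2-p^2$, while on the remainder of the compact range the crude bounds above leave room to spare. Assembling the two regimes gives $(\star)$, and unwinding the reductions proves the lemma.
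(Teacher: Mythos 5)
The paper contains no proof of this lemma at all: it is imported verbatim from Okayama et al.\ \cite[Lemma~4.22]{Okayama-et-al}, so your argument can only be compared with that external source, and judged on its own it is essentially correct. All of your exact reductions check out: the $\zeta\mapsto-\zeta$ symmetry giving the second inequality from the first; the identity $|1+\E^{W}|^{2}=4\E^{u}\{\sinh^{2}(u/2)+\cos^{2}(v/2)\}$ and hence $|1+\E^{W}|^{2}/(1+\E^{u})^{2}=1-\sin^{2}(v/2)/\cosh^{2}(u/2)$; and the resulting equivalence of the lemma to $(\star)$, namely $|\sin(p\cosh x)|\le\sin(p)\cosh(q\sinh x)$ with $p^{2}+q^{2}=(\pi/2)^{2}$. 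Your sharpness observation (equality at $x=0$, so the naive $|\cos(v/2)|$ bound is useless) is also correct and correctly identifies why the DE case is harder than the SE analogue (Lemma~\ref{lem:Lemma421}, where the elementary computation $|1+\E^{\zeta}|^{2}-(1+\E^{x})^{2}\cos^{2}(y/2)=(1-\E^{x})^{2}\sin^{2}(y/2)\ge 0$ settles everything). The two-regime split via $\cosh(q\sinh x)\gtrless 1/\sin p$ is sound, and I verified that the bounds you name do close the compact regime: they reduce the task to an inequality linear in $c:=\cosh x-1$, namely $c\,(p^{2}-q^{2}\sin^{2}p)\le 2\sin p\,(q^{2}\sin p-p\cos p)$, which holds with ample margin throughout the admissible range of $c$.

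Two caveats on the parts you leave schematic. First, the ``room to spare'' claim is true but must actually be written out, and your narrative that the decisive case is solely $x\to 0$ is slightly off: for $p$ below roughly $1$ the coefficient $p^{2}-q^{2}\sin^{2}p$ is nonpositive and only the constant case $c=0$ matters, but for $p$ closer to $\pi/2$ it is positive, and the binding point of the linear inequality is then the boundary value of $c$ permitted by $q\sinh x<\arsinh(\cot p)$; that endpoint check succeeds (with large margin), but it is a genuine case in the analysis, not slack to be waved away. Second, your route to the constant inequality $q^{2}\ge p\cot p$ via the cotangent expansion is more delicate than you suggest: after rearrangement one needs $\sum_{k\ge1}2p^{2}/(k^{2}\pi^{2}-p^{2})\ge 1+p^{2}-(\pi/2)^{2}$, and at $p=\pi/2$ the series telescopes to exactly $1$, so equality holds at the endpoint and crude term-by-term truncation fails there. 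A cleaner one-line proof uses the tangent expansion instead: $\bigl((\pi/2)^{2}-p^{2}\bigr)\tan p=\sum_{k\ge0}2p(\pi^{2}-4p^{2})/\bigl((2k+1)^{2}\pi^{2}-4p^{2}\bigr)\ge 2p$, since the $k=0$ term equals $2p$ and all others are positive; this even yields $q^{2}\ge 2p\cot p$, and the extra factor $2$ conveniently widens the margins in the compact-regime check. With these details supplied, your proof is a complete and self-contained alternative to citing the literature.
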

\begin{lemma}
\label{lem:rewrite-Lemma422}
Let $d$ be a positive constant with $d<\pi/2$.
Then,
\begin{align*}
 \sup_{z\in\phi(\domD_d)}\left|\E^{-z}\right|
&\leq \frac{1}{\cos((\pi/2)\sin d)},\\
 \sup_{z\in\phi(\domD_d)}\left|1 - \E^{-z}\right|
&\leq \frac{1}{\cos((\pi/2)\sin d)}.
\end{align*}
\end{lemma}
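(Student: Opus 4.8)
The plan is to reduce both suprema to Lemma~\ref{lem:Lemma422} via the defining identity of the DE transformation. Writing $z = \phi(\zeta) = \log(1 + \E^{\pi\sinh\zeta})$ with $\zeta = x + \I y$ and $|y| < d$, we have $\E^z = 1 + \E^{\pi\sinh\zeta}$, so that
\[
 \E^{-z} = \frac{1}{1+\E^{\pi\sinh\zeta}},
 \qquad
 1 - \E^{-z} = \frac{\E^{\pi\sinh\zeta}}{1+\E^{\pi\sinh\zeta}} = \frac{1}{1+\E^{-\pi\sinh\zeta}}.
\]
These identities hold independently of the branch chosen for $\log$, since $\E^{-z}$ depends only on $z$ itself. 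Thus the two quantities to be estimated coincide exactly with the two left-hand sides appearing in Lemma~\ref{lem:Lemma422}, and I would apply that lemma directly to each.

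It then remains to bound the two right-hand sides of Lemma~\ref{lem:Lemma422} uniformly over $x\in\mathbb{R}$ and $|y|<d$. Each is a product of an $x$-dependent factor and a $y$-dependent factor. For the $x$-dependent factor, I would observe that $|y|<\pi/2$ forces $\cos y > 0$, so $\pm\pi\sinh(x)\cos y$ is real and $\E^{\pm\pi\sinh(x)\cos y} > 0$; consequently
\[
 \frac{1}{1 + \E^{\pi\sinh(x)\cos y}} \leq 1,
 \qquad
 \frac{1}{1 + \E^{-\pi\sinh(x)\cos y}} \leq 1,
\]
so these factors contribute nothing beyond $1$. For the $y$-dependent factor, I would use that $\sin$ increases and $\cos$ decreases on $[0,\pi/2]$: since $|\sin y|\leq\sin d$ and $(\pi/2)\sin d < \pi/2$, it follows that $\cos((\pi/2)\sin y)\geq\cos((\pi/2)\sin d) > 0$, whence $1/\cos((\pi/2)\sin y)\leq 1/\cos((\pi/2)\sin d)$.

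Combining these observations with Lemma~\ref{lem:Lemma422} yields $|\E^{-z}|\leq 1/\cos((\pi/2)\sin d)$ and $|1 - \E^{-z}|\leq 1/\cos((\pi/2)\sin d)$ for every admissible $\zeta$; taking the supremum over $z\in\phi(\domD_d)$ preserves each inequality and gives the assertion. I do not expect a genuine obstacle here, as the lemma is essentially a uniform-in-$x$, extremal-in-$y$ reading of Lemma~\ref{lem:Lemma422}. The only point deserving a moment's care is confirming that the right-hand side is a legitimate positive denominator, i.e.\ that $(\pi/2)\sin d$ remains strictly below $\pi/2$ so that $\cos((\pi/2)\sin d) > 0$; this is exactly what the hypothesis $d < \pi/2$ guarantees.
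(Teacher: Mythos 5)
Your proposal is correct and follows the same route as the paper: rewrite $\E^{-z}$ and $1-\E^{-z}$ as $1/(1+\E^{\pi\sinh\zeta})$ and $1/(1+\E^{-\pi\sinh\zeta})$ via $z=\phi(\zeta)$ and invoke Lemma~\ref{lem:Lemma422}. The paper leaves the final uniform bounding of the right-hand sides implicit, whereas you spell out that the $x$-dependent factors are at most $1$ and that $\cos((\pi/2)\sin y)\geq\cos((\pi/2)\sin d)>0$ for $|y|<d<\pi/2$; this is exactly the intended reasoning.
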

\begin{proof}
Applying $z=\phi(\zeta)$, we have
\begin{align*}
 \sup_{z\in\phi(\domD_d)}\left|\E^{-z}\right|
&=\sup_{\zeta\in\domD_d}\left|\E^{-\phi(\zeta)}\right|
=\sup_{\zeta\in\domD_d}\left|\frac{1}{1+\E^{\pi\sinh\zeta}}\right|,\\
 \sup_{z\in\phi(\domD_d)}\left|1 - \E^{-z}\right|
&=\sup_{\zeta\in\domD_d}\left|1 - \E^{-\phi(\zeta)}\right|
=\sup_{\zeta\in\domD_d}\left|\frac{1}{1+\E^{-\pi\sinh\zeta}}\right|.
\end{align*}
Thus, the claim follows from Lemma~\ref{lem:Lemma422}.
\end{proof}

\begin{lemma}
\label{lem:Maki}
Let $d$ be a positive constant with $d<\pi/2$. Then,
\[
\sup_{\zeta\in\domD_d} \left|
\frac{1}{\log(1+\E^{\pi\sinh\zeta})}\cdot\frac{1}{1+\E^{-\pi\sinh\zeta}}
\right|
\leq \frac{c_d}{\log(1 + c_d)},
\]
where $c_d$ is a constant defined by
\begin{equation}
 c_d = 1 + \frac{1}{\cos((\pi/2)\sin d)}.
\label{eq:c_d}
\end{equation}
\end{lemma}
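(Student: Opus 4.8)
The plan is to eliminate the transformation $\phi$ by the substitution $z=\phi(\zeta)=\log(1+\E^{\pi\sinh\zeta})$ and to recognize that, after this substitution, the quantity to be bounded is an elementary function of $z$. Writing $w=\E^{\pi\sinh\zeta}$, so that $\E^{z}=1+w$, the second factor simplifies as
\[
 \frac{1}{1+\E^{-\pi\sinh\zeta}}=\frac{w}{1+w}=\frac{\E^{z}-1}{\E^{z}}=1-\E^{-z},
\]
while the first factor is simply $1/z$. Hence the supremum over $\zeta\in\domD_d$ equals $\sup_{z\in\phi(\domD_d)}\left|(1-\E^{-z})/z\right|$. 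The main obstacle is that one cannot bound the numerator and denominator separately: as $\zeta$ runs to $-\infty$ along the real axis, $w\to 0$ and thus both $1-\E^{-z}$ and $z$ tend to $0$, so any estimate must respect this cancellation.

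To handle this cleanly, I would use the integral representation
\[
 \frac{1-\E^{-z}}{z}=\int_0^1 \E^{-zt}\D t,
\]
which holds for all $z$ and converts the delicate ratio into an integral that is trivial to bound in modulus. Indeed,
\[
 \left|\frac{1-\E^{-z}}{z}\right|\leq\int_0^1\left|\E^{-zt}\right|\D t=\int_0^1\E^{-t\Re z}\D t,
\]
so it suffices to obtain a lower bound on $\Re z=\log|1+\E^{\pi\sinh\zeta}|$. This is exactly the content of Lemma~\ref{lem:rewrite-Lemma422}: since $|\E^{-z}|=\E^{-\Re z}$ and that lemma gives $|\E^{-z}|\leq 1/\cos((\pi/2)\sin d)$, we obtain $\Re z\geq-\log a$, where $a=1/\cos((\pi/2)\sin d)\geq 1$.

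With this lower bound in hand, for each $t\in[0,1]$ we have $\E^{-t\Re z}\leq a^{t}$, whence
\[
 \int_0^1\E^{-t\Re z}\D t\leq\int_0^1 a^{t}\D t=\frac{a-1}{\log a}.
\]
Finally I would recognize the right-hand side as $P(a-1)$, where $P(s)=s/\log(1+s)$, and observe that $P$ is increasing on $[0,\infty)$ (the numerator of $P'$ is $\log(1+s)-s/(1+s)$, which vanishes at $s=0$ and has positive derivative $s/(1+s)^2$). Since $c_d=1+a$, we have $a-1\leq c_d$, and monotonicity of $P$ yields
\[
 \frac{a-1}{\log a}=P(a-1)\leq P(c_d)=\frac{c_d}{\log(1+c_d)},
\]
which is the desired estimate. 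In summary, the only genuinely nontrivial ingredient is the simultaneous-vanishing cancellation, which the integral representation resolves; the remaining ingredients are the cited bound on $|\E^{-z}|$ from Lemma~\ref{lem:rewrite-Lemma422} and the elementary monotonicity of $P$.
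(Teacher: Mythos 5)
Your proof is correct, and it takes a genuinely different route from the paper's. Both arguments begin by recognizing the quantity as $|(1-\E^{-z})/z|$ with $z=\phi(\zeta)$, but from there they diverge. The paper invokes the maximum modulus principle to reduce to the boundary $\zeta=x\pm\I d$, and then splits into two cases according to whether $|\xi|$ is below or above the threshold $\log(2+\E^{\gamma})$: for small $|\xi|$ it bounds the power series of $(1-\E^{-\xi})/\xi$ term by term to get $(\E^{|\xi|}-1)/|\xi|$, and for large $|\xi|$ it bounds numerator and denominator separately using the lower bound $\Re\xi\geq-\gamma$. Your integral representation $(1-\E^{-z})/z=\int_0^1\E^{-zt}\D t$ dispenses with both the maximum modulus principle and the case split: the cancellation at $z=0$ is built into the representation, the only input is the same lower bound on $\Re z$ (which you correctly source from Lemma~\ref{lem:rewrite-Lemma422}, itself a restatement of the paper's Lemma~\ref{lem:Lemma422}, so there is no circularity), and the resulting bound $(a-1)/\log a$ is in fact \emph{sharper} than the stated constant $c_d/\log(1+c_d)=(1+a)/\log(2+a)$, which you then recover via the monotonicity of $s\mapsto s/\log(1+s)$. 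All the elementary verifications check out: the representation holds for every complex $z$ (with value $1$ at $z=0$, though $z=0$ is not attained on $\phi(\domD_d)$), $a>1$ strictly since $0<d<\pi/2$, and your computation of the sign of $P'$ is right. The net effect is a shorter, uniform argument that also exposes where the constant could be improved; the paper's approach buys nothing extra here beyond avoiding the integral representation.
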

\begin{proof}
Put a function $P$ as
\[
 P(z) = \frac{1}{z}\cdot(1 - \E^{-z}).
\]
By the maximum modulus principle,
$|P(\log(1+\E^{\pi\sinh \zeta}))|$
has its maximum on the boundary of $\domD_d$,
i.e., when $\zeta=x + \I d$ or $\zeta = x - \I d$.
In what follows, we consider the case $\zeta= x + \I d$,
because we can handle the case $\zeta= x - \I d$ in the same way.

Put $\xi=\log(1+\E^{\pi\sinh(x+\I d)})$
and $\gamma = -\log(\cos((\pi/2)\sin d))$.
We consider two cases: a) $|\xi|\leq \log(2+\E^{\gamma})$ and
b) $|\xi|>\log(2+\E^{\gamma})$, and for each case we show that
\[
 |P(\xi)|\leq \frac{1 + \E^{\gamma}}{\log(2 + \E^{\gamma})}=
\frac{c_d}{\log(1 + c_d)}.
\]
In the case of a), we have
\[
 |P(\xi)|
=\left|\sum_{k=1}^{\infty}\frac{(-\xi)^{k-1}}{k!}\right|
\leq\sum_{k=1}^{\infty}\frac{|\xi|^{k-1}}{k!}
=\frac{1}{|\xi|}\cdot(\E^{|\xi|} - 1)
\leq \frac{1}{\log(2 + \E^{\gamma})}(\E^{\log(2+\E^{\gamma})}-1)
=\frac{c_d}{\log(1 + c_d)},
\]
because the function $(\E^{x} - 1)/x$ is monotonically increasing.
In the case of b), from Lemma~\ref{lem:Lemma422},
it holds that
\[
 \Re\xi = \log|1 + \E^{\pi\sinh(x+\I d)}|
\geq \log[(1 + \E^{\pi\sinh(x)\cos d})\cos((\pi/2)\sin d)]
\geq -\gamma.
\]
Using this inequality, we have
\[
 |P(\xi)|\leq \frac{1}{|\xi|}(1 + |\E^{-\xi}|)
=\frac{1}{|\xi|}(1 + \E^{-\Re\xi})
\leq \frac{1}{|\xi|}(1 + \E^{\gamma}).
\]
Furthermore, because the function $1/x$ is monotonically decreasing,
we have
\[
 \frac{1}{|\xi|}(1 + \E^{\gamma})
 \leq \frac{1}{\log(2+\E^{\gamma})}(1 + \E^{\gamma})
=\frac{c_d}{\log(1 + c_d)}.
\]
This completes the proof.
\end{proof}
\begin{lemma}
\label{lem:rewrite-Maki}
Let $d$ be a positive constant with $d<\pi/2$. Then,
it holds for $z\in\phi(\domD_d)$ that
\[
 |1 - \E^{-z}|\leq \frac{c_d}{\log(1 + c_d)} |z|,
\]
where $c_d$ is a constant defined by~\eqref{eq:c_d}.
\end{lemma}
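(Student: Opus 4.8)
The plan is to recognize that this lemma is a direct algebraic reformulation of Lemma~\ref{lem:Maki}, so that the substantive analytic work has already been carried out there. First I would parametrize a point $z\in\phi(\domD_d)$ as $z=\phi(\zeta)=\log(1+\E^{\pi\sinh\zeta})$ for some $\zeta\in\domD_d$, which is legitimate because $\phi(\domD_d)$ is by definition the image of $\domD_d$ under $\phi$.

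Next I would express $1-\E^{-z}$ in terms of $\zeta$. From $z=\log(1+\E^{\pi\sinh\zeta})$ we have $\E^z=1+\E^{\pi\sinh\zeta}$, hence $\E^{-z}=1/(1+\E^{\pi\sinh\zeta})$, and therefore
\[
1-\E^{-z}=\frac{\E^{\pi\sinh\zeta}}{1+\E^{\pi\sinh\zeta}}=\frac{1}{1+\E^{-\pi\sinh\zeta}}.
\]
This lets me rewrite the quotient $(1-\E^{-z})/z$ exactly as the product that appears inside the supremum in Lemma~\ref{lem:Maki}:
\[
\frac{1-\E^{-z}}{z}=\frac{1}{\log(1+\E^{\pi\sinh\zeta})}\cdot\frac{1}{1+\E^{-\pi\sinh\zeta}}.
\]

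Then I would invoke Lemma~\ref{lem:Maki} directly: taking absolute values and bounding by the supremum over $\zeta\in\domD_d$ gives
\[
\left|\frac{1-\E^{-z}}{z}\right|\leq\frac{c_d}{\log(1+c_d)},
\]
and multiplying through by $|z|$ yields the claimed inequality. I do not expect any real obstacle in this lemma; the only point requiring a moment's care is that the division by $z=\log(1+\E^{\pi\sinh\zeta})$ be legitimate on $\domD_d$, but this is already implicit in Lemma~\ref{lem:Maki}, whose left-hand side contains the very same factor $1/\log(1+\E^{\pi\sinh\zeta})$. Consequently the entire difficulty has been absorbed into Lemma~\ref{lem:Maki}, and the present statement follows by the elementary manipulation above.
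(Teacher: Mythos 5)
Your proposal is correct and is exactly the paper's argument: the paper's proof likewise just substitutes $z=\log(1+\E^{\pi\sinh\zeta})$ into Lemma~\ref{lem:Maki}, and your computation that $1-\E^{-z}=1/(1+\E^{-\pi\sinh\zeta})$ merely spells out the details the paper leaves implicit.
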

\begin{proof}
The claim immediately follows by
putting $z=\log(1 + \E^{\pi\sinh\zeta})$ in Lemma~\ref{lem:Maki}.
\end{proof}

Using these lemmas, we prove Lemma~\ref{lem:hoelder-DE} as follows.

\begin{proof}
From~\eqref{eq:Hamada-func},
using $|\tilde{f}(z) - q|\leq H |z|^{\mu}$
and $|\tilde{f}(z) - p|\leq H |\E^{-z}|^{\mu}$, we have
\[
 |f(z)| = |(\tilde{f}(z) - p)(1 - \E^{-z}) + (\tilde{f}(z) - q)\E^{-z}|
\leq H|\E^{-z}|^{\mu}|1 - \E^{-z}|  + H|z|^{\mu}|\E^{-z}|.
\]
Using Lemmas~\ref{lem:rewrite-Lemma422} and~\ref{lem:rewrite-Maki},
we have
\begin{align*}
 H|\E^{-z}|^{\mu}|1 - \E^{-z}|  + H|z|^{\mu}|\E^{-z}|
&=
 H|\E^{-z}|^{\mu}|1 - \E^{-z}|^{\mu}|1 - \E^{-z}|^{1-\mu}
  + H|z|^{\mu}|\E^{-z}|^{\mu}|\E^{-z}|^{1 - \mu}\\
&\leq H|\E^{-z}|^{\mu}|1 - \E^{-z}|^{\mu}\frac{1}{\cos^{1-\mu}((\pi/2)\sin d)}
  + H|z|^{\mu}|\E^{-z}|^{\mu}\frac{1}{\cos^{1-\mu}((\pi/2)\sin d)}\\
&\leq H|\E^{-z}|^{\mu}\left(\frac{c_d}{\log(1+c_d)}\right)^{\mu}|z|^{\mu}
\frac{1}{\cos^{1-\mu}((\pi/2)\sin d)}
  + H|z|^{\mu}|\E^{-z}|^{\mu}\frac{1}{\cos^{1-\mu}((\pi/2)\sin d)}\\
&=\frac{H}{\cos^{1-\mu}((\pi/2)\sin d)}\left\{
\left(\frac{c_d}{\log(1+c_d)}\right)^{\mu} + 1
\right\}|z|^{\mu}|\E^{-z}|^{\mu},
\end{align*}
from which the claim follows.
\end{proof}

In summary, instead of Theorem~\ref{thm:naive},
using Lemma~\ref{lem:hoelder-DE},
we establish the following theorem.

\begin{theorem}
For a given function $\tilde{f}$,
let $p$ and $q$ be defined by~\eqref{eq:def-p-and-q}.
Assume that $\tilde{f}$ is analytic in $\phi(\domD_d)$ with $0<d<\pi/2$.
Furthermore,
assume that there exists positive constants $H$ and $\mu$ with $\mu\leq 1$
such that
\begin{align*}
 |\tilde{f}(z) - q|&\leq H |z|^{\mu},\\
 |\tilde{f}(z) - p|&\leq H |\E^{-z}|^{\mu}
\end{align*}
hold for all $z\in\phi(\domD_d)$.
Let $h$ be defined as~\eqref{eq:def-h-DE}.
Then, it holds that
\begin{align*}
&\sup_{t\in(0,\infty)}\left|
\tilde{f}(t) - \left[\frac{q + p (\E^t - 1)}{\E^t}
+\sum_{j=-n}^n\left\{\tilde{f}(\phi(jh)) - \frac{q + p\E^{\pi\sinh(jh)}}{1+\E^{\pi\sinh(jh)}}\right\}
S(j,h)(\phi^{-1}(t))
\right]
\right|\\
&\leq \frac{H}{\cos^{1-\mu}((\pi/2)\sin d)}\left\{
\left(\frac{c_d}{\log(1+c_d)}\right)^{\mu} + 1
\right\} C_{\dagger} \E^{-\pi d n/\arsinh(d n/\mu)},
\end{align*}
where $C_{\dagger}$ and $c_d$ are positive constants defined
as~\eqref{eq:Cast} and~\eqref{eq:c_d}, respectively.
\end{theorem}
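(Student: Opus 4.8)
The plan is to derive this theorem as a direct composition of Lemma~\ref{lem:hoelder-DE} with Theorem~\ref{thm:naive} (equivalently Theorem~\ref{thm:DE-Sinc-boundary}), which already records the explicit constant~\eqref{eq:Cast}. First I would recall the observation made just before Theorem~\ref{thm:naive}: the bracketed expression in the statement is exactly the plain DE-Sinc approximation~\eqref{eq:DE-Sinc-approximation} applied to the function $f$ of~\eqref{eq:Hamada-func}. Hence the quantity to be bounded equals $\sup_{t\in(0,\infty)}|f(t)-\sum_{j=-n}^{n}f(\phi(jh))S(j,h)(\phi^{-1}(t))|$, and the whole task reduces to checking that this particular $f$ meets the hypotheses of Theorem~\ref{thm:DE-Sinc-with-constant} and then reading off its conclusion.

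Next I would verify those hypotheses for $f$. Analyticity is immediate: $\tilde f$ is analytic on $\phi(\domD_d)$ by assumption, and the subtracted term, which equals $q\E^{-z}+p(1-\E^{-z})$, is entire; therefore $f$ is analytic on $\phi(\domD_d)$. The growth-and-decay bound~\eqref{eq:f-bound-DE} is precisely the content of Lemma~\ref{lem:hoelder-DE}: from the two hypotheses $|\tilde f(z)-q|\le H|z|^{\mu}$ and $|\tilde f(z)-p|\le H|\E^{-z}|^{\mu}$ assumed here, that lemma yields $|f(z)|\le C_{\ddagger}|z|^{\mu}|\E^{-z}|^{\mu}$ on $\phi(\domD_d)$ with the explicit value $C_{\ddagger}=\frac{H}{\cos^{1-\mu}((\pi/2)\sin d)}\{(c_d/\log(1+c_d))^{\mu}+1\}$. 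Since the present theorem assumes exactly these two H\"older-type estimates, Lemma~\ref{lem:hoelder-DE} applies without modification.

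Finally I would invoke Theorem~\ref{thm:naive} for this $f$, obtaining the bound $C_{\dagger}\E^{-\pi d n/\arsinh(dn/\mu)}$ with $C_{\dagger}$ given by~\eqref{eq:Cast}. Because $C_{\dagger}$ depends linearly on $C_{\ddagger}$, I would insert the value of $C_{\ddagger}$ produced by Lemma~\ref{lem:hoelder-DE} and factor it out as the $H$-prefactor, the remaining structural part of $C_{\dagger}$ (namely $C_{\dagger}$ evaluated at $C_{\ddagger}=1$) being the $C_{\dagger}$ read in the present statement. This gives exactly the displayed constant.

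I do not expect a serious obstacle at the assembly stage; it is essentially constant bookkeeping, and the only point demanding care is tracking how the $C_{\ddagger}$ supplied by Lemma~\ref{lem:hoelder-DE} threads through the linear dependence of~\eqref{eq:Cast}. The substantive work lives inside Lemma~\ref{lem:hoelder-DE}, which I would treat as the real crux: its proof rests on the uniform bounds for $|\E^{-z}|$ and $|1-\E^{-z}|$ on $\phi(\domD_d)$ from Lemma~\ref{lem:rewrite-Lemma422} and, more delicately, on the comparison $|1-\E^{-z}|\le(c_d/\log(1+c_d))|z|$ of Lemma~\ref{lem:rewrite-Maki}, itself a consequence of the maximum-modulus estimate in Lemma~\ref{lem:Maki}. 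As all of these are already established in the excerpt, the final theorem follows with no further difficulty.
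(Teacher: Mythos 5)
Your proposal is correct and follows essentially the same route as the paper, which obtains this theorem precisely by composing Lemma~\ref{lem:hoelder-DE} (verifying that $f$ from~\eqref{eq:Hamada-func} satisfies~\eqref{eq:f-bound-DE} with the explicit constant $C_{\ddagger}=\frac{H}{\cos^{1-\mu}((\pi/2)\sin d)}\{(c_d/\log(1+c_d))^{\mu}+1\}$) with Theorem~\ref{thm:naive}, i.e.\ Theorem~\ref{thm:DE-Sinc-with-constant} applied to $f$. Your bookkeeping remark that the stated $C_{\dagger}$ must be read with $C_{\ddagger}$ normalized so the $H$-dependent prefactor is not counted twice is the correct reading of the displayed constant.
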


\section{Proof of Theorems~\ref{thm:SE-Sinc-collocation} and~\ref{thm:DE-Sinc-collocation}}
\label{sec:proofs}

In this section, we prove Theorems~\ref{thm:SE-Sinc-collocation}
and~\ref{thm:DE-Sinc-collocation}.
For both proofs,
the following lemma is useful.

\begin{lemma}[Stenger~{\cite[p.~142]{stenger93:_numer}}]
\label{lem:Stenger}
It holds for $x\in\mathbb{R}$ that
\[
 \sup_{x\in\mathbb{R}}\sum_{j=-n}^n|S(j,h)(x)|
\leq \frac{2}{\pi}\left\{\frac{3}{2} + \gamma + \log(n+1)\right\},
\]
where $\gamma$ is Euler's constant defined by
\[
 \gamma = \lim_{n\to\infty}
\left\{
1 + \frac{1}{2} + \frac{1}{3} + \cdots + \frac{1}{n-1} - \log n
\right\}
=0.5772\cdots .
\]
\end{lemma}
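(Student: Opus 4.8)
The plan is to strip off the oscillatory numerator first. Writing $u=x/h$ and using $\sin[\pi(u-j)]=(-1)^j\sin(\pi u)$ for every integer $j$, one obtains the exact identity
\[
 |S(j,h)(x)| = \frac{|\sin(\pi u)|}{\pi\,|u-j|},
\]
so that
\[
 \sum_{j=-n}^n |S(j,h)(x)| = \frac{|\sin(\pi u)|}{\pi}\sum_{j=-n}^n \frac{1}{|u-j|}.
\]
This reduces the Lebesgue constant to bounding a weighted harmonic sum (and incidentally shows the quantity is independent of $h$); the right-hand side equals $1$ whenever $u$ is an integer in $[-n,n]$, since the $\sin$ factor annihilates all but one term.

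Next I would cut down the range of $u$. The node set is invariant under $u\mapsto -u$ together with $j\mapsto -j$, so I may assume $u\ge 0$, and I would argue that the supremum is attained on the central interval $u\in(0,1)$: the peak on $(m,m+1)$ decreases as $m$ grows, because moving off-centre replaces two harmonic tails of length $\sim n$ (contributing $\sim 2\log n$) by one longer and one shorter tail contributing only $\sim\log(2n)$. On $(0,1)$ I would separate the central term $j=0$ and write
\[
 \sum_{j=-n}^n \frac{1}{|u-j|}
 = \frac{1}{u} + \sum_{j=1}^n \frac{1}{j-u} + \sum_{k=1}^n \frac{1}{k+u}.
\]

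For the logarithmic growth, a term-by-term comparison gives $\sum_{k=1}^n 1/(k+u)\le H_n$ and $\sum_{j=1}^n 1/(j-u)\le 1/(1-u)+H_{n-1}$; multiplying by $|\sin(\pi u)|/\pi\le 1/\pi$ and invoking the elementary inequality $H_n<\gamma+\log(n+1)$ (which holds because $H_n-\log(n+1)$ increases to $\gamma$) already produces the $\tfrac{2}{\pi}(\gamma+\log(n+1))$ part. The bounded remainder is controlled by $\tfrac{|\sin(\pi u)|}{\pi u}\le 1$ for $j=0$ and $\tfrac{|\sin(\pi u)|}{\pi(1-u)}=\tfrac{|\sin(\pi(1-u))|}{\pi(1-u)}\le 1$ for $j=1$. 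This route proves an estimate of exactly the claimed shape, but with a cruder absolute constant ($2$ in place of $\tfrac{2}{\pi}\cdot\tfrac32$).

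The main obstacle is therefore not the $\log$ term but the sharp constant $3/2$: bounding the near terms and the tails at their individual worst points overshoots it. To reach the stated value one must retain the factor $\sin(\pi u)$ throughout, replace the two one-sided sums by the integrals $\int_1^{n}\tfrac{\D t}{t\mp u}$, and then combine the resulting boundary piece $-\tfrac{\sin(\pi u)}{\pi}\log(1-u)$ (which stays bounded precisely because $\sin(\pi u)\to0$ as $u\to1$) with the central terms, optimizing the whole expression over $u\in(0,1)$. Extracting exactly $3/2+\gamma$ from this optimization, and making the reduction to $(0,1)$ rigorous by a genuine term comparison of the sum on $(m,m+1)$ against the sum on $(0,1)$ rather than an asymptotic argument, are the two delicate points.
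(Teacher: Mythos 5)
First, a point of comparison: the paper does not prove this lemma at all --- it is quoted verbatim from Stenger's book (p.~142) and used as a black box in Sect.~\ref{sec:proofs}. So your attempt can only be judged on its own completeness, and there it falls short of the stated claim. Your opening reduction is correct: with $u=x/h$ one has $|S(j,h)(x)|=\frac{|\sin\pi u|}{\pi|u-j|}$, the quantity is independent of $h$, and the crude estimate you actually carry out on $(0,1)$ is valid. But that executed argument yields $\frac{2}{\pi}\bigl(\gamma+\log(n+1)\bigr)+2$, and since $2>\frac{3}{\pi}=\frac{2}{\pi}\cdot\frac{3}{2}$, this is strictly weaker than the lemma. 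The two steps you defer --- the rigorous reduction of the supremum to $u\in(0,1)$, and the extraction of the constant $\frac32$ --- are precisely where the content of the lemma lies, and your sketched route for the second (integral comparison plus optimizing a boundary term $-\frac{\sin\pi u}{\pi}\log(1-u)$ over $u$) is not shown to produce exactly $\frac32+\gamma$; as written it is a plan, not a proof.

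Both gaps are closable by elementary means, and more cleanly than your sketch suggests. For the reduction: for $v\in(0,1)$ compare the windows $W_m=\{-n-m,\dots,n-m\}$ of $2n+1$ consecutive integers; passing from $W_m$ to $W_{m-1}=W_m+1$ adds the node $n-m+1$ at distance $|v-(n-m+1)|$ and removes the node $-n-m$ at distance $v+n+m$, and one checks $|v-(n-m+1)|<v+n+m$ for all $m\geq 1$, so each shift toward the center increases the sum; by induction $\sup_u L(u)=\sup_{v\in(0,1)}L(v)$, with no asymptotics needed (this is the ``genuine term comparison'' you correctly anticipated). For the constant: do not replace sums by integrals; instead pair the tails using $\frac{1}{k+v}+\frac{1}{(k+1)-v}=\frac{2k+1}{(k+v)(k+1-v)}\leq\frac{1}{k}+\frac{1}{k+1}$ (valid since $(k+v)(k+1-v)=k(k+1)+v(1-v)\geq k(k+1)$), which gives $\sum_{j=2}^{n}\frac{1}{j-v}+\sum_{k=1}^{n}\frac{1}{k+v}\leq 2H_n-1$ uniformly in $v$, where $H_n=1+\frac12+\cdots+\frac1n$; keep the factor $\sin\pi v$ only on the two central terms, where $\frac{\sin\pi v}{\pi}\bigl(\frac1v+\frac{1}{1-v}\bigr)=\frac{\sin\pi v}{\pi v(1-v)}\leq\frac{4}{\pi}$ by the elementary inequality $\sin\pi v\leq 4v(1-v)$ on $[0,1]$. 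Adding $\frac{2H_n-1}{\pi}$ to $\frac{4}{\pi}$ and using $H_n<\gamma+\log(n+1)$ (since $H_n-\log(n+1)$ increases to $\gamma$, as you noted) gives exactly $\frac{2}{\pi}\bigl\{\frac{3}{2}+\gamma+\log(n+1)\bigr\}$. So your skeleton is sound and your self-diagnosis of the delicate points is accurate, but the proposal as submitted proves a weaker statement than the one in the lemma.
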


\subsection{Proof of Theorem~\ref{thm:SE-Sinc-collocation}}

We need the following lemma, which is similar to Lemma~\ref{lem:hoelder-DE}
in the case of the DE transformation.

\begin{lemma}
\label{lem:hoelder-SE}
For a given function $\tilde{f}$,
let $p$ and $q$ be defined by~\eqref{eq:def-p-and-q}.
Assume that $\tilde{f}$ is analytic in $\psi(\domD_d)$ with $0<d<\pi$.
Furthermore,
assume that there exists positive constants $H$, $\alpha$,
and $\beta$ with $\alpha\leq 1$ and $\beta\leq 1$
such that
\begin{align*}
 |\tilde{f}(z) - q|&\leq H \left|\frac{z}{1+z}\right|^{\alpha},\\
 |\tilde{f}(z) - p|&\leq H |\E^{-z}|^{\beta}
\end{align*}
hold for all $z\in\phi(\domD_d)$.
Then, $f$ defined by~\eqref{eq:Hamada-func}
satisfies the assumptions of Theorem~\ref{thm:SE-Sinc}.
\end{lemma}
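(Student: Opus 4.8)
The plan is to verify the two hypotheses of Theorem~\ref{thm:SE-Sinc} for the function $f$ defined by~\eqref{eq:Hamada-func}: analyticity on $\psi(\domD_d)$ and the pointwise bound~\eqref{eq:f-bound}. Analyticity is immediate, since $\tilde f$ is analytic on $\psi(\domD_d)$ by hypothesis and the subtracted term $\{q + p(\E^z-1)\}/\E^z = q\E^{-z} + p(1-\E^{-z})$ is entire. For the bound I would reuse the algebraic identity already exploited in the proof of Lemma~\ref{lem:hoelder-DE}, namely $f(z) = (\tilde f(z) - p)(1 - \E^{-z}) + (\tilde f(z) - q)\E^{-z}$, which holds verbatim here. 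Applying the two Hölder-type hypotheses $|\tilde f(z) - q| \le H|z/(1+z)|^\alpha$ and $|\tilde f(z) - p| \le H|\E^{-z}|^\beta$ then gives $|f(z)| \le H|\E^{-z}|^\beta|1-\E^{-z}| + H|z/(1+z)|^\alpha|\E^{-z}|$, so the remaining task is to absorb the extra factors $|1-\E^{-z}|$ and $|\E^{-z}|$ into the target form $|z/(1+z)|^\alpha|\E^{-z}|^\beta$.

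To do so I would first establish the SE-transformation counterparts of Lemmas~\ref{lem:rewrite-Lemma422} and~\ref{lem:rewrite-Maki}: namely that (i) $c_1 = \sup_{z\in\psi(\domD_d)}|\E^{-z}|$ and (ii) $c_2 = \sup_{z\in\psi(\domD_d)}|1-\E^{-z}|$ are finite, and (iii) there is a constant $c_3$ with $|1-\E^{-z}| \le c_3\,|z/(1+z)|$ for all $z\in\psi(\domD_d)$. Granting these, the estimate follows exactly as in Lemma~\ref{lem:hoelder-DE}: in the first term I split $|1-\E^{-z}| = |1-\E^{-z}|^\alpha|1-\E^{-z}|^{1-\alpha}$ and use (iii) and (ii), which is legitimate because $\alpha\le 1$, to get $|1-\E^{-z}| \le c_3^\alpha c_2^{1-\alpha}|z/(1+z)|^\alpha$; in the second term I split $|\E^{-z}| = |\E^{-z}|^\beta|\E^{-z}|^{1-\beta}$ and use (i), which is legitimate because $\beta\le 1$, to get $|\E^{-z}| \le c_1^{1-\beta}|\E^{-z}|^\beta$. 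Collecting the two terms yields $|f(z)| \le (Hc_3^\alpha c_2^{1-\alpha} + Hc_1^{1-\beta})|z/(1+z)|^\alpha|\E^{-z}|^\beta$, which is precisely~\eqref{eq:f-bound} with $C_{\ddagger} = Hc_3^\alpha c_2^{1-\alpha} + Hc_1^{1-\beta}$, completing the verification.

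The suprema (i)--(ii) are routine: writing $z=\psi(\zeta)$ turns them into $\sup_\zeta|1/(1+\E^\zeta)|$ and $\sup_\zeta|1/(1+\E^{-\zeta})|$, and by the maximum modulus principle it suffices to minimize on the boundary $\zeta = x\pm\I d$, where $\inf_x|1+\E^{x+\I d}|^2 = \inf_x(1 + 2\E^x\cos d + \E^{2x})$ equals $1$ if $\cos d\ge 0$ and $\sin^2 d$ if $\cos d<0$, hence is strictly positive for every $d<\pi$. The main obstacle is (iii), the SE analogue of Lemma~\ref{lem:rewrite-Maki}. I would prove it by the device used for Lemma~\ref{lem:Maki}: set $Q(z) = (1-\E^{-z})(1+z)/z$, so that (iii) amounts to $\sup_{z\in\psi(\domD_d)}|Q(z)| \le c_3$; observe that $Q$ extends to an entire function (the singularity of $(1-\E^{-z})/z$ at $z=0$ is removable) and that $Q(\psi(\zeta))$ is analytic and bounded on $\domD_d$, so by maximum modulus its supremum is attained on $\zeta = x\pm\I d$; then bound $|Q|$ through a case split on $|z|$, using the power series of $(1-\E^{-z})/z$ together with the monotonicity of $(\E^u-1)/u$ for small $|z|$, and a lower bound on $\Re z$ together with the boundedness of $|1+z|/|z|$ and of $|\E^{-z}|$ for large $|z|$. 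Carrying the extra $(1+z)$ factor, which is absent in the DE case, through this case split is the only genuinely new ingredient.
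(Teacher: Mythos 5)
Your proposal is correct and follows essentially the same route as the paper: the identity $f(z)=(\tilde f(z)-p)(1-\E^{-z})+(\tilde f(z)-q)\E^{-z}$, the splitting of $|1-\E^{-z}|$ and $|\E^{-z}|$ into powers $\alpha$ and $1-\alpha$ (resp.\ $\beta$ and $1-\beta$), and the absorption of the leftover factors via the suprema of $|\E^{-z}|$ and $|1-\E^{-z}|$ on $\psi(\domD_d)$ together with the key inequality $|1-\E^{-z}|\le c_3|z/(1+z)|$. The only difference is that the paper imports your items (i)--(iii) as Lemmas~\ref{lem:rewrite-Lemma421} and~\ref{lem:rewrite-Machida} (the latter quoted from Okayama and Machida, whose statement is exactly your $Q(z)=(1-\E^{-z})(1+z)/z$ bound), whereas you sketch self-contained proofs; your sketches are workable.
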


To prove this lemma,
we prepare the following four lemmas.

\begin{lemma}[Okayama et al.~{\cite[Lemma~4.21]{Okayama-et-al}}]
\label{lem:Lemma421}
Let $x$ and $y$ be real numbers with $|y|< \pi$,
and let $\zeta=x+\I y$. Then,
\begin{align*}
 \left|\frac{1}{1+\E^{\zeta}}\right|
&\leq \frac{1}{(1+\E^{x})\cos(y/2)},\\
 \left|\frac{1}{1+\E^{-\zeta}}\right|
&\leq \frac{1}{(1+\E^{-x})\cos(y/2)}.
\end{align*}
\end{lemma}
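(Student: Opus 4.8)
The plan is to prove both inequalities by passing to moduli squared, where each reduces to an elementary polynomial inequality in $\E^x$ and $\cos y$. Since $|y|<\pi$ gives $\cos(y/2)>0$, the first bound $|1/(1+\E^{\zeta})|\leq 1/\{(1+\E^x)\cos(y/2)\}$ is equivalent to the statement $|1+\E^{\zeta}|\geq(1+\E^x)\cos(y/2)$, and squaring this is harmless because both sides are nonnegative.

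First I would write $\E^{\zeta}=\E^x(\cos y+\I\sin y)$ and compute
\[
|1+\E^{\zeta}|^2=(1+\E^x\cos y)^2+(\E^x\sin y)^2=1+2\E^x\cos y+\E^{2x}.
\]
Applying the half-angle identity $\cos^2(y/2)=(1+\cos y)/2$, the squared target inequality $|1+\E^{\zeta}|^2\geq(1+\E^x)^2\cos^2(y/2)$ becomes, after multiplying through by $2$,
\[
2+4\E^x\cos y+2\E^{2x}\geq(1+\E^x)^2(1+\cos y).
\]
The crux is to expand and collect terms: writing $u=\E^x$ and $c=\cos y$, the difference of the two sides factors as $(1-c)(1-u)^2$. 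Because $|y|<\pi$ forces $c=\cos y\leq 1$, we have $1-c\geq 0$, while $(1-u)^2\geq 0$ trivially, so the inequality holds. This factorization is the only step requiring any care, and even it is a routine algebraic manipulation once the modulus squared is in hand; I expect no genuine obstacle beyond spotting this grouping.

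For the second inequality I would avoid repeating the computation by exploiting the symmetry $\zeta\mapsto-\zeta$. Replacing $x$ by $-x$ and $y$ by $-y$ preserves the hypothesis $|y|<\pi$ and leaves $\cos(y/2)$ unchanged, since cosine is even; applying the already-established first inequality to $-\zeta$ then yields $|1/(1+\E^{-\zeta})|\leq 1/\{(1+\E^{-x})\cos(y/2)\}$ immediately. Thus both bounds follow from the single factorization $(1-c)(1-u)^2\geq 0$.
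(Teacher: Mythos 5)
Your proof is correct. The paper itself gives no proof of this lemma---it is quoted verbatim from an external reference (Okayama et al., Lemma~4.21)---so there is no in-paper argument to compare against; your derivation supplies a complete, self-contained justification. The key computation checks out: with $u=\E^{x}$ and $c=\cos y$ one has $|1+\E^{\zeta}|^{2}=1+2uc+u^{2}$, and the difference $2(1+2uc+u^{2})-(1+u)^{2}(1+c)$ indeed factors as $(1-c)(1-u)^{2}\geq 0$, which together with $\cos(y/2)>0$ for $|y|<\pi$ gives the first bound. The reduction of the second inequality to the first via $\zeta\mapsto-\zeta$ and the evenness of $\cos(y/2)$ is also valid.
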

\begin{lemma}
\label{lem:rewrite-Lemma421}
Let $d$ be a positive constant with $d<\pi$.
Then,
\begin{align*}
 \sup_{z\in\psi(\domD_d)}\left|\E^{-z}\right|
&\leq \frac{1}{\cos(d/2)},\\
 \sup_{z\in\psi(\domD_d)}\left|1 - \E^{-z}\right|
&\leq \frac{1}{\cos(d/2)}.
\end{align*}
\end{lemma}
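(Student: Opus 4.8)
The plan is to mirror the proof of Lemma~\ref{lem:rewrite-Lemma422} (the DE counterpart), reducing both suprema to expressions to which Lemma~\ref{lem:Lemma421} applies directly. First I would substitute $z=\psi(\zeta)=\log(1+\E^{\zeta})$, which converts each supremum over $z\in\psi(\domD_d)$ into a supremum over $\zeta\in\domD_d$. Using the explicit form of $\psi$, one computes
\[
 \E^{-z}=\E^{-\log(1+\E^{\zeta})}=\frac{1}{1+\E^{\zeta}},\qquad
 1-\E^{-z}=\frac{\E^{\zeta}}{1+\E^{\zeta}}=\frac{1}{1+\E^{-\zeta}},
\]
so that
\[
 \sup_{z\in\psi(\domD_d)}\left|\E^{-z}\right|=\sup_{\zeta\in\domD_d}\left|\frac{1}{1+\E^{\zeta}}\right|,
 \qquad
 \sup_{z\in\psi(\domD_d)}\left|1-\E^{-z}\right|=\sup_{\zeta\in\domD_d}\left|\frac{1}{1+\E^{-\zeta}}\right|.
\]

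Next I would apply Lemma~\ref{lem:Lemma421} to each right-hand side. Writing $\zeta=x+\I y$ with $|y|<d<\pi$, Lemma~\ref{lem:Lemma421} yields $\left|1/(1+\E^{\zeta})\right|\leq 1/\{(1+\E^{x})\cos(y/2)\}$ and $\left|1/(1+\E^{-\zeta})\right|\leq 1/\{(1+\E^{-x})\cos(y/2)\}$. Since $1+\E^{x}\geq 1$ and $1+\E^{-x}\geq 1$ for every real $x$, both denominators lose their $x$-dependence, and each bound simplifies to $1/\cos(y/2)$.

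Finally, I would take the supremum over $\zeta\in\domD_d$, that is, over $|y|<d$. Because $d<\pi$ forces $d/2<\pi/2$, the map $y\mapsto\cos(y/2)$ is positive and even and decreasing in $|y|$ on $[0,d]$; hence $\cos(y/2)\geq\cos(d/2)>0$ and $1/\cos(y/2)\leq 1/\cos(d/2)$ throughout the strip, which gives both claimed bounds. There is essentially no genuine obstacle here, since the argument is the exact SE analogue of Lemma~\ref{lem:rewrite-Lemma422}; the only point requiring care is to note that the hypothesis $d<\pi$ (rather than $d<\pi/2$ as in the DE setting) is precisely what keeps $\cos(d/2)$ strictly positive, so that the right-hand side is finite.
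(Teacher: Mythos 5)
Your proposal is correct and follows exactly the paper's own route: substitute $z=\psi(\zeta)$ to rewrite $\E^{-z}$ and $1-\E^{-z}$ as $1/(1+\E^{\zeta})$ and $1/(1+\E^{-\zeta})$, then apply Lemma~\ref{lem:Lemma421} and take the supremum over $|y|<d$. The only difference is that you spell out the final steps (dropping the $x$-dependence and monotonicity of $\cos(y/2)$) that the paper leaves implicit.
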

\begin{proof}
Applying $z=\psi(\zeta)$, we have
\begin{align*}
 \sup_{z\in\psi(\domD_d)}\left|\E^{-z}\right|
&=\sup_{\zeta\in\domD_d}\left|\E^{-\psi(\zeta)}\right|
=\sup_{\zeta\in\domD_d}\left|\frac{1}{1+\E^{\zeta}}\right|,\\
 \sup_{z\in\psi(\domD_d)}\left|1 - \E^{-z}\right|
&=\sup_{\zeta\in\domD_d}\left|1 - \E^{-\psi(\zeta)}\right|
=\sup_{\zeta\in\domD_d}\left|\frac{1}{1+\E^{-\zeta}}\right|.
\end{align*}
Thus, the claim follows from Lemma~\ref{lem:Lemma421}.
\end{proof}

\begin{lemma}[Okayama and Machida~{\cite[Lemma 7]{OkayamaMachida}}]
\label{lem:Machida}
Let $d$ be a positive constant with $d<\pi$. Then,
\[
\sup_{\zeta\in\domD_d} \left|
\frac{1+\log(1+\E^{\zeta})}{\log(1+\E^{\zeta})}\cdot\frac{1}{1+\E^{-\zeta}}
\right|
\leq \frac{1 + \log(1 + \tilde{c}_d)}{\log(1 + \tilde{c}_d)}\tilde{c}_d,
\]
where $\tilde{c}_d$ is a constant defined by
\begin{equation}
 \tilde{c}_d = 1 + \frac{1}{\cos(d/2)}.
\label{eq:tilde_c_d}
\end{equation}
\end{lemma}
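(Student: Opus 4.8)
The plan is to follow the same route as the proof of Lemma~\ref{lem:Maki}, transplanting it from the DE setting to the SE setting. First I would pass to the variable $z=\log(1+\E^{\zeta})=\psi(\zeta)$ and record the algebraic identity $1/(1+\E^{-\zeta})=1-\E^{-z}$ (which comes from $\E^{z}=1+\E^{\zeta}$). This turns the quantity inside the supremum into $|Q(z)|$, where
\[
 Q(\xi)=\frac{1+\xi}{\xi}\,(1-\E^{-\xi})=(1+\xi)\,P(\xi),\qquad P(\xi)=\frac{1-\E^{-\xi}}{\xi}.
\]
Since $1+\E^{\zeta}$ stays off the negative real axis for $|\Im\zeta|<d<\pi$, the map $\zeta\mapsto Q(\log(1+\E^{\zeta}))$ is analytic on $\domD_d$ (the apparent singularity at $\xi=0$ is removable and is approached only as $\Re\zeta\to-\infty$) and bounded, so by the maximum modulus principle its supremum is attained on the boundary $\zeta=x\pm\I d$. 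I would treat $\zeta=x+\I d$, the case $\zeta=x-\I d$ being symmetric, and abbreviate $\xi=\log(1+\E^{x+\I d})$ and $\tilde{\gamma}=-\log(\cos(d/2))$, so that $\E^{\tilde{\gamma}}=1/\cos(d/2)$ and $\tilde{c}_d=1+\E^{\tilde{\gamma}}$ by~\eqref{eq:tilde_c_d}.

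Next I would split into the two cases a)~$|\xi|\le\log(1+\tilde{c}_d)$ and b)~$|\xi|>\log(1+\tilde{c}_d)$, exactly as in Lemma~\ref{lem:Maki}, and show in each that $|Q(\xi)|\le\tilde{c}_d\,(1+\log(1+\tilde{c}_d))/\log(1+\tilde{c}_d)$. In case a) I would use the power series $|P(\xi)|\le\sum_{k=1}^{\infty}|\xi|^{k-1}/k!=(\E^{|\xi|}-1)/|\xi|$, whose right-hand side increases with $|\xi|$; since $1+|\xi|$ also increases, the estimate $|Q(\xi)|\le(1+|\xi|)\,(\E^{|\xi|}-1)/|\xi|$ is increasing in $|\xi|$, and evaluating at $|\xi|=\log(1+\tilde{c}_d)$ together with $\E^{\log(1+\tilde{c}_d)}-1=\tilde{c}_d$ yields the claimed bound.

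In case b) I would first obtain the lower bound $\Re\xi=\log|1+\E^{x+\I d}|\ge\log\cos(d/2)=-\tilde{\gamma}$ from Lemma~\ref{lem:Lemma421} (which gives $|1+\E^{\zeta}|\ge(1+\E^{x})\cos(d/2)\ge\cos(d/2)$). Hence $|1-\E^{-\xi}|\le1+\E^{-\Re\xi}\le1+\E^{\tilde{\gamma}}=\tilde{c}_d$, and writing $Q(\xi)=(1+1/\xi)(1-\E^{-\xi})$ with $|1+1/\xi|\le1+1/|\xi|\le1+1/\log(1+\tilde{c}_d)$ gives the same bound $\tilde{c}_d\,(1+\log(1+\tilde{c}_d))/\log(1+\tilde{c}_d)$.

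The main obstacle I anticipate is not any single estimate but the bookkeeping that makes both cases land on the identical constant: the threshold $\log(1+\tilde{c}_d)$ is chosen precisely so that the increasing bound from case a) and the decreasing-in-$|\xi|$ bound from case b) meet there, which is what pins down the constant in~\eqref{eq:tilde_c_d}. A secondary point requiring care is the rigorous justification of the maximum modulus reduction on the unbounded strip $\domD_d$; this rests on the boundedness of $\zeta\mapsto Q(\log(1+\E^{\zeta}))$ (its finite limits as $\Re\zeta\to\pm\infty$) and, strictly speaking, a Phragm\'en--Lindel\"of argument, although I would invoke it in the same brief manner as in the proof of Lemma~\ref{lem:Maki}.
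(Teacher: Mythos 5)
Your argument is correct, and it is essentially the intended one: the paper states Lemma~\ref{lem:Machida} as a cited result (Okayama and Machida, Lemma~7) without reproducing a proof, and your proof is the direct SE-transplant of the paper's own proof of the analogous Lemma~\ref{lem:Maki} --- the same substitution $\xi=\log(1+\E^{\zeta})$ with $1/(1+\E^{-\zeta})=1-\E^{-\xi}$, the same two-case split at $|\xi|=\log(1+\tilde{c}_d)$, the same power-series bound in case a) and the same use of Lemma~\ref{lem:Lemma421} to get $\Re\xi\geq\log\cos(d/2)$ in case b), with the extra factor $|1+1/\xi|\leq 1+1/|\xi|$ handled by the evident monotonicity so that both cases meet at the same constant.
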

\begin{lemma}
\label{lem:rewrite-Machida}
Let $d$ be a positive constant with $d<\pi$. Then,
it holds for $z\in\psi(\domD_d)$ that
\[
 |1 - \E^{-z}|\leq \frac{1 + \log(1 + \tilde{c}_d)}{\log(1 + \tilde{c}_d)}\tilde{c}_d \left|\frac{z}{1+z}\right|,
\]
where $\tilde{c}_d$ is a constant defined by~\eqref{eq:tilde_c_d}.
\end{lemma}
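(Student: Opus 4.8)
The plan is to mimic the one-line reduction used for the DE-case analogue (Lemma~\ref{lem:rewrite-Maki}), which obtained its conclusion by substituting the transformation into the already-established strip bound (Lemma~\ref{lem:Maki}). Here the corresponding ready-made estimate is Lemma~\ref{lem:Machida}, so the whole task reduces to matching the quantity $|1-\E^{-z}|/|z/(1+z)|$ to the function appearing there, via the substitution $z=\psi(\zeta)$.

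Concretely, first I would parametrize the domain: for $z\in\psi(\domD_d)$ write $z=\psi(\zeta)=\log(1+\E^{\zeta})$ with $\zeta\in\domD_d$. A short computation then gives $\E^{-z}=1/(1+\E^{\zeta})$, hence
\[
 1-\E^{-z}=\frac{\E^{\zeta}}{1+\E^{\zeta}}=\frac{1}{1+\E^{-\zeta}},
\]
while directly from $z=\log(1+\E^{\zeta})$ one has $z/(1+z)=\log(1+\E^{\zeta})/\{1+\log(1+\E^{\zeta})\}$. The key step is to form the quotient of these two expressions, which collapses to
\[
 \frac{1-\E^{-z}}{z/(1+z)}
 =\frac{1+\log(1+\E^{\zeta})}{\log(1+\E^{\zeta})}\cdot\frac{1}{1+\E^{-\zeta}},
\]
i.e.\ exactly the function whose modulus is bounded in Lemma~\ref{lem:Machida}. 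Taking absolute values and invoking that bound yields
\[
 |1-\E^{-z}|\leq \frac{1+\log(1+\tilde{c}_d)}{\log(1+\tilde{c}_d)}\tilde{c}_d\left|\frac{z}{1+z}\right|,
\]
which is the assertion.

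Since the genuinely analytic content — the supremum estimate over the strip $\domD_d$ — is already packaged in Lemma~\ref{lem:Machida}, I do not expect a real obstacle; the proof is essentially a change of variables together with the algebraic identity above. The only points needing a little care are the bookkeeping in that identity and observing that the factor $z/(1+z)$ does not vanish on $\psi(\domD_d)$, so that the quotient is legitimately formed before the bound of Lemma~\ref{lem:Machida} is applied. This parallels the treatment of Lemma~\ref{lem:rewrite-Maki} in the DE case.
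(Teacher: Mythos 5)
Your proposal is correct and is essentially the paper's own proof: the paper likewise obtains the claim by putting $z=\log(1+\E^{\zeta})$ into Lemma~\ref{lem:Machida}, and you have merely written out the algebraic identity that makes this substitution work. No issues.
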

\begin{proof}
The claim immediately follows by
putting $z=\log(1 + \E^{\zeta})$ in Lemma~\ref{lem:Machida}.
\end{proof}

Using these lemmas, we prove Lemma~\ref{lem:hoelder-SE} as follows.

\begin{proof}
From~\eqref{eq:Hamada-func},
using $|\tilde{f}(z) - q|\leq H |z/(1+z)|^{\alpha}$
and $|\tilde{f}(z) - p|\leq H |\E^{-z}|^{\beta}$, we have
\[
 |f(z)| = |(\tilde{f}(z) - p)(1 - \E^{-z}) + (\tilde{f}(z) - q)\E^{-z}|
\leq H|\E^{-z}|^{\beta}|1 - \E^{-z}|
 + H\left|\frac{z}{1+z}\right|^{\alpha}|\E^{-z}|.
\]
Using Lemmas~\ref{lem:rewrite-Lemma421} and~\ref{lem:rewrite-Machida},
we have
\begin{align*}
 H|\E^{-z}|^{\beta}|1 - \E^{-z}|
 + H\left|\frac{z}{1+z}\right|^{\alpha}|\E^{-z}|
&=
 H|\E^{-z}|^{\beta}|1 - \E^{-z}|^{\alpha}|1 - \E^{-z}|^{1-\alpha}
  + H\left|\frac{z}{1+z}\right|^{\alpha}|\E^{-z}|^{\beta}|\E^{-z}|^{1 - \beta}\\
&\leq H|\E^{-z}|^{\beta}|1 - \E^{-z}|^{\alpha}\frac{1}{\cos^{1-\alpha}(d/2)}
  + H\left|\frac{z}{1+z}\right|^{\alpha}
|\E^{-z}|^{\beta}\frac{1}{\cos^{1-\beta}(d/2)}\\
&\leq H|\E^{-z}|^{\beta}
 \left(\frac{1 + \log(1 + \tilde{c}_d)}{\log(1 + \tilde{c}_d)}\tilde{c}_d
 \left|\frac{z}{1+z}\right|\right)^{\alpha}\frac{1}{\cos^{1-\alpha}(d/2)}\\
&\quad
  + H\left|\frac{z}{1+z}\right|^{\alpha}
|\E^{-z}|^{\beta}\frac{1}{\cos^{1-\beta}(d/2)}\\
&=H\left\{
\left(
\frac{1 + \log(1 + \tilde{c}_d)}{\log(1 + \tilde{c}_d)}\tilde{c}_d
\right)^{\alpha}\frac{1}{\cos^{1-\alpha}(d/2)}
+\frac{1}{\cos^{1-\beta}(d/2)}
\right\}\left|\frac{z}{1+z}\right|^{\alpha}|\E^{-z}|^{\beta},
\end{align*}
from which the claim follows.
\end{proof}

Thanks to Lemma~\ref{lem:hoelder-SE},
we can apply Theorem~\ref{thm:SE-Sinc-boundary}
to the solution $y_i$ under the assumptions of
Theorem~\ref{thm:SE-Sinc-collocation}.
Based on this idea, we prove
Theorem~\ref{thm:SE-Sinc-collocation} as follows.

\begin{proof}
Let $\mathbd{p}=\lim_{t\to\infty}\mathbd{y}(t)$,
and let $\hat{\mathbd{y}}$ be a function defined by
\[
 \hat{\mathbd{y}}(t)
=\frac{\mathbd{r} + \mathbd{p}(\E^{t} - 1)}{\E^{t}}
+\sum_{j=-M}^N\left\{
\mathbd{y}(\phi(jh)) - \frac{\mathbd{r}+\mathbd{p}\E^{jh}}{1+\E^{jh}}
S(j,h)(\psi^{-1}(t))
\right\},
\]
which is derived by application of~\eqref{eq:general-SE-Sinc}
to the solution $\mathbd{y}(t)$.
Note that the approximate solution $\hat{\mathbd{y}}^{(l)}(t)$
is defined by~\eqref{eq:ynSE-SCM}, and
$\mathbd{p}^{(\psi)}$ satisfies
$\mathbd{p}^{(\psi)} = \lim_{t\to\infty}\mathbd{y}^{(l)}(t)$.
To estimate the error of $\hat{y}_i^{(l)}(t)$,
we insert $\hat{y}_i(t)$ as
\begin{equation}
 |y_i(t) - \hat{y}_i^{(l)}(t)|
\leq |y_i(t) - \hat{y}_i(t)|
+|\hat{y}_i(t) - \hat{y}_i^{(l)}(t)|.
\label{eq:hat_y_i}
\end{equation}
As for the first term,
according to Theorem~\ref{thm:SE-Sinc-boundary},
there exists a positive constant $\tilde{C}_i$ independent of $n$
such that
\begin{equation}
 |y_i(t) - \hat{y}_i(t)|\leq \tilde{C}_i\sqrt{n}\E^{-\sqrt{\pi d \mu n}}
\leq \frac{\tilde{C}_i}{\log 2}\log(n+1)\sqrt{n}\E^{-\sqrt{\pi d \mu n}}
\leq \frac{\tilde{C}}{\log 2}\log(n+1)\sqrt{n}\E^{-\sqrt{\pi d \mu n}},
\label{leq:first-term}
\end{equation}
where $\tilde{C}=\max_{i=1,\,\ldots,\,m}\tilde{C}_i$.
Next, we estimate the second term.
First, it is rewritten as
\begin{align*}
 |\hat{y}_i(t) - \hat{y}^{(l)}_i(t)|
&=\left|
\frac{(p_i - p_i^{(\psi)})(\E^{t} - 1)}{\E^{t}}
+\sum_{j=-M}^N
\left\{y_i(\psi(jh)) - y_i^{(l)}(\psi(jh))\right\}S(j,h)(\psi^{-1}(t))
-\sum_{j=-M}^N
\frac{(p_i - p_i^{(\psi)})\E^{jh}}{1+\E^{jh}}S(j,h)(\psi^{-1}(t))
\right|.
\end{align*}
Noting
\[
 p_i - p^{(\psi)}_i = \lim_{t\to\infty}
\left(y_i(t) - y^{(l)}_i(t)\right),
\]
according to Theorem~\ref{thm:SE-Sinc-Nystroem},
there exist positive constants $C$ and $\hat{C}$ such that
\begin{align*}
|p_i - p_i^{(\psi)}|
=\lim_{t\to\infty}
\left|y_i(t) - y^{(l)}_i(t)\right|
&\leq \sup_{t\in(0,\infty)}
\left|y_i(t) - y^{(l)}_i(t)\right|
\leq \left(C + \hat{C}\|A_{lm}^{-1}\|_{\infty}\right)
\sqrt{n}\E^{-\sqrt{\pi d \mu n}},\\
\left|y_i(\psi(jh)) - y_i^{(l)}(\psi(jh))\right|
&\leq\sup_{t\in(0,\infty)}
\left|y_i(t) - y^{(l)}_i(t)\right|
\leq \left(C + \hat{C}\|A_{lm}^{-1}\|_{\infty}\right)
\sqrt{n}\E^{-\sqrt{\pi d \mu n}}.
\end{align*}
Therefore, we have
\begin{align*}
&\left|
\frac{(p_i - p_i^{(\psi)})(\E^{t} - 1)}{\E^{t}}
+\sum_{j=-M}^N
\left\{y_i(\psi(jh)) - y_i^{(l)}(\psi(jh))\right\}S(j,h)(\psi^{-1}(t))
-\sum_{j=-M}^N
\frac{(p_i - p_i^{(\psi)})\E^{jh}}{1+\E^{jh}}S(j,h)(\psi^{-1}(t))
\right|\\
&\leq
\left(\left|\frac{\E^t - 1}{\E^t}\right|
+\sum_{j=-M}^N|S(j,h)(\psi^{-1}(t))|
+\sum_{j=-M}^N\frac{\E^{jh}}{1+\E^{jh}}|S(j,h)(\psi^{-1}(t))|
\right)\left(C + \hat{C}\|A_{lm}^{-1}\|_{\infty}\right)
\sqrt{n}\E^{-\sqrt{\pi d \mu n}}\\
&\leq
\left(1
+\sum_{j=-M}^N|S(j,h)(\psi^{-1}(t))|
+\sum_{j=-M}^N|S(j,h)(\psi^{-1}(t))|
\right)\left(C + \hat{C}\|A_{lm}^{-1}\|_{\infty}\right)
\sqrt{n}\E^{-\sqrt{\pi d \mu n}}\\
&\leq\left(1
+2\sum_{j=-n}^n|S(j,h)(\psi^{-1}(t))|
\right)\left(C + \hat{C}\|A_{lm}^{-1}\|_{\infty}\right)
\sqrt{n}\E^{-\sqrt{\pi d \mu n}},
\end{align*}
where $n=\max\{M, N\}$ is used in the last inequality,
because of~\eqref{eq:def-MN}.
Furthermore, using Lemma~\ref{lem:Stenger},
we have the final estimate for the second term as
\begin{align}
  |\hat{y}_i(t) - \hat{y}^{(l)}_i(t)|
&\leq \left(
1 + \frac{6+4\gamma + 4\log(n+1)}{\pi}
\right)\left(C + \hat{C}\|A_{lm}^{-1}\|_{\infty}\right)
\sqrt{n}\E^{-\sqrt{\pi d \mu n}}\nonumber\\
&\leq \left(
\frac{\pi + 6+4\gamma}{\pi\log 2} + \frac{4}{\pi}
\right)\left(C + \hat{C}\|A_{lm}^{-1}\|_{\infty}\right)
\log(n+1)
\sqrt{n}\E^{-\sqrt{\pi d \mu n}}.
\label{leq:second-term}
\end{align}
Combining the estimates~\eqref{leq:first-term} and~\eqref{leq:second-term},
we obtain the claim.
\end{proof}

\subsection{Proof of Theorem~\ref{thm:DE-Sinc-collocation}}

Thanks to Lemma~\ref{lem:hoelder-DE},
we can apply Theorem~\ref{thm:DE-Sinc-boundary}
to the solution $y_i$ under the assumptions of
Theorem~\ref{thm:DE-Sinc-collocation}.
Based on this idea, we prove
Theorem~\ref{thm:DE-Sinc-collocation} as follows.

\begin{proof}
Let $\mathbd{p}=\lim_{t\to\infty}\mathbd{y}(t)$,
and let $\hat{\mathbd{y}}$ be a function defined by
\[
 \hat{\mathbd{y}}(t)
=\frac{\mathbd{r} + \mathbd{p}(\E^{t} - 1)}{\E^{t}}
+\sum_{j=-n}^n\left\{
\mathbd{y}(\phi(jh)) - \frac{\mathbd{r}+\mathbd{p}\E^{\pi\sinh(jh)}}{1+\E^{\pi\sinh(jh)}}
S(j,h)(\phi^{-1}(t))
\right\},
\]
which is derived by application of~\eqref{eq:general-DE-Sinc}
to the solution $\mathbd{y}(t)$.
Note that the approximate solution $\hat{\mathbd{y}}^{(l)}(t)$
is defined by~\eqref{eq:ynDE-SCM}, and
$\mathbd{p}^{(\phi)}$ satisfies
$\mathbd{p}^{(\phi)} = \lim_{t\to\infty}\mathbd{y}^{(l)}(t)$.
To estimate the error of $\hat{y}_i^{(l)}(t)$,
we insert $\hat{y}_i(t)$ as~\eqref{eq:hat_y_i}.
As for the first term of~\eqref{eq:hat_y_i},
according to Theorem~\ref{thm:DE-Sinc-boundary},
there exists a positive constant $\tilde{C}_i$ independent of $n$
such that
\begin{align}
 |y_i(t) - \hat{y}_i(t)|\leq \tilde{C}_i \E^{-\pi d n/\arsinh(dn/\mu)}
&\leq \frac{\tilde{C}_i}{\arsinh(d/\mu)\log 2}\arsinh(dn/\mu)\log(n+1)\E^{-\pi d n/\arsinh(dn/\mu)}\nonumber\\
&\leq \frac{\tilde{C}}{\arsinh(d/\mu)\log 2}\arsinh(dn/\mu)\log(n+1)\E^{-\pi d n/\arsinh(dn/\mu)},
\label{leq:first-term-DE}
\end{align}
where $\tilde{C}=\max_{i=1,\,\ldots,\,m}\tilde{C}_i$.
Next, we estimate the second term of~\eqref{eq:hat_y_i}.
First, it is rewritten as
\begin{align*}
& |\hat{y}_i(t) - \hat{y}^{(l)}_i(t)|\\
&=\left|
\frac{(p_i - p_i^{(\phi)})(\E^{t} - 1)}{\E^{t}}
+\sum_{j=-n}^n
\left\{y_i(\phi(jh)) - y_i^{(l)}(\phi(jh))\right\}S(j,h)(\phi^{-1}(t))
-\sum_{j=-n}^n
\frac{(p_i - p_i^{(\phi)})\E^{\pi\sinh(jh)}}{1+\E^{\pi\sinh(jh)}}S(j,h)(\phi^{-1}(t))
\right|.
\end{align*}
Noting
\[
 p_i - p^{(\phi)}_i = \lim_{t\to\infty}
\left(y_i(t) - y^{(l)}_i(t)\right),
\]
according to Theorem~\ref{thm:DE-Sinc-Nystroem},
there exist positive constants $C$ and $\hat{C}$ such that
\begin{align*}
|p_i - p_i^{(\phi)}|
=\lim_{t\to\infty}
\left|y_i(t) - y^{(l)}_i(t)\right|
&\leq \sup_{t\in(0,\infty)}
\left|y_i(t) - y^{(l)}_i(t)\right|
\leq \left(C + \hat{C}\|B_{lm}^{-1}\|_{\infty}\right)
\arsinh(dn/\mu)\E^{-\pi d n/\arsinh(dn/\mu)},\\
\left|y_i(\phi(jh)) - y_i^{(l)}(\phi(jh))\right|
&\leq\sup_{t\in(0,\infty)}
\left|y_i(t) - y^{(l)}_i(t)\right|
\leq \left(C + \hat{C}\|B_{lm}^{-1}\|_{\infty}\right)
\arsinh(dn/\mu)\E^{-\pi d n/\arsinh(dn/\mu)}.
\end{align*}
Therefore, we have
\begin{align*}
&\left|
\frac{(p_i - p_i^{(\phi)})(\E^{t} - 1)}{\E^{t}}
+\sum_{j=-n}^n
\left\{y_i(\phi(jh)) - y_i^{(l)}(\phi(jh))\right\}S(j,h)(\phi^{-1}(t))
-\sum_{j=-n}^n
\frac{(p_i - p_i^{(\phi)})\E^{\pi\sinh(jh)}}{1+\E^{\pi\sinh(jh)}}S(j,h)(\phi^{-1}(t))
\right|\\
&\leq
\left(\left|\frac{\E^t - 1}{\E^t}\right|
+\sum_{j=-n}^n|S(j,h)(\phi^{-1}(t))|
+\sum_{j=-n}^n\frac{\E^{\pi\sinh(jh)}}{1+\E^{\pi\sinh(jh)}}|S(j,h)(\phi^{-1}(t))|
\right)\left(C + \hat{C}\|B_{lm}^{-1}\|_{\infty}\right)
\arsinh(dn/\mu)\E^{-\pi d n/\arsinh(dn/\mu)}\\
&\leq
\left(1
+\sum_{j=-n}^n|S(j,h)(\phi^{-1}(t))|
+\sum_{j=-n}^n|S(j,h)(\phi^{-1}(t))|
\right)\left(C + \hat{C}\|B_{lm}^{-1}\|_{\infty}\right)
\arsinh(dn/\mu)\E^{-\pi d n/\arsinh(dn/\mu)}\\
&=\left(1
+2\sum_{j=-n}^n|S(j,h)(\phi^{-1}(t))|
\right)\left(C + \hat{C}\|B_{lm}^{-1}\|_{\infty}\right)
\arsinh(dn/\mu)\E^{-\pi d n/\arsinh(dn/\mu)}.
\end{align*}
Furthermore, using Lemma~\ref{lem:Stenger},
we have the final estimate for the second term as
\begin{align}
  |\hat{y}_i(t) - \hat{y}^{(l)}_i(t)|
&\leq \left(
1 + \frac{6+4\gamma + 4\log(n+1)}{\pi}
\right)\left(C + \hat{C}\|B_{lm}^{-1}\|_{\infty}\right)
\arsinh(dn/\mu)\E^{-\pi d n/\arsinh(dn/\mu)}\nonumber\\
&\leq \left(
\frac{\pi + 6+4\gamma}{\pi\log 2} + \frac{4}{\pi}
\right)\left(C + \hat{C}\|B_{lm}^{-1}\|_{\infty}\right)
\log(n+1)
\arsinh(dn/\mu)\E^{-\pi d n/\arsinh(dn/\mu)}.
\label{leq:second-term-DE}
\end{align}
Combining the estimates~\eqref{leq:first-term-DE}
and~\eqref{leq:second-term-DE},
we obtain the claim.
\end{proof}


\bibliography{sinc-coll-initval}

\begin{thebibliography}{13}
\expandafter\ifx\csname natexlab\endcsname\relax\def\natexlab#1{#1}\fi
\providecommand{\url}[1]{\texttt{#1}}
\providecommand{\href}[2]{#2}
\providecommand{\path}[1]{#1}
\providecommand{\DOIprefix}{doi:}
\providecommand{\ArXivprefix}{arXiv:}
\providecommand{\URLprefix}{URL: }
\providecommand{\Pubmedprefix}{pmid:}
\providecommand{\doi}[1]{\href{http://dx.doi.org/#1}{\path{#1}}}
\providecommand{\Pubmed}[1]{\href{pmid:#1}{\path{#1}}}
\providecommand{\bibinfo}[2]{#2}
\ifx\xfnm\relax \def\xfnm[#1]{\unskip,\space#1}\fi
\bibitem[{Hara and Okayama(2017)}]{HaraOkayamaIndef}
\bibinfo{author}{R.~Hara}, \bibinfo{author}{T.~Okayama},
  \bibinfo{title}{Explicit error bound for {Muhammad--Mori's} {SE-Sinc}
  indefinite integration formula over the semi-infinite interval}, in:
  \bibinfo{booktitle}{Proceedings of the 2017 International Symposium on
  Nonlinear Theory and Its Applications}, \bibinfo{year}{2017}, pp.
  \bibinfo{pages}{677--680}.
\bibitem[{Hara and Okayama(2018)}]{HaraOkayamaP}
\bibinfo{author}{R.~Hara}, \bibinfo{author}{T.~Okayama},
  \bibinfo{title}{Improvement of sinc-nystr\"{o}m methods for initial value
  problems}, in: \bibinfo{booktitle}{Proceedings of the 2018 International
  Symposium on Nonlinear Theory and Its Applications}, \bibinfo{year}{2018},
  pp. \bibinfo{pages}{651--654}.
\bibitem[{Hara and Okayama(2019)}]{HaraOkayama}
\bibinfo{author}{R.~Hara}, \bibinfo{author}{T.~Okayama}, \bibinfo{title}{Error
  analyses of {Sinc-Nystr\"{o}m} methods for initial value problems},
  \bibinfo{journal}{Nonlinear Theo.\ Appl., {\rm IEICE}} \bibinfo{volume}{10}
  (\bibinfo{year}{2019}) \bibinfo{pages}{465--484}.
\bibitem[{Nurmuhammad et~al.(2005)Nurmuhammad, Muhammad and Mori}]{Nurmuhammad}
\bibinfo{author}{A.~Nurmuhammad}, \bibinfo{author}{M.~Muhammad},
  \bibinfo{author}{M.~Mori}, \bibinfo{title}{Numerical solution of initial
  value problems based on the double exponential transformation},
  \bibinfo{journal}{Publ. RIMS, {\rm Kyoto Univ.}} \bibinfo{volume}{41}
  (\bibinfo{year}{2005}) \bibinfo{pages}{937--948}.
\bibitem[{Okayama(2018{\natexlab{a}})}]{OkayamaSinc}
\bibinfo{author}{T.~Okayama}, \bibinfo{title}{Error estimates with explicit
  constants for the {Sinc} approximation over infinite intervals},
  \bibinfo{journal}{Appl.\ Math.\ Comput.} \bibinfo{volume}{319}
  (\bibinfo{year}{2018}{\natexlab{a}}) \bibinfo{pages}{125--137}.
\bibitem[{Okayama(2018{\natexlab{b}})}]{okayama18:_theor_sinc}
\bibinfo{author}{T.~Okayama}, \bibinfo{title}{Theoretical analysis of
  {Sinc-collocation} methods and {Sinc-Nystr\"{o}m} methods for systems of
  initial value problems}, \bibinfo{journal}{BIT Numer.\ Math.}
  \bibinfo{volume}{58} (\bibinfo{year}{2018}{\natexlab{b}})
  \bibinfo{pages}{199--220}.
\bibitem[{Okayama and Hamada(2019)}]{OkayamaHamada}
\bibinfo{author}{T.~Okayama}, \bibinfo{author}{R.~Hamada},
  \bibinfo{title}{Modified {SE-Sinc} approximation with boundary treatment over
  the semi-infinite interval and its error bound}, \bibinfo{journal}{JSIAM
  Lett.} \bibinfo{volume}{11} (\bibinfo{year}{2019}) \bibinfo{pages}{5--7}.
\bibitem[{Okayama and Hara(2020)}]{OkayamaHara}
\bibinfo{author}{T.~Okayama}, \bibinfo{author}{R.~Hara},
  \bibinfo{title}{Sinc-collocation methods for exponential decay initial value
  problems}, in: \bibinfo{booktitle}{Proceedings of the 2020 International
  Symposium on Nonlinear Theory and Its Applications}, \bibinfo{year}{2020},
  pp. \bibinfo{pages}{509--512}.
\bibitem[{Okayama and Kawai(2023)}]{OkayamaKawai}
\bibinfo{author}{T.~Okayama}, \bibinfo{author}{Y.~Kawai},
  \bibinfo{title}{Optimal selection formulas of mesh size and truncation
  numbers for the double-exponential formula}, \bibinfo{journal}{JSIAM Lett.}
  \bibinfo{volume}{15} (\bibinfo{year}{2023}) \bibinfo{pages}{81--84}.
\bibitem[{Okayama and Machida(2017)}]{OkayamaMachida}
\bibinfo{author}{T.~Okayama}, \bibinfo{author}{K.~Machida},
  \bibinfo{title}{Error estimate with explicit constants for the trapezoidal
  formula combined with {Muhammad-Mori's} {SE} transformation for the
  semi-infinite interval}, \bibinfo{journal}{JSIAM Lett.} \bibinfo{volume}{9}
  (\bibinfo{year}{2017}) \bibinfo{pages}{45--47}.
\bibitem[{Okayama et~al.(2013)Okayama, Matsuo and Sugihara}]{Okayama-et-al}
\bibinfo{author}{T.~Okayama}, \bibinfo{author}{T.~Matsuo},
  \bibinfo{author}{M.~Sugihara}, \bibinfo{title}{Error estimates with explicit
  constants for {Sinc} approximation, {Sinc} quadrature and {Sinc} indefinite
  integration}, \bibinfo{journal}{Numer.\ Math.} \bibinfo{volume}{124}
  (\bibinfo{year}{2013}) \bibinfo{pages}{361--394}.
\bibitem[{Okayama et~al.(2020)Okayama, Shintaku and Katsuura}]{OkaShinKatsu}
\bibinfo{author}{T.~Okayama}, \bibinfo{author}{Y.~Shintaku},
  \bibinfo{author}{E.~Katsuura}, \bibinfo{title}{New conformal map for the
  {Sinc} approximation for exponentially decaying functions over the
  semi-infinite interval}, \bibinfo{journal}{J. Comput.\ Appl.\ Math.}
  \bibinfo{volume}{373} (\bibinfo{year}{2020}) \bibinfo{pages}{112358}.
\bibitem[{Stenger(1993)}]{stenger93:_numer}
\bibinfo{author}{F.~Stenger}, \bibinfo{title}{Numerical Methods Based on Sinc
  and Analytic Functions}, \bibinfo{publisher}{Springer-Verlag},
  \bibinfo{address}{New York}, \bibinfo{year}{1993}.

\end{thebibliography}

\end{document}